\newtheorem{theorem}{Theorem}[section]
\newtheorem*{theorem*}{Theorem}
\newtheorem*{acknowledgement*}{Acknowledgement}
\newtheorem*{definition*}{Definition}
\newtheorem{claim}[theorem]{Claim}
\newtheorem{corollary}[theorem]{Corollary}
\newtheorem{definition}[theorem]{Definition}
\newtheorem{lemma}[theorem]{Lemma}
\newtheorem{proposition}[theorem]{Proposition}
\newtheorem{remark}[theorem]{Remark}
\numberwithin{equation}{section}
\newcommand{\RR}[0]{\mathbb{R}}
\newcommand{\pd}[2]{\frac{\partial #1}{\partial#2}}
\newcommand{\pdt}[0]{\frac{\partial}{\partial t}}
\newcommand{\mc}[1]{{\mathcal{#1}}}
\newcommand{\gt}[0]{\tilde{g}}
\newcommand{\del}[0]{\nabla}
\newcommand{\delt}[0]{\widetilde{\nabla}}
\newcommand{\Gamt}[0]{\widetilde{\Gamma}}
\newcommand{\Rc}[0]{\operatorname{Rc}}
\newcommand{\Rm}[0]{\operatorname{Rm}}
\newcommand{\Rmt}[0]{\widetilde{\operatorname{Rm}}}
\newcommand{\dfn}[0]{\doteqdot}
\newcommand{\Xb}[0]{\mathbf{X}}
\newcommand{\Yb}[0]{\mathbf{Y}}
\newcommand{\Xc}[0]{\mathcal{X}}
\newcommand{\Yc}[0]{\mathcal{Y}}
\newcommand{\pdtau}[0]{\pd{}{\tau}}
\newcommand{\srad}[0]{\mathcal{S}}
\newcommand{\abs}[1]{\left\vert#1\right\vert}
\newcommand{\lam}{\lambda}
\newcommand{\ra}{\rangle}
\newcommand{\la}{\langle}
\newcommand{\rad}{\mathcal R}
\newcommand{\Ec}{\mathcal E}
\newcommand{\mcA}[0]{\mathcal{A}}
\newcommand{\mcS}[0]{\mathcal{S}}
\newcommand{\Id}[0]{\operatorname{Id}}
\newcommand{\length}{\operatorname{length}}
\newcommand{\dist}{d}
\title[Rigidity of asymptotically conical shrinking solitons]{Rigidity of asymptotically conical shrinking gradient Ricci solitons}
\author{Brett Kotschwar}
\address{Arizona State University, Tempe, AZ, 85287}
\email{kotschwar@asu.edu}
\author{Lu Wang}
\address{Johns Hopkins University, Baltimore, MD, 21218}
\email{lwang58@jhu.edu}
\thanks{The first author was supported in part by NSF grant DMS-1160613.  The second author was supported in part by an AMS-Simons Travel Grant.}
\date{March 2013}
\begin{document}
 \begin{abstract}
We show that if two gradient Ricci solitons
are asymptotic along some end of each to the same regular cone $((0, \infty)\times \Sigma, dr^2 + r^2g_{\Sigma})$, 
then the soliton metrics must be isometric on some neighborhoods of infinity of these ends. 
Our theorem imposes no restrictions on the behavior of the metrics off of the ends in question
and in particular does not require their geodesic completeness.  
As an application, we prove that the only complete connected gradient shrinking Ricci soliton asymptotic to 
a rotationally symmetric cone is the Gaussian soliton on $\RR^n$. 
\end{abstract}
\maketitle

\section{Introduction}
In this paper, by a \emph{shrinking (gradient) Ricci soliton structure}, we will mean a triple $(M, g, f)$ 
consisting of a smooth manifold $M$, a Riemannian metric $g$, and a smooth function $f$ satisfying the equations 
\begin{equation}\label{eq:shrinker}
  \Rc(g) + \nabla\nabla f= \frac{1}{2} g \quad\mbox{and}\quad   R + |\nabla f|^2 = f
\end{equation}
on $M$.  Since $\nabla (R + |\nabla f|^2 - f) \equiv 0$ whenever $g$ and $f$ satisfy the first equation, the second equation
 is merely a convenient normalization   
and can be achieved by adding an appropriate constant to $f$ on every connected component of $M$.  When the potential
is well-known or can be determined from context, we often will refer simply to the metric $g$ as the soliton (or the \emph{shrinker}) on $M$.

Beyond their intrinsic interest as generalizations of positive Einstein metrics,
shrinking solitons occupy a prominent place in the analysis of singularities of the Ricci flow
\begin{equation}\label{eq:rf}
 \pdt g(t) = -2\Rc(g(t)),
\end{equation}
where they correspond to shrinking \emph{self-similar solutions} -- the fixed points of the equation 
modulo the actions of
$\operatorname{Diff}(M)$ and $\RR_{+}$ on the space of metrics on $M$. 
They are the critical cases in Perelman's entropy monotonicity formula and an important class of ancient solutions, 
arising frequently in applications as limits of rescalings of solutions to \eqref{eq:rf} 
about developing singularities.

It is a fundamental problem to extend the classification of shrinking solitons, which, at present, is
only fully complete in dimensions two and three. Hamilton \cite{HamiltonSurfaces} proved
that the only complete nonflat two-dimensional shrinking solitons are the standard round metrics on $S^2$ and $\RR P^2$, and, with a combination of results from his later paper \cite{HamiltonSingularities}
and the work of Ivey \cite{Ivey3DSolitons}, Perelman \cite{Perelman1}, Ni-Wallach \cite{NiWallach}, and Cao-Chen-Zhu \cite{CaoChenZhu},
it follows that the only nonflat complete three-dimensional examples 
are quotients of either  
the standard round or standard cylindrical metrics on $S^3$ and $\RR \times S^2$, respectively. 

In higher dimensions, there are a number of partial classifications for solitons satisfying certain auxiliary (and typically pointwise)
conditions on the curvature tensor. For example, Naber \cite{Naber4D} has shown that a four-dimensional complete noncompact nonflat shrinker 
of bounded nonnegative curvature
operator must be a quotient of the standard solitons on $\RR \times S^3$ or $\RR^2 \times S^2$, and the theorem
of B\"ohm-Wilking \cite{BohmWilking} implies that compact shrinkers with two-positive curvature
operator must be spherical space forms. Also, from \cite{CaoWangZhang}, \cite{EminentiLaNaveMantegazza},
\cite{FernandezLopezGarciaRio}, \cite{MunteanuSesum}, \cite{NiWallach}, \cite{PetersenWylie}, and \cite{ZhangWeyl}, it follows that the only complete
nonflat shrinking solitons of vanishing Weyl tensor (even harmonic Weyl tensor) in dimensions $n \geq 4$ 
are finite quotients of the standard metrics on $S^n$ or $S^{n-1}\times \RR$. 
A further classification, under the still weaker condition of vanishing Bach tensor, can be found in \cite{CaoChen}. We refer the reader to the two surveys \cite{CaoSurvey1}, \cite{CaoSurvey2}
of Cao for a detailed picture of the current state of the art. 

Our specific interest is in complete noncompact shrinking Ricci solitons.  Here, one might optimistically interpret
the sharp estimates now known to hold on the growth of the potential $f$ \cite{CaoZhou} 
and the volume of metric balls (see, e.g., \cite{CaoZhou}, \cite{CarrilloNi},  \cite{MunteanuWang}) as indicators
of an enforcement of some broader principle of asymptotic rigidity, however, the catalog of nontrivial examples is still exceedingly slim.
  Excluding products and otherwise locally reducible metrics, 
to the authors' knowledge, the only complete noncompact examples in the literature belong either to the family 
of K\"ahler-Ricci solitons on complex line bundles
constructed by Feldman-Ilmanen-Knopf \cite{FeldmanIlmanenKnopf} or to those of their generalizations 
in Dancer-Wang \cite{DancerWang} (see also \cite{Yang}). Both of these families possess conical
structures at infinity, and it is their example which motivates the investigation of the
rigidity of such asymptotic structures in this paper.
We approach this as a question of uniqueness: \emph{if two
gradient shrinking solitons are asymptotic to the same cone along some end of each, must they be isometric on some neighborhoods
of infinity on those ends?} 

\subsection{Asymptotically conical shrinking Ricci solitons}

We now make precise the sense in which we will understand a soliton to be asymptotic to a cone. 
First let us make a preliminary definition and fix some notation. By an \emph{end} of $M$, we will mean a connected unbounded component $V$ of $M\setminus \Omega$ for some
compact $\Omega \subset M$.  We will denote by $((0,\infty)\times\Sigma, g_c)$ a regular (i.e., Euclidean) cone, where 
$g_c=dr^2+r^2g_\Sigma$ and 
$(\Sigma, g_{\Sigma})$ is a closed $(n-1)$-dimensional Riemannian manifold, and write $E_{\rad} \dfn (\rad,\infty)\times\Sigma$
for $\rad \geq 0$. Finally, for $\lambda > 0$, we
define the \emph{dilation by $\lam$} to be the map $\rho_\lam:E_0\to E_0$ given by $\rho_\lam(r,\sigma)\dfn (\lam r, \sigma)$.

\begin{definition}
\label{def:asymcone} Let $V$ be an end of $M$.
We say that $(M, g)$ is \emph{asymptotic to the regular cone $(E_0,g_c)$ along $V$}
if, for some $\rad > 0$, there is a diffeomorphism $\Phi:E_\rad\to V$ such that $\lam^{-2}\rho_\lam^*\Phi^\ast g\to g_c$
as $\lam \to \infty$
in $C^2_{\emph{loc}}(E_0, g_c)$.
We will say that the soliton $(M, g, f)$ is asymptotic to $(E_0, g_c)$ along $V$ if $(M, g)$ is. 
\end{definition}
Our main result is the following theorem. Note that neither $(\bar{M}, \bar{g})$ nor $(\hat{M}, \hat{g})$
is assumed to be complete, and no restriction is made on the topology or geometry of $(\bar{M}, \bar{g})$ and 
$(\hat{M}, \hat{g})$ off of the ends in question.
\begin{theorem}
\label{thm:unique}
Suppose that $(\bar{M}, \bar{g},\bar{f})$ and $(\hat{M},\hat{g},\hat{f})$ 
are shrinking gradient Ricci solitons that are asymptotic 
to the regular cone $(E_0, g_c)$ along the ends $\bar{V}\subset \bar{M}$ and $\hat{V}\subset\hat{M}$,
respectively. Then there exist ends $\bar{W}\subset \bar{V}$ and $\hat{W}\subset \hat{V}$
and a diffeomorphism $\Psi:\bar{W}\to\hat{W}$ such that 
$\Psi^\ast\hat{g}=\bar{g}$.
\end{theorem}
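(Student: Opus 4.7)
The plan is to lift the comparison of the two solitons to a uniqueness problem for Ricci flow and apply a Carleman-estimate argument tailored to the conical setting. Each shrinker $(M,g,f)$ generates a canonical self-similar solution to $\pdt g = -2\Rc(g)$ on $M\times(-\infty, 0)$: if $\phi_t$ is the family of diffeomorphisms with $\phi_{-1}=\Id$ generated by the time-dependent vector field $X_t = -\nabla f/t$, then $g(t)\dfn -t\,\phi_t^{\ast} g$ solves the Ricci flow with $g(-1) = g$. Applied to our two shrinkers, this produces Ricci flow solutions $\bar{g}(t)$, $\hat{g}(t)$ on the ends $\bar{V}$, $\hat{V}$ for all $t\in[-1, 0)$; on an asymptotically conical shrinker, $\nabla f$ is asymptotically radial and outward-pointing, so the flow of $X_t$ remains within the end throughout this interval.

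Pulling back by the asymptotic diffeomorphisms yields Ricci flows $\bar{G}(t)\dfn\bar{\Phi}^{\ast}\bar{g}(t)$ and $\hat{G}(t)\dfn\hat{\Phi}^{\ast}\hat{g}(t)$ on a common cone domain $E_{\rad}\times[-1, 0)$ (after possibly enlarging $\rad$). The self-similarity identifies $\Phi^{\ast}g(t)$, up to a time-dependent reparameterization, with the parabolic rescaling $|t|\,\rho_{|t|^{-1/2}}^{\ast}\Phi^{\ast}g$, so the asymptotic conicity hypothesis in Definition~\ref{def:asymcone} translates into $\bar{G}(t), \hat{G}(t) \to g_c$ in $C^{2}_{\text{loc}}(E_0)$ as $t\to 0^{-}$. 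The theorem thus reduces to the following uniqueness statement: two Ricci flows on a cone end, both converging smoothly to $g_c$ as $t\to 0^{-}$, must coincide on some subend throughout $t\in[-1, 0)$.

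To establish this reduced uniqueness, I would apply the Ricci--DeTurck trick relative to the fixed background $g_c$, converting each flow into a solution of a strictly parabolic tensor equation, and then let $w$ denote the difference of the two gauged flows. Then $w$ satisfies a linear parabolic system
\begin{equation*}
(\pdt - \Delta_{g_c})w = A_1\ast\nabla w + A_0\ast w,
\end{equation*}
with coefficients controlled by $g_c$ plus small perturbations, and with $w$ vanishing to infinite order as $t\to 0^{-}$. Uniqueness from this singular initial condition should follow from a weighted $L^2$ Carleman estimate of the form
\begin{equation*}
\int_{-1}^{0}\!\int_{E_{\rad'}} e^{2\varphi}\bigl(|\pdt w|^{2}+|\nabla^{2}w|^{2}\bigr)\,dV\,dt \leq C\int_{-1}^{0}\!\int_{E_{\rad'}} e^{2\varphi}\bigl|(\pdt - \Delta)w\bigr|^{2}\,dV\,dt,
\end{equation*}
with a weight $\varphi = \varphi(r, t)$ chosen to blow up as $t\to 0^{-}$ (enforcing the initial condition) and to grow in $r$ rapidly enough to absorb the lower-order coefficients at infinity. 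Having forced $w\equiv 0$ on some $E_{\rad''}\times[-1, 0)$, evaluation at $t=-1$ and composition of the DeTurck gauge diffeomorphism with $\bar{\Phi}$ and $\hat{\Phi}^{-1}$ produces the required isometry $\Psi$.

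The main obstacle is the design and proof of the Carleman estimate itself in the noncompact conical setting. Unlike in classical backward uniqueness arguments on compact manifolds, one cannot use spatially compactly supported cutoffs, so the weight $\varphi$ must do double duty in suppressing contributions at $r\to\infty$ and enforcing strong vanishing as $t\to 0^{-}$, while remaining compatible with the parabolic scaling of the cone. A related technical point is producing sufficient \emph{a priori} decay of $\bar{g} - g_c$, $\hat{g} - g_c$ and their covariant derivatives on the ends to guarantee convergence of the weighted integrals; this I expect to obtain from the asymptotic conicity hypothesis combined with parabolic smoothing (Shi-type estimates) applied to the self-similar flows.
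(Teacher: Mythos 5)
The first half of your proposal --- constructing the self-similar (backwards) Ricci flow from each shrinker, showing it interpolates between a copy of the soliton metric and the cone metric $g_c$ at the singular time, and thereby reducing Theorem~\ref{thm:unique} to a backwards-uniqueness statement for the Ricci flow near $t=0$ --- is exactly the reduction carried out in Section~\ref{sec:reduction} of the paper, and is sound (modulo details such as verifying the two limit metrics are isometric to $g_c$ on a common end, which the paper handles via Lemma~\ref{lem:uniquenessofasymcones}).

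The fatal gap is the proposed use of the Ricci--DeTurck trick to break gauge invariance and reduce to a strictly parabolic system for $w = \bar{G} - \hat{G}$. This step does not work for \emph{backwards}-uniqueness problems, and the paper explicitly warns against it in Section~1.2.2, citing the discussion in \cite{KotschwarBackwardsUniqueness}. The difficulty is the following. The DeTurck gauge diffeomorphisms $\phi_t$ and $\hat\phi_t$ for the two flows are obtained by integrating vector fields that depend on the respective solutions; if you normalize them to be the identity at some initial time $t_0 < 0$ and integrate forward, then $\phi_0$ and $\hat\phi_0$ are in general \emph{distinct} diffeomorphisms, neither equal to the identity, and so the gauged solutions $\bar{G}(0) = \phi_0^*g_c$ and $\hat{G}(0) = \hat\phi_0^*g_c$ need not agree at $t=0$ --- your $w$ does not vanish to infinite order (or at all) at the terminal time, and the backward uniqueness argument cannot be started. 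If instead you normalize the gauges to be the identity at $t=0$ and try to integrate backward, you are attempting to solve the (forward-parabolic) harmonic map heat flow backwards in time, which is ill-posed. And if you omit the gauge diffeomorphisms altogether, the linearization of $-2\Rc$ is only weakly elliptic, so $(\partial_t - \Delta_{g_c})w$ contains genuine second-order terms in $w$ that cannot be absorbed into the right-hand side $A_1\ast\nabla w + A_0\ast w$. In every reading the proposed reduction to a strictly parabolic scalar-type equation fails. The paper circumvents this by not fixing a gauge at all: it prolongs the system to the coupled ``PDE--ODE'' differential inequalities of Section~\ref{sec:pdeode} for $\Xb = (\Rm-\Rmt)\oplus(\nabla\Rm-\delt\Rmt)$ and $\Yb = (g-\gt)\oplus(\nabla-\delt)\oplus\nabla(\nabla-\delt)$, following Alexakis \cite{Alexakis} and \cite{KotschwarBackwardsUniqueness}, and proves two pairs of Carleman estimates (Sections~\ref{sec:carleman1} and~\ref{sec:carleman2}) adapted to that mixed system rather than one estimate for a single strictly parabolic equation.
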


Together with the local analyticity of Ricci solitons 
\cite{IveyLocalSoliton} and a standard monodromy argument (see, e.g., 
Theorem 3 of \cite{Myers} or Corollary 6.4 of \cite{KobayashiNomizu}), 
Theorem \ref{thm:unique} implies the following global statement. 
\begin{corollary}
\label{cor:rigidity}
Suppose $(\bar{M},\bar{g},\bar{f})$ and $(\hat{M},\hat{g},\hat{f})$
are complete gradient shrinking Ricci solitons, and $\bar{g}_0$ and $\hat{g}_0$
are the metrics induced by $\bar{g}$ and $\hat{g}$ on the universal covers $\bar{M}_0$ and $\hat{M}_0$
of $\bar{M}$ and $\hat{M}$, respectively. Then, if $(\bar{M},\bar{g},\bar{f})$ and $(\hat{M},\hat{g},\hat{f})$ 
are asymptotic to the same regular cone along some end of each, $(\bar{M}_0, \bar{g}_0)$ and $(\hat{M}_0, \hat{g}_0)$
must be isometric.
\end{corollary}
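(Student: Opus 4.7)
The plan is to combine Theorem \ref{thm:unique} with the real-analyticity of Ricci solitons in harmonic coordinates, promoting the local isometry of ends to a global isometry of universal covers by means of the standard monodromy argument on simply connected complete analytic Riemannian manifolds.

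First I would apply Theorem \ref{thm:unique} to $(\bar{M},\bar{g},\bar{f})$ and $(\hat{M},\hat{g},\hat{f})$ to obtain ends $\bar{W}\subset\bar{V}$, $\hat{W}\subset\hat{V}$ together with a diffeomorphism $\Psi:\bar{W}\to\hat{W}$ satisfying $\Psi^\ast\hat{g}=\bar{g}$. Pick a point $\bar{x}\in\bar{W}$, set $\hat{x}=\Psi(\bar{x})$, and choose lifts $\bar{x}_0\in\bar{M}_0$ and $\hat{x}_0\in\hat{M}_0$ under the universal covering maps $p:\bar{M}_0\to\bar{M}$ and $q:\hat{M}_0\to\hat{M}$. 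On a contractible neighborhood $U$ of $\bar{x}$ contained in $\bar{W}$ (and chosen small enough that $\Psi(U)$ lies in an evenly covered neighborhood of $\hat{x}$), the projection $p$ admits a local inverse $s:U\to\bar{M}_0$ with $s(\bar{x})=\bar{x}_0$, and $q$ admits a corresponding local inverse $t$ with $t(\hat{x})=\hat{x}_0$. Composing yields a local isometry $\Psi_0:=t\circ\Psi\circ s^{-1}$ defined on a neighborhood of $\bar{x}_0$ in $(\bar{M}_0,\bar{g}_0)$ taking values in $(\hat{M}_0,\hat{g}_0)$ and sending $\bar{x}_0$ to $\hat{x}_0$.

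The soliton equations \eqref{eq:shrinker} form an elliptic system in harmonic coordinates, so by Ivey's local analyticity theorem \cite{IveyLocalSoliton} the metrics $\bar{g}$ and $\hat{g}$ are real-analytic in suitable charts; this analytic structure lifts to $\bar{M}_0$ and $\hat{M}_0$, which also inherit geodesic completeness from the hypotheses. The classical monodromy principle for analytic Riemannian manifolds (Corollary 6.4 of \cite{KobayashiNomizu}, or Theorem 3 of \cite{Myers}) then allows $\Psi_0$ to be extended by analytic continuation along every path in $\bar{M}_0$; since $\bar{M}_0$ is simply connected, these continuations agree on overlaps, producing a globally defined local isometry $\Psi_0:\bar{M}_0\to\hat{M}_0$. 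Completeness of $\bar{M}_0$ upgrades $\Psi_0$ to a Riemannian covering map onto $\hat{M}_0$ (using completeness of $\hat{M}_0$ to guarantee surjectivity), and simple connectivity of $\hat{M}_0$ forces it to be a diffeomorphism, hence a global isometry. Reversing the roles of $\bar{M}_0$ and $\hat{M}_0$ is unnecessary once this step is complete.

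There is no real obstacle beyond organizing the above: the analytical work has all been done in proving Theorem \ref{thm:unique}, and the lifting step is immediate because the initial open set can be chosen contractible. The only care required is in verifying that the extension by monodromy respects the Riemannian structure, which is routine given the analyticity of both metrics and standard uniqueness of analytic continuation of local isometries.
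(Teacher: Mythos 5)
Your proposal is correct and follows precisely the route the paper sketches: apply Theorem~\ref{thm:unique} to obtain an isometry of ends, invoke the local real-analyticity of Ricci soliton metrics from \cite{IveyLocalSoliton}, and extend via the monodromy theorem for complete simply connected analytic Riemannian manifolds (Theorem 3 of \cite{Myers} / Corollary 6.4 of \cite{KobayashiNomizu}). You have simply filled in the lifting and covering-map details that the paper leaves implicit, and this is done correctly.
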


Theorem \ref{thm:unique} can also be used to rule out the possibility of nontrivial
complete shrinking solitons asymptotic to a rotationally symmetric cone. As we prove in Appendix 
\ref{app:rotsymend},  for each $\alpha \in (0, \infty)$,  there exists a rotationally symmetric 
shrinking gradient Ricci soliton $((0, \infty)\times S^{n-1}, g_{\alpha}, f_{\alpha})$ 
asymptotic to the rotationally symmetric cone $((0, \infty)\times S^{n-1} , dr^2 + \alpha r^2g_{S^{n-1}})$.
By Theorem \ref{thm:unique}, if $(M, g, f)$ is any complete shrinking gradient Ricci soliton asymptotic to the same
cone on some end $V\subset M$, there exists an isometry $\varphi: (V^{\prime}, g) \to (E^{\prime}, g_{\alpha})$
between some ends $V^{\prime} \subset V$ and $E^{\prime}\subset (0, \infty)\times S^{n-1}$.
But $g$ is then rotationally symmetric (and so also locally conformally flat) on $V^{\prime}$.  Appealing to analyticity,
we may then argue in dimensions $n\geq 4$ that the Weyl curvature tensor vanishes identically on $M$. From 
the aforementioned classification theorems in dimensions two and three and the locally conformally flat case, it follows that $(M, g)$ must be flat.
\begin{corollary}
\label{cor:nonexistencerotsym}
A complete connected shrinking gradient Ricci soliton $(M, g, f)$ is asymptotic to a rotationally symmetric cone
$((0, \infty)\times S^{n-1}, dr^2 + \alpha r^2 g_{S^{n-1}})$ along some end $V\subset M$ if and only if $M\approx \RR^n$
and $g$ is flat.
\end{corollary}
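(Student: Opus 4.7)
The plan is to reduce the nontrivial direction, via Theorem \ref{thm:unique} and the local analyticity of Ricci solitons, to the existing classification of complete locally conformally flat gradient shrinking Ricci solitons. The "if" direction is immediate: the Gaussian soliton on $\RR^n$ with potential $|x|^2/4$ is complete and asymptotic to the flat cone over $(S^{n-1}, g_{S^{n-1}})$, which is the rotationally symmetric cone with $\alpha=1$.

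For the converse, suppose $(M,g,f)$ is complete and asymptotic along an end $V$ to $((0,\infty)\times S^{n-1},\, dr^2+\alpha r^2 g_{S^{n-1}})$. First I invoke the rotationally symmetric shrinker $((0,\infty)\times S^{n-1}, g_\alpha, f_\alpha)$ from the appendix, which is asymptotic to the same cone. Applying Theorem \ref{thm:unique} to $(M,g,f)$ and this model produces sub-ends $W\subset V$ and $W'\subset(0,\infty)\times S^{n-1}$ and an isometry $\Psi:W\to W'$ with $\Psi^\ast g_\alpha = g$. Through $\Psi$, the metric $g$ on $W$ is identified with a warped product $dr^2 + \phi(r)^2 g_{S^{n-1}}$, and any such rotationally symmetric metric with constant-curvature fibre is locally conformally flat. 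Thus the Weyl tensor of $g$ vanishes on the nonempty open set $W$, and by the real-analyticity of soliton metrics in harmonic coordinates \cite{IveyLocalSoliton} together with the connectedness of $M$, it vanishes identically on $M$.

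Next I apply the existing classification of complete LCF shrinkers. In dimensions $n\ge 4$, the combined results cited in the introduction (\cite{NiWallach}, \cite{PetersenWylie}, \cite{CaoWangZhang}, \cite{ZhangWeyl}, and related works) force $(M,g)$ to be a finite quotient of $(\RR^n,g_{\text{Eucl}})$, of the round sphere, or of the round cylinder $\RR\times S^{n-1}$; in dimensions 2 and 3 the same trichotomy follows from the classifications of Hamilton, Perelman, Ivey, Ni--Wallach, and Cao--Chen--Zhu recalled above. Sphere quotients are compact and possess no ends, and a cylinder quotient has bounded-diameter cross-sections and at most linear volume growth on its ends, neither of which is compatible with Definition \ref{def:asymcone} when the link is $S^{n-1}$. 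Hence $g$ must be flat, and the only complete flat shrinker admitting an end of cone-type topology $(\rad,\infty)\times S^{n-1}$ is $(\RR^n,g_{\text{Eucl}})$ itself (this also forces $\alpha=1$).

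The principal technical check, and the one genuine obstacle beyond assembling these ingredients, is verifying that no nontrivial quotient of the flat cylinder or of Euclidean space can satisfy the $C^2_{\text{loc}}$-asymptotic condition of Definition \ref{def:asymcone} with the prescribed rotationally symmetric cone. This should reduce to a direct comparison of asymptotic volume ratios, linear growth, and diameters of geodesic spheres with those of the model cone, combined, for the flat case with $n\ge 3$, with a standard monodromy argument using the simple-connectedness of the end $(\rad,\infty)\times S^{n-1}$ of the cone.
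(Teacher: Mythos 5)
Your argument follows the same route as the paper's: construct the rotationally symmetric model soliton of Appendix~\ref{app:rotsymend}, invoke Theorem~\ref{thm:unique} to obtain an isometry of ends, deduce that $g$ is rotationally symmetric and hence locally conformally flat on a nonempty open set, propagate vanishing of the Weyl tensor by analyticity and connectedness, and appeal to the classification of complete locally conformally flat shrinkers (and the low-dimensional classifications for $n\le 3$). The paper states this more tersely; you flesh out the elimination of the spherical and cylindrical quotients (compactness, respectively bounded cross-sections incompatible with cone asymptotics) and of nontrivial flat quotients, which is correct and consistent with what the paper leaves implicit.
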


Corollary \ref{cor:nonexistencerotsym} has some precedent in the category of steady and expanding gradient Ricci solitons.
Brendle \cite{BrendleBryant3D} has proven that 
any three-dimensional nonflat $\kappa$-noncollapsed steady gradient Ricci soliton
must be rotationally symmetric and hence, up to homothety, identical to Bryant's soliton \cite{BryantSoliton}. 
This was also asserted by Perelman \cite{Perelman1},
who further conjectured that Bryant's soliton is the unique complete, 
noncompact, three-dimensional $\kappa$-noncollapsed \emph{ancient} solution to the Ricci flow of bounded positive sectional curvature.  Brendle's approach in \cite{BrendleBryant3D} combines the construction of 
``approximate Killing vector fields'' with a careful blow-down analysis and a Liouville-type theorem for solutions to the 
Lichnerowicz PDE. 
The essential dimension-specific aspects of his argument are, first, that the sectional curvature
of a complete steady three-dimensional soliton is 
(by the Hamilton-Ivey estimate in its local \cite{ChenStrongUniqueness} and global \cite{HamiltonSingularities} forms) necessarily nonnegative and, 
second, that the asymptotic shrinking soliton obtained by parabolic blow-down from  a positively curved $\kappa$-noncollapsed steady soliton 
is known to be a cylinder.  
In a later paper, following the same general outline,  Brendle \cite{BrendleBryantND} extended his theorem
to higher-dimensional steady solitons of positive curvature operator which blow-down similarly to a cylinder.  

Using a modification of this ``approximate Killing vector'' technique, Chodosh \cite{ChodoshExpanders} has proven 
that if a complete expanding gradient Ricci soliton with 
nonnegative sectional curvature is asymptotic  
to a rotationally symmetric cone $((0,\infty)\times S^{n-1}, dr^2 + \alpha r^2 S^{n-1})$ for $\alpha \in (0, 1]$, 
then the soliton must itself be rotationally symmetric.  Where the parabolic blow-down procedure in \cite{BrendleBryant3D},
\cite{BrendleBryantND} is inapplicable in the expanding setting, Chodosh substitutes an argument
based on the elliptic maximum principle and a judicious choice of barrier functions constructed from the potential $f$. 
Arguing along these lines, in their very recent paper \cite{ChodoshFong}, Chodosh-Fong further prove that any K\"ahler-Ricci 
expanding soliton of 
positive holomorphic bisectional curvature asymptotic
to a $\operatorname{U}(n)$-invariant cone must be itself $\operatorname{U}(n)$-invariant and so identical to one of the family
of expanding solitons constructed by Cao \cite{CaoExpander}.

\subsection{Overview of the proof of Theorem \ref{thm:unique}}
Brendle's technique, however, \emph{does not} seem to extend in the same straightforward way to the case of shrinking Ricci solitons.
According to \cite{CarrilloNi}, a complete Ricci shrinker with nonnegative Ricci curvature must have vanishing asymptotic volume ratio
and so cannot be asymptotically conical.  An 
assumption of positive curvature of any kind is therefore undesirable for our purposes, yet, in its absence, it is unclear how  to develop the Liouville-type theorem
needed to pass from approximate to exact Killing vector fields (cf. the concluding comment in \cite{ChodoshFong}).
The positive coefficient of the metric in \eqref{eq:shrinker} also generates a zeroth-order term of uncooperative sign
in the associated Lichnerowicz PDE.
 
We pursue instead a completely different strategy and convert Theorem \ref{thm:unique} -- on its face,
an assertion of unique continuation at infinity for the weakly elliptic system \eqref{eq:shrinker}
-- into an assertion of backwards uniqueness for the weakly parabolic system \eqref{eq:rf}. 
By the same general strategy, the second
author in \cite{Wang} recently obtained an analogous uniqueness result for asymptotically conical self-shrinking solutions to the mean curvature flow.
The key idea can be summarized very succinctly: after appropriate normalizations on the ends $\bar{V}$ and $\hat{V}$, 
\emph{the self-similar solutions to the Ricci flow
associated to the solitons in Theorem \ref{thm:unique} can be made to coincide in finite time with the conical metric $g_c$}.
Thus the problem in Theorem \ref{thm:unique} becomes a clean (if analytically somewhat subtle) problem of backwards uniqueness. We describe this conversion
in greater detail below.

\subsubsection{Self-similar solutions to the Ricci flow.}
Recall that a family $g(t)$, $t\in I$, of metrics on $M$
is said to be a \emph{shrinking self-similar solution} to \eqref{eq:rf} if there is a smooth family of diffeomorphisms $\Psi_t:M\to M$
and a positive decreasing function $c(t)$ defined for $t\in I$ such that 
\begin{equation}\label{eq:rfselfsim}
  g(t) = c(t)\Psi^*_t(g(t_0))
\end{equation} 
for some $t_0\in I$. As is well-known (see, e.g., Lemma 2.4 in \cite{ChowKnopf}),
one can construct a local shrinking self-similar solution from a shrinking gradient Ricci soliton structure $(M, g, f)$ 
in an essentially canonical fashion.  Moreover, when $\nabla f$ is complete as a vector field,
(e.g., as happens when $g$ is complete, according to \cite{ZhangCompleteness}), this construction
produces a globally defined ancient solution to the Ricci flow.  

In the setting of Theorem \ref{thm:unique}, on our (typically incomplete) ends 
$\bar{V}$ and $\hat{V}$, we will obtain solutions $g(t) = -t\Psi_t^*\bar{g}$ and  
$\tilde{g}(t) = -t\tilde{\Psi}_t^*\hat{g}$ defined for $t\in [-1, 0)$  
that satisfy $g(-1) = \bar{g}$ and $\tilde{g}(-1) = \hat{g}$, have uniform quadratic curvature decay, and (as can be seen) converge smoothly
as $t\nearrow 0$ to limit metrics $g(0)$ and $\tilde{g}(0)$.  On one hand, the self-similarity of the solutions for $t\in [-1, 0)$
forces these limit metrics to be conical, on the other (as we will verify, but is at least intuitively plausible), they must also be asymptotic 
to $g_c$ in the sense of Definition 1.1. It follows, then,
that $g(0)$ and $\tilde{g}(0)$ must actually be isometric to the cone $g_c$ on some sufficiently restricted end.
Adjusting $g(t)$ and $\tilde{g}(t)$ by appropriate diffeomorphisms, we can thus arrange that they inhabit the same
end $W$ and agree identically at $t=0$. To conclude that $\bar{g}$ and $\hat{g}$ are isometric on some end, it is then
enough to show that $g(t) = -t\Psi_t^*\bar{g}$ and $\tilde{g}(t) = -t\widetilde{\Psi}_t^*\hat{g}$ agree identically
on $W^{\prime}\subset W$ for $t\in (-\epsilon, 0]$, and this is the backwards uniqueness problem we seek to solve.

\subsubsection{The model Euclidean problem}  A distinctive feature of Theorem \ref{thm:unique} 
(and of the corresponding result, Theorem 1.1, in \cite{Wang})  is that its conclusion is valid
without any restrictions on the soliton structures off of the particular ends $\bar{V}$ and $\hat{V}$. 
The analytic artifact of this flexibility is that we have no control on $g(t)$ and $\tilde{g}(t)$ at the spatial boundary of the end,
and the backwards uniqueness problem described above is considerably more delicate than, e.g., the
global problem considered in \cite{KotschwarBackwardsUniqueness} for complete solutions to \eqref{eq:rf}.

For a model of an attack on this problem, as in \cite{Wang}, 
we can look to the paper of Escauriaza-Seregin-\v{S}ver\'ak \cite{EscauriazaSereginSverak}.
There it is proven that any smooth
function $u$ on $(\RR^n\setminus B_R(0))\times [0, T]$ which satisfies
\begin{equation*}
\left|\partial_t u + \Delta u\right| \leq N\left(|u|+ |\nabla u|\right), \quad
u(x, 0) = 0, \quad\mbox{and}\quad |u(x, t)|\leq Ne^{N|x|^2}, 
\end{equation*}
must vanish identically.  The significance of their result is that it makes no
restriction on the behavior of $u$ on the parabolic boundary of 
$(\RR^n\setminus B_R(0))\times[0, T]$; it was previously known that this particular formulation would settle
a longstanding open question
in the regularity of solutions to the Navier-Stokes equations.  

Since \eqref{eq:rf} is only weakly-parabolic,
there is no direct generalization
of this result which we may apply to our backwards uniqueness problem, nor is there, as there is for the mean curvature flow, 
a convenient means of breaking the gauge-invariance of the equation to reduce the problem
to one for a corresponding strictly parabolic equation. (See, e.g., the first section of \cite{KotschwarBackwardsUniqueness}
for an explanation of the inapplicability of DeTurck's method to backwards-time uniqueness problems.) 
Nevertheless, as in \cite{KotschwarBackwardsUniqueness}, we can embed the problem 
into one for a prolonged ``PDE-ODE'' system of mixed
differential inequalities for which an analog of the above theorem can be shown to hold.
It is worth remarking that the elliptic unique continuation problem implied by Theorem \ref{thm:unique} is itself somewhat nonstandard,
 even neglecting the complications arising from the gauge-degeneracy which the system \eqref{eq:shrinker} shares with \eqref{eq:rf} -- see 
Section 3 of \cite{Wang} for some discussion of the features of the corresponding equation
in the related case of self-shrinking solutions of the mean curvature flow.

\subsubsection{Structure of the paper}

In Section \ref{sec:reduction}, we construct from $\bar{g}$ and $\hat{g}$
the self-similar solutions to the Ricci flow described above and carry out the reduction of
 Theorem \ref{thm:unique} to a specific problem of backwards uniqueness (Theorem \ref{thm:bu}). 
In Section \ref{sec:pdeode}, we convert this backwards uniqueness problem into one for a larger coupled system of mixed differential inequalities
(a ``PDE-ODE'' system). The technical heart of the paper is contained in Sections \ref{sec:carleman1} and \ref{sec:carleman2} where
 we develop two pairs of Carleman inequalities for time-dependent sections of vector bundles on a self-similar Ricci flow background.
We then combine these estimates in Section 
\ref{sec:backwardsuniqueness} to prove Theorem \ref{thm:bu}. We conclude the paper with two technical appendices. 
In Appendix \ref{app:asymcone}, we record some
some elementary consequences of Definition 1.1 and give a proof of a normalization lemma for shrinking solitons
with quadratic curvature decay.  In Appendix \ref{app:rotsymend},  we construct a rotationally symmetric gradient 
shrinking soliton asymptotic to each rotationally symmetric cone $((0, \infty)\times S^{n-1}, dr^2 + \alpha r^2 g_{S^{n-1}})$.
These examples furnish the rotationally symmetric ``competitor'' solitons we need to deduce Corollary \ref{cor:nonexistencerotsym}
from Theorem \ref{thm:unique}.


\section{Reduction to a problem of backwards uniqueness}\label{sec:reduction}

Going forward, as in the statement of Theorem \ref{thm:unique}, $(\Sigma, g_{\Sigma})$ will denote a closed Riemannian $(n-1)$-manifold 
and $g_c = dr^2+ r^2g_{\Sigma}$ a regular conical metric on $E_0 = (0, \infty) \times \Sigma$. 
We will use $r_c: E_0 \to \RR$ to denote the radial distance from the vertex relative to the conical metric $g_c$
(so in coordinates $(r, \sigma)$ on $E_0$, we have $r_c(r, \sigma) = r$) and will use the shorthand
\[ 
E_{\rad} = \{\,x\in E_0\,|\,r_c(x) > \rad\,\}, \quad\mbox{and}\quad \mathcal{E}_{\rad}^T \dfn E_{\rad}\times[0, T].
\]
Our aim in this section is to take the soliton structures $(\bar{M}, \bar{g}, \bar{f})$ and $(\hat{M}, \hat{g}, \hat{f})$
from Theorem \ref{thm:unique} and construct from them self-similar solutions to the backwards Ricci flow
 on $E_{\rad}\times (0, 1]$,
for some sufficiently large $\rad$, which flow smoothly from the cone $g_c$ at the singular time $\tau=0$ to
 isometric copies of (restrictions of) $\bar{g}$ and $\hat{g}$.  This construction converts Theorem \ref{thm:unique} 
into the assertion of parabolic backwards uniqueness
stated in Theorem \ref{thm:bu} below.  
  
\subsection{An asymptotically conical self-similar solution to the Ricci flow}

\begin{proposition}
\label{prop:reduction}
Suppose $(\bar{M}, \bar{g}, \bar{f})$ is a shrinking Ricci 
soliton asymptotic to the regular cone $(E_0, g_c)$ along the end $\bar{V}\subset \bar{M}$. 
Then there exist $K_0$, $N_0$, and $\rad_0> 0$, and a smooth family of maps $\bar{\Psi}_\tau: E_{\rad_0}\to \bar{V}$
defined for $\tau\in (0,1]$ satisfying:
\begin{enumerate}
 \item[(1)] For each $\tau\in (0, 1]$, $\bar{\Psi}_{\tau}$ is a diffeomorphism onto its image and
$\bar{\Psi}_{\tau}(E_{\rad_0})$ is an end of $\bar{V}$.
\item[(2)] The family of metrics $g(x, \tau) \dfn\tau\bar{\Psi}_\tau^*\bar{g}(x)$ 
is a solution to the backwards Ricci flow
\begin{equation}\label{eq:brf}
 \pd{g}{\tau} = 2\Rc(g)
\end{equation}
for $\tau \in (0, 1]$, and extends smoothly as $\tau\searrow 0$ to $g(x, 0) \equiv g_c(x)$ on $E_{\rad_0}$.

\item[(3)] For all $m = 0, 1, 2, \ldots $,
\begin{align}
\label{eq:curvdecay}
\sup_{E_{\rad_0}\times [0,1]} \left(r_c^{m+2}+1\right)\abs{\nabla^{(m)}\Rm(g)} & \le K_0.
\end{align}
Here $|\cdot| = |\cdot|_{g(\tau)}$ and $\nabla = \nabla_{g(\tau)}$ denote the norm and the Levi-Civita connection associated to the metric $g = g(\tau)$.

\item[(4)] If $f$ is the function on $E_{\rad_0}\times (0,1]$ defined by 
$f(\tau)=\bar{\Psi}_\tau^\ast \bar{f}$, then $\tau f$ extends
to a smooth function on all of $\Ec_{\rad_0}^1$ and there $g$ and $\tau f$ together satisfy
\begin{align}
\label{eq:fid0}
&  \lim_{\tau\searrow 0} 4\tau f(x, \tau) =r_c^2(x),\quad r_c^2-\frac{N_0}{r_c^{2}} \le 4 \tau f \le r_c^2 + \frac{N_0}{r_c^{2}}, \quad\mbox{and}\\
\label{eq:fid1}
& \pdtau (\tau f) =\tau R, \quad \  \tau^2|\nabla f|^2 - \tau f = -\tau^2 R, \quad \tau \Rc(g) + \tau \nabla\nabla f= \frac{g}{2}.
\end{align}
\end{enumerate}
\end{proposition}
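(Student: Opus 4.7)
My plan is to realize $\bar{g}$ as the time-$1$ slice of a self-similar backwards Ricci flow localized on the end $\bar{V}$. Formally, for any shrinking soliton the time-dependent vector field $X_\tau \dfn -\nabla \bar{f}/\tau$ generates diffeomorphisms $\bar{\Psi}_\tau$ such that $g(\tau) \dfn \tau\bar{\Psi}_\tau^* \bar{g}$ satisfies $\partial_\tau g = 2\Rc(g)$, as follows from the soliton identity $\Lie_{\nabla\bar{f}}\bar{g} = \bar{g} - 2\Rc(\bar{g})$ together with the scale-invariance of $\Rc$. All four conclusions will follow from this self-similar ansatz once we establish quantitative decay estimates for $\bar{f}$, $\nabla\bar{f}$, and the curvatures of $\bar{g}$ on $\bar{V}$.

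The critical preliminary input (which I would defer to the appendix) is a normalization lemma upgrading the $C^2_{\text{loc}}$ asymptotics of Definition \ref{def:asymcone}: after modifying $\Phi$ suitably, one should produce $\rad_1$, $K$, $N$ so that on $E_{\rad_1}$,
\begin{align*}
\bar{f}\circ\Phi &= \tfrac{1}{4}r_c^2 + O(r_c^{-2}),\quad \nabla\bar{f}\circ\Phi \approx \tfrac{1}{2}r_c\partial_r + O(r_c^{-1}),\\
|\nabla^{(m)}\Rm(\bar{g})\circ\Phi| &\leq K\,r_c^{-m-2},
\end{align*}
in $C^\infty_{\text{loc}}$. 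The $\bar{f}$-expansion is forced by the Hessian equation $\nabla\nabla\bar{f} = \bar{g}/2 - \Rc(\bar{g})$ (whose right side tends to $g_c/2$ by Definition \ref{def:asymcone}) and the normalization $\bar{R}+|\nabla\bar{f}|^2 = \bar{f}$, while the curvature decay comes from Shi-type derivative estimates applied to the rescaled convergence. Granted this lemma, I would define $\bar{\Psi}_\tau$ for $\tau\in(0,1]$ by integrating $\partial_\tau \bar{\Psi}_\tau = -\tau^{-1}\nabla\bar{f}\circ\bar{\Psi}_\tau$ from $\bar{\Psi}_1 = \Phi|_{E_{\rad_0}}$ with $\rad_0 > \rad_1$. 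Since $\nabla\bar{f}\approx (r_c/2)\partial_r$, radial trajectories obey $r(\tau)\sim r(1)\tau^{-1/2}$ and remain inside $\bar{V}$ for all $\tau\in(0,1]$ once $\rad_0$ is sufficiently large, yielding (1); the self-similar computation gives (2) on $(0,1]$, and scale-invariance transfers the curvature decay in the lemma to (3). For the smooth extension of $g(\tau)$ to $\tau=0$, I would identify $\bar{\Psi}_\tau$ asymptotically with $\Phi\circ\rho_{\tau^{-1/2}}$ (controlling the error via the expansion of $\nabla\bar{f}$), rewrite $\tau\bar{\Psi}_\tau^*\bar{g}$ as $\lambda^{-2}\rho_\lambda^*\Phi^*\bar{g}+o(1)$ with $\lambda=\tau^{-1/2}$, and invoke the $C^\infty$ cone convergence supplied by the lemma.

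Finally, the identities \eqref{eq:fid1} are direct consequences of the construction. Differentiating $f(\tau) = \bar{\Psi}_\tau^*\bar{f}$ along $X_\tau$ yields $\partial_\tau f = -\tau^{-1}\bar{\Psi}_\tau^*|\nabla\bar{f}|^2$, hence
\[
\partial_\tau(\tau f) = f - \bar{\Psi}_\tau^*|\nabla\bar{f}|^2 = \bar{\Psi}_\tau^*(\bar{f}-|\nabla\bar{f}|^2) = \bar{\Psi}_\tau^*\bar{R} = \tau R,
\]
using the normalization and $R(g(\tau)) = \tau^{-1}\bar{\Psi}_\tau^*\bar{R}$. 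The remaining two identities are obtained by pulling back the soliton equations for $\bar{g}$ via $\bar{\Psi}_\tau$ and using $\Rc(\lambda g) = \Rc(g)$. To prove \eqref{eq:fid0}, I would integrate $\partial_\tau(\tau f) = \tau R$ from $\tau=0$, using $|R|\leq K_0/r_c^2$ to produce both the smooth extension of $\tau f$ across $\tau=0$ and a bound of the form $|4\tau f - h_0| \leq N_0/r_c^2$ for the limiting function $h_0$; the identification $h_0 = r_c^2$ is pinned down either directly via $\bar{f}\circ\Phi\sim r_c^2/4$ or by noting that the Hessian identity in \eqref{eq:fid1} forces $\nabla\nabla(\tau f)|_{\tau=0} = g_c/2$ on the cone, so that $h_0 - r_c^2$ is a constant which the asymptotic expansion pins to zero. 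The principal obstacle throughout is the normalization lemma, which requires extracting quantitative pointwise decay for $\bar{f} - r_c^2/4$ and the curvature tensor from the merely $C^2_{\text{loc}}$ convergence of Definition \ref{def:asymcone} on an end that is not assumed to be geodesically complete.
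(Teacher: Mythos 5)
Your plan — realize $\bar g$ as the $\tau=1$ slice of a self-similar flow $g(\tau)=\tau\bar\Psi_\tau^*\bar g$ generated by $-\nabla\bar f/\tau$ and let $\tau\to 0$ — is the correct framework and agrees with the paper. The identities in \eqref{eq:fid1} do indeed fall out by pullback exactly as you describe, and the organizational idea of a preliminary normalization lemma recording quadratic curvature decay and $\bar f\sim r_c^2/4$ matches the paper's use of Lemma~\ref{lem:asymcone1} and Lemma~\ref{lem:fnormalization}.

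The genuine gap is in the step where you claim $\tau\bar\Psi_\tau^*\bar g = \lambda^{-2}\rho_\lambda^*\Phi^*\bar g + o(1)$ (with $\lambda=\tau^{-1/2}$) and thereby identify $g(0)$ with $g_c$. The trajectories of $\bar\nabla\bar f$ do \emph{not} asymptote to the dilation $\Phi\circ\rho_{\tau^{-1/2}}$ applied to their starting point: even with the expansion $\nabla\bar f = \tfrac12 r_c\partial_r + O(r_c^{-1})$, the lower-order term produces a drift (both a finite angular deflection and a finite radial offset, exactly as in the two-sided estimate \eqref{eq:ssigma}) that accumulates over the entire backwards flow $\tau\searrow 0$ and converges to a \emph{nontrivial} limit diffeomorphism $\Omega_0$ of $E_0$. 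Writing $\bar\Psi_\tau = \Phi\circ\rho_{\tau^{-1/2}}\circ\Omega_\tau$ with $\Omega_\tau\to\Omega_0$, one gets $\tau\bar\Psi_\tau^*\bar g = \Omega_\tau^*(\lambda^{-2}\rho_\lambda^*\Phi^*\bar g)\to\Omega_0^*g_c$, and $\Omega_0^*g_c$ is not $g_c$ (it need not even be in the normal form $dr^2 + r^2g_\Sigma$ in the given coordinates). So claim (2), $g(\cdot,0)\equiv g_c$, does not follow as you state it. This is precisely why the paper does \emph{not} attempt to compare $\bar\Psi_\tau$ with a dilation; instead it (i) establishes convergence of $-tf$ to a limit $q$ satisfying $|\nabla q|^2 = q$ and $\nabla\nabla q = \tfrac12 g_0$, from which $g_0=g(0)$ is recognized intrinsically as a cone $\hat g_c = d\hat r^2 + \hat r^2 g_{\hat\Sigma}$ over an a priori \emph{different} cross-section $\hat\Sigma$; (ii) invokes a separate uniqueness-of-asymptotic-cones lemma (Lemma~\ref{lem:uniquenessofasymcones}, proved by a Gromov--Hausdorff/Calabi--Hartman argument) to show $(\hat E_0,\hat g_c)$ is isometric to $(E_0,g_c)$; and (iii) post-composes $\bar\Psi_\tau$ by that isometry to finally arrange $g(0)=g_c$ exactly. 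Without a substitute for steps (i)--(iii), your argument does not produce the required normalization $g(0)=g_c$. A secondary, more minor caveat: your "normalization lemma" asserts $C^\infty_{\mathrm{loc}}$ cone convergence from Definition~\ref{def:asymcone}'s $C^2_{\mathrm{loc}}$ hypothesis in one step; the paper instead first passes to the self-similar flow and uses Shi's interior estimates (Claim~\ref{clm:derivdecay}) to bootstrap the derivative decay, which is cleaner and is what actually justifies the smooth extension to $\tau=0$.
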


Therefore, Theorem \ref{thm:unique} reduces to the following assertion of backwards uniqueness.
\begin{theorem}\label{thm:bu} Suppose that $g$ and $\tilde{g}$ are self-similar solutions to \eqref{eq:brf}
 on $E_{\rad_0}\times (0, 1]$ for some $\rad_0\geq 1$ that extend smoothly to $g_c$ on $E_{\rad_0}\times \{0\}$
and with their potentials $f$ and $\hat{f}$ satisfy
\eqref{eq:curvdecay} -- \eqref{eq:fid1} for some constants $K_0$ and $N_0$.
 Then there exists $\rad \geq \rad_0$ and $\tau^{\prime}\in (0, 1)$ such that $g\equiv \tilde{g}$
 on $\Ec_{\rad}^{\tau^{\prime}}$.
\end{theorem}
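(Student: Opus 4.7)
\textbf{Plan for the proof of Theorem \ref{thm:bu}.} Since both $g$ and $\tilde g$ extend smoothly to the common cone $g_c$ at $\tau = 0$, the difference $h\dfn \tilde g - g$ vanishes at $\tau = 0$, and by \eqref{eq:brf}
\begin{equation*}
\pdtau h = 2\bigl(\Rc(\gt) - \Rc(g)\bigr).
\end{equation*}
Expanding the right-hand side via the linearization of $\Rc$ gives $\pdtau h = -\Delta h + (\text{terms lower-order in } h, \del h, A)$, where $A\dfn \delt - \del$. The $\del h$ and $A$ terms cannot be absorbed by $|h|$ alone --- this is the familiar gauge-degeneracy of the Ricci flow, which, as noted in the introduction, cannot be broken by a DeTurck-type modification in backward time. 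Consequently, no scalar backwards uniqueness result applies to the equation for $h$ in isolation.

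My plan is to follow the prolongation strategy of \cite{KotschwarBackwardsUniqueness}, adapted to the present incomplete, asymptotically conical setting. Adjoin to $h$ auxiliary unknowns built from $A$ and from the curvature differences $\Rmt-\Rm$ (and possibly one further derivative) and assemble them into a ``parabolic'' section $\Xb$ and an ``ordinary'' section $\Yb$ of tensor bundles over $E_{\rad_0}$ satisfying, on $\Ec_{\rad_0}^1$,
\begin{equation*}
\bigl|\pdtau \Xb + \Delta \Xb\bigr| \le C\bigl(|\Xb| + |\del \Xb| + |\Yb|\bigr), \qquad \bigl|\pdtau \Yb\bigr| \le C\bigl(|\Xb| + |\del \Xb| + |\Yb|\bigr),
\end{equation*}
with $\Xb(\cdot, 0)\equiv 0$ and $\Yb(\cdot, 0)\equiv 0$, the constant $C$ being controlled by $K_0$ through \eqref{eq:curvdecay}. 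Theorem \ref{thm:bu} then reduces to a backwards uniqueness statement for this coupled PDE--ODE system, with \emph{no} boundary data available at the inner boundary $\{r_c = \rad_0\}$ and only \eqref{eq:curvdecay} controlling $\Xb, \Yb$ at spatial infinity.

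To close the system without boundary data, I would develop two pairs of weighted integral (Carleman-type) inequalities tailored to the self-similar background, following the Euclidean template of Escauriaza--Seregin--\v{S}ver\'ak \cite{EscauriazaSereginSverak} and its parabolic-soliton adaptation in \cite{Wang}. The first pair uses a weight concentrated near $\tau = 0$: through the near-identity $4\tau f \approx r_c^2$ from \eqref{eq:fid0}, this weight can be taken to be of approximately Gaussian form in the variable $r_c^2/\tau$, expressed naturally through the potential $f$, and in conjunction with the vanishing at $\tau = 0$ it would force decay of the form $\int(|\Xb|^2+|\Yb|^2)\le C\tau^N$ on compact subsets of $\Ec_{\rad_0}^1$ for every $N$. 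The second pair employs a weight with genuine spatial growth (again most naturally written in terms of $f$) and, combined with the first, propagates the vanishing to give $\Xb \equiv \Yb \equiv 0$ on some $\Ec_\rad^{\tau'}$. Two estimates are needed per weight, rather than one, precisely because both $\Xb$ and $\Yb$ occur on the right-hand sides of \emph{both} differential inequalities: the estimates for $\Xb$ and $\Yb$ must be derived simultaneously and coupled by absorption.

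The main obstacle is the simultaneous management of several competing scales in the choice of weight: (i) Gaussian concentration at $\tau = 0$ needed to exploit the initial vanishing and to absorb the ODE quantity $|\Yb|$ into the PDE estimate as $\tau\searrow 0$; (ii) spatial growth at $r_c \to \infty$ sufficient to dominate the zero- and first-order terms on the right side of the parabolic inequality against only the quadratic curvature decay; and (iii) the absence of any control at $\{r_c = \rad_0\}$, which forces the boundary terms produced by integration by parts to carry a sign that permits them to be discarded. The self-similar identities \eqref{eq:fid0}--\eqref{eq:fid1}, and in particular the soliton equation for $\del\del f$ together with $4\tau f \approx r_c^2$, are precisely what reconcile (i)--(iii): they allow a uniform change of variables $r_c^2 \leftrightarrow f$ up to errors absorbed by the curvature decay, and they provide the favorable commutators of $e^{-\lambda f}$ with $\pdtau + \Delta$ needed for the Carleman inequalities to close.
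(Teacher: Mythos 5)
Your overall strategy coincides with the paper's: reduce to a PDE--ODE system in the manner of \cite{KotschwarBackwardsUniqueness} (with parabolic unknowns built from $\Rm - \Rmt$ and $\nabla\Rm-\widetilde\nabla\Rmt$, and ordinary unknowns built from $g-\tilde g$, $\Gamma-\widetilde\Gamma$, $\nabla(\Gamma-\widetilde\Gamma)$), then close it with two pairs of Carleman inequalities patterned on \cite{EscauriazaSereginSverak} and \cite{Wang}, using $4\tau f\approx r_c^2$ and the soliton identities to construct the weights and to control the commutator terms. That is exactly what the paper does, in exactly this order: a pair concentrating near $\tau = 0$ that produces decay (Section \ref{sec:carleman2}), followed by a pair with quadratic--exponential spatial growth that forces vanishing (Section \ref{sec:carleman1}).

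There is, however, a genuine gap in your description of what the first pair is built from and what it must deliver. You propose the vertex-centered heat-kernel weight, approximately $G\sim\tau^{-n/2}e^{-r_c^2/(4\tau)}=\tau^{-n/2}e^{-f}$, and claim it yields $\int(|\Xb|^2+|\Yb|^2)\le C\tau^{N}$ on compact subsets of $\Ec_{\rad_0}^1$. This will not work. First, with $\rad_0\ge 1$ fixed and $\tau\searrow 0$, the weight $e^{-r_c^2/(4\tau)}$ dies super-exponentially at every point of $E_{\rad_0}$, so a Carleman estimate with this weight carries no information on the domain where the problem actually lives; the region where it concentrates, $r_c\lesssim\sqrt{\tau}$, is not in $E_{\rad_0}$ at all. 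Second, even $\tau$-decay on compacts would not suffice for step two: the second pair employs weights growing like $e^{\,c\,r_c^2}$ (e.g., $G_{1;\alpha,\tau_0}=\exp[\alpha(\tau_0-\tau)h^{2-\delta}+h^2]$), and to apply it through a cutoff-and-limit argument you need the weighted norms $\|\Xb\,e^{4r_c^2}\|_{L^2}$, $\|\nabla\Xb\,e^{4r_c^2}\|_{L^2}$ to be \emph{finite}, i.e., you need honest spatial Gaussian decay, not just $\tau$-order of vanishing. The missing idea is to use a one-parameter family of translated Gaussians
\begin{equation*}
\hat G_{2;a,\rho}(x,\tau)=\exp\!\left(-\frac{(r_c(x)-\rho)^2}{4(\tau+a)}\right),\qquad \rho > \rad_0,
\end{equation*}
which replaces the $y$-translation invariance of the Euclidean fundamental solution in \cite{EscauriazaSereginSverak} by a radial translation along the cone. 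It is this family, together with the time factor $\sigma_a^{-\alpha}$, that produces the spatial estimate (Claim \ref{cl:expdecay})
\begin{equation*}
\||\Xb|+|\nabla\Xb|+|\Yb|\|_{L^2\bigl(A((1-\sqrt s)\rho,(1+\sqrt s)\rho)\times[0,s]\bigr)}\ \le\ N\,e^{-\rho^2/(Cs)},
\end{equation*}
valid for all large $\rho$, which when integrated over $\rho$ supplies the finiteness of $\|\,\cdot\,e^{4r_c^2}\|_{L^2}$ that legitimizes step two. Your plan would need this replacement before it could close.
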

For the application to Theorem \ref{thm:unique}, note that $g(1)$ and $\tilde{g}(1)$ are isometric to $\tau^{-1}g(\tau)$ and $\tau^{-1}\tilde{g}(\tau)$, respectively, for any $\tau \in (0, 1]$.
We will postpone the proof of this theorem to Section \ref{sec:backwardsuniqueness}, until after we have developed the necessary ingredients
in Sections \ref{sec:pdeode} - \ref{sec:carleman2}. 

\subsection{Proof of Proposition \ref{prop:reduction}}

There are three main steps.  First, we show that
if a shrinking soliton $(\bar{M}, \bar{g}, \bar{f})$
is asymptotically conical along an end $\bar{V}$,
it has quadratic curvature decay, and so, on some end $\bar{V}^{\prime} \subset \bar{V}$, admits a reparametrization
that is compatible in a certain sense with the level sets of $\bar{f}$. Second, we show that a shrinking soliton with quadratic curvature decay gives rise
to a self-similar solution to the backwards Ricci flow that extends smoothly to a conical metric
on sufficiently distant regions at the singular time. (In particular, this shows that a soliton on a cylinder of the form
 $(a, \infty) \times \Sigma$ for some compact $\Sigma$ with quadratic curvature decay must be asymptotically conical.)
Finally, we argue that this conical limit metric and
the original asymptotic cone $g_c$ are isometric.  With a further adjustment by a diffeomorphism, we can then arrange that our
self-similar solution interpolates between a soliton asymptotic to $g_c$ and the cone $g_c$ itself.  

\subsubsection{Initial technical simplifications} To eliminate some notational baggage that 
we do not wish to carry with us through the entire proof, we make a couple of up-front reductions.
First, if $\Phi: E_{\rad}\to \bar{V}$ is the map from Definition \ref{def:asymcone}, then, 
replacing $\bar{g}$ and $\bar{f}$ by $\Phi^*\bar{g}$ and $\Phi^*\bar{f}$, we may as well assume
that $\Phi = \operatorname{Id}$ and $\bar{V} = E_{\rad}$. Second, Lemma \ref{lem:asymcone1} (b)-(c)
and Lemma \ref{lem:fnormalization}, after pulling-back by an additional diffeomorphism (and relabeling $\rad$) we may as well also assume that $\bar{f}$ and 
$\bar{g}$
are defined on $\bar{E}_{\rad/2} = (\rad/2, \infty) \times \bar{\Sigma}$ for some smooth closed $(n-1)$-manifold $\bar{\Sigma}$,
and that, writing $\bar{r}(x) \dfn d_{\bar{g}}(x, \partial \bar{E}_{\rad})$,
there are constants $K$ and $N$ such that the conditions
\begin{equation}\label{eq:techsimp1}
\bar{f}(r, \bar{\sigma}) = \frac{r^2}{4}, \quad(r^2 + 1)|\Rm(\bar{g})|_{\bar{g}}(r, \bar{\sigma}) \leq K, 
\quad
  N(r - 1) \leq \bar{r}(x) \leq N (r + 1),
\end{equation}
are satisfied for all $x = (r, \bar{\sigma}) \in \bar{E}_{\rad}$.  As we have only modified our soliton structure
by diffeomorphisms, our ``normalized'' $(\bar{E}_{\rad/2}, \bar{g}, \bar{f})$ will still be asymptotic to $(E_0, g_c)$
along an end of the closure of $\bar{E}_{\rad}$ 
in the sense of Definition \ref{def:asymcone}
(that we can adjust the domain of the diffeomorphism required by this definition to have the form $E_{\srad}$ for some $\srad$, follows from Lemma \ref{lem:asymcone1}(b)
and \eqref{eq:frbarcomp}).  We do not assume here that
$\Sigma$ and $\bar{\Sigma}$ are diffeomorphic.

\subsubsection{Distance estimates on the trajectories of $\bar{\nabla}\bar{f}$}
We now examine the relationship 
between the integral curves of the vector field $\bar{\nabla}\bar{f}$
and the radial trajectories.  In what follows, we will use $r$ to denote both the global coordinate on the factor $(0, \infty)$
and the function on $\bar{E}_0$ given by $r(r, \bar{\sigma}) = r$.
\begin{claim}
\label{clm:trajectory}
There exists $\rad^{\prime}>\rad$ depending only on $\rad$, $K$, and $N$, and
a one-parameter family of local diffeomorphisms $\Psi_s :\bar{E}_{\rad^{\prime}}\to \bar{E}_{\rad^{\prime}}$,
defined for $s\geq 0$, which satisfy
\begin{equation}
\label{eq:Psidef}
\pd{\Psi_s}{s} =\bar\nabla\bar{f}\circ \Psi_s
\quad\mbox{and}\quad
\Psi_{0}= \operatorname{Id}_{\bar{E}_{\rad}}.
\end{equation}
Moreover, for all $(r, \bar\sigma)\in \hat{E}_{\rad^{\prime}}$, $r_s\dfn r \circ\Psi_s$ satisfies 
\begin{equation}
\label{eq:ssigma}
(r-1)e^{s/2}+1\le r_s(r, \bar\sigma) \le (r+1)e^{s/2}-1.
\end{equation}
\end{claim}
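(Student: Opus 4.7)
The strategy is to exploit the normalizations in \eqref{eq:techsimp1} together with the soliton identity $R + |\bar{\nabla}\bar{f}|^2 = \bar{f}$ to convert the problem into an autonomous ODE comparison for the radial coordinate $r$ along trajectories of $\bar{\nabla}\bar{f}$, with all curvature contributions appearing as a small and manifestly controllable error term.

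First I would extract a pointwise formula for $|\bar{\nabla}r|^2$. Since $\bar{f}=r^2/4$, we have $\bar{\nabla}\bar{f} = \tfrac{r}{2}\bar{\nabla}r$, where $\bar{\nabla}r$ denotes the $\bar{g}$-gradient of the coordinate function $r$. The soliton identity then yields
\[
\tfrac{r^2}{4}|\bar{\nabla}r|^2 \;=\; |\bar{\nabla}\bar{f}|^2 \;=\; \bar{f} - R \;=\; \tfrac{r^2}{4} - R,
\]
so $|\bar{\nabla}r|^2 = 1 - 4R/r^2$ on $\bar{E}_{\rad}$. Contracting the curvature bound in \eqref{eq:techsimp1} gives a pointwise bound $|R| \le C_n K/r^2$ on $\bar{E}_{\rad}$, so in particular $|\bar{\nabla}r|^2 \to 1$ uniformly as $r \to \infty$.

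Next, with $r_s \dfn r\circ \Psi_s$ and $R_s \dfn R\circ \Psi_s$, differentiation along a trajectory of $\bar{\nabla}\bar{f}$ gives
\[
\frac{dr_s}{ds} \;=\; \bigl(\bar{\nabla}\bar{f}\bigr)(r)\circ \Psi_s \;=\; \tfrac{r_s}{2}\bigl(1 - 4R_s/r_s^2\bigr) \;=\; \tfrac{r_s}{2} - \tfrac{2R_s}{r_s}.
\]
Choosing $\rad' > \max\{\rad,1\}$ large enough, in a way depending only on $\rad$, $K$, and $N$, that $|2R/r|\le 1/2$ whenever $r \ge \rad'$, we obtain the two-sided comparison
\[
\tfrac{r_s - 1}{2} \;\le\; \frac{dr_s}{ds} \;\le\; \tfrac{r_s + 1}{2}
\]
along any trajectory for which $r_s \ge \rad'$. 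Integrating the lower inequality applied to $u = r_s - 1$ and the upper inequality applied to $v = r_s + 1$ then produces \eqref{eq:ssigma}.

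It remains to verify that $\Psi_s$ exists on $\bar{E}_{\rad'}$ for all $s \ge 0$. The lower bound in \eqref{eq:ssigma} forces $r_s \ge r \ge \rad'$ for every $s \ge 0$, so any trajectory starting in $\bar{E}_{\rad'}$ is trapped in $\bar{E}_{\rad'}$ and stays uniformly away from the boundary $\{r = \rad/2\}$; the upper bound keeps $r_s$ finite on every compact $s$-interval, ruling out finite-time blow-up; and compactness of $\bar{\Sigma}$ rules out escape in the transverse directions. Standard ODE theory then provides the smooth family $\Psi_s: \bar{E}_{\rad'} \to \bar{E}_{\rad'}$ satisfying \eqref{eq:Psidef}, which is automatically a local diffeomorphism as the flow of a smooth vector field. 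The only quantitative input is the choice of $\rad'$ absorbing the $2R_s/r_s$ correction, which is immediate from the quadratic curvature decay in \eqref{eq:techsimp1}, so no essential obstacle presents itself once the soliton identity has been used to express $|\bar{\nabla}r|^2$ explicitly.
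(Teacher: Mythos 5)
Your argument is correct and follows essentially the same route as the paper: you identify $\partial_s r_s$ with $\langle\bar\nabla\bar f,\bar\nabla r\rangle\circ\Psi_s$, use the soliton identity $|\bar\nabla\bar f|^2 + R = \bar f$ and the normalization $\bar f = r^2/4$ to express it as $\tfrac{r_s}{2}\bigl(1 - 4R_s/r_s^2\bigr)$, absorb the curvature term using quadratic decay to get the two-sided ODE comparison, and integrate. The only stylistic difference is that the paper first records the crude monotonicity $\partial_s(\bar f\circ\Psi_s)=|\bar\nabla\bar f|^2>0$ before passing to the refined estimate, whereas you obtain the same information directly from the comparison inequality (which is fine since $\rad'>1$); both handle global existence identically.
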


\begin{proof}
First, by the local existence and uniqueness theory for ODE, for each initial point $x\in \bar{E}_{\rad/2}$, 
the trajectory $\Psi_s(x)$ of $\bar{\nabla}\bar{f}$ with $\Psi_0(x) = x$ exists for small $s$. Moreover, since $\bar{f} = r^2/4$, 
and
\begin{equation}
\label{eq:basicode}
\pd{}{s}(\bar{f}\circ\Psi_s) =|\overline{\nabla}\bar{f}|_{\bar{g}}^2\circ {\Psi_{s}} >0,
\end{equation}
it follows that $r_s(x)> r(x)$ for all $x$ and all $s\geq 0$ for which the trajectory is defined. In particular,
$\Psi_s(\bar{E}_{\rad}) \subset \bar{E}_{\rad}$, i.e., trajectories which begin in $\bar{E}_{\rad}$ stay in 
$\bar{E}_{\rad}$. 

Using the second equation in \eqref{eq:shrinker} and the boundedness of the scalar curvature $\bar{R} = \operatorname{scal}(\bar{g})$,  
we can obtain even better control on the distance, namely, 
\[
		    \pd{r_s}{s} = (\bar{f}^{-\frac12}|\overline{\nabla}\bar{f}|_{\bar{g}}^2)\circ\Psi_s 
= \frac{r_s}{2}\left(1- \frac{4\bar{R}\circ\Psi_s}{r_s^2}\right).
\]
So, if $\rad^{\prime}\geq \rad$ is sufficiently
large (depending only on $n$, $K$, and $\rad$), then 
\[
	   \frac{1}{2}\left(r_s - 1\right) \leq \pd{r_s}{s} \leq 
	\frac{1}{2}\left(r_s + 1\right)
\]
on $\bar{E}_{\rad^{\prime}}$.
Integrating this last equation with respect to $s$ yields \eqref{eq:ssigma}
and also proves the existence of the local diffeomorphisms $\Psi_s:\bar{E}_{\rad^{\prime}}\to \bar{E}_{\rad^{\prime}}$ for all $s\geq 0$. 
\end{proof}

\subsubsection{Derivative estimates}

Now, continuing from the statement of Claim \ref{clm:trajectory}, 
we set $s(t)\dfn-\log(-t)$ for $t < 0$ and define the family of metrics
\begin{equation}
\label{eq:gdef}
g(t)=-t\Psi_{s(t)}^*\bar{g},
\end{equation}
on $\bar{E}_{\rad^{\prime}}\times [-1,0)$. Then, as in Section 2.1 of \cite{ChowKnopf}, $g(t)$ solves \eqref{eq:rf}
with initial condition $g(-1) = \bar{g}$.  

Using the self-similarity
of $g(t)$, we can parlay the quadratic decay of $\Rm(\bar{g})$ into decay estimates
for the higher derivatives of $\Rm(g(t))$.
First, by the quadratic curvature decay and (\ref{eq:ssigma}), it follows that 
\begin{align*}
\begin{split}
&\sup_{\bar{E}_{\rad^{\prime}}\times [-1,0)}(r^2(x)+1)|\Rm(g)|_g(x, t) = \sup_{\bar{E}_{\rad^{\prime}}\times[-1, 0)}\frac{(r^2(x)+1)}{-t}\left|\Rm(\bar{g})\right|_{\bar{g}}\circ\Psi_{s(t)}(x)\\
&\quad\le \sup_{\bar{E}_{\rad^{\prime}}\times [0, \infty)} 8(r_s^2(x)+1)\left|\Rm(\bar{g})\right|_{\bar{g}}\circ\Psi_{s}(x) 
\le 8K.\\
\end{split}
\end{align*}
Then, according to the estimates of Shi \cite{Shi}, we have
$|\nabla^{(m)}\Rm(g)|_g(x, t) \leq K_m$ for all $m\geq 0$ and 
$(x, t) \in \{\,2\rad^{\prime}\leq r(x) \leq 4\rad^{\prime}\,\}\times [-1/2,0)$. (Here and below, $K_m$ denotes a constant that 
changes from inequality to inequality
but depends only on $m$, $n$, and $K$.) 
Thus, by the definition of the metrics $g(t)$, the distance estimate \eqref{eq:ssigma},
and the estimate on $|\Rm(g)|_g$ above, we have
\begin{equation}
\label{eq:solitoncurv}
\sup_{\bar{E}_{\rad^{\prime}}}\left(r^{m+2}(x)+1\right)\left|\overline{\nabla}^{(m)}\Rm(\bar{g})\right|_{\bar{g}}(x)\le K_m.
\end{equation}
Consequently, from a scaling argument akin to the one above, we have the following estimate on the higher derivatives of $\Rm(g(t))$.
\begin{claim}\label{clm:derivdecay}
For all $m\geq 0$, there exists a constant $K_m= K_m(n, K)$ such that
the curvature tensor of the solution $g(t) = -t\Psi_{s(t)}^*\bar{g}$ satisfies
\begin{equation}
\label{eq:derivdecay}
\sup_{(x, t) \in \bar{E}_{\rad^{\prime}}\times [-1,0)} \left(r^{m+2}(x)+1\right)\left|\nabla^{(m)}\Rm(g)\right|_g(x, t)\le K_m.
\end{equation}
\end{claim}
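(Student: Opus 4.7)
The plan is to bootstrap directly from the stationary decay estimate \eqref{eq:solitoncurv} by exploiting the self-similar form $g(t) = -t\,\Psi_{s(t)}^*\bar g$. Since $g(t)$ differs from $\bar g$ only by pullback along a diffeomorphism and multiplication by a positive scalar, the Levi-Civita connections of $g(t)$ and $\Psi_{s(t)}^*\bar g$ coincide, and a short index count gives, for every $m\ge 0$,
\begin{equation*}
\bigl|\nabla^{(m)}\Rm(g(t))\bigr|_{g(t)}(x) \;=\; (-t)^{-(m+2)/2}\,\bigl|\bar\nabla^{(m)}\Rm(\bar g)\bigr|_{\bar g}\!\bigl(\Psi_{s(t)}(x)\bigr)
\end{equation*}
for all $x \in \bar E_{\rad^{\prime}}$ and $t \in [-1,0)$. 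As a sanity check, the $m=0$ case reproduces the identity $|\Rm(g)|_g = (-t)^{-1}|\Rm(\bar g)|_{\bar g}\circ\Psi_{s(t)}$ already used in the derivation of \eqref{eq:solitoncurv}.

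The next step is to feed \eqref{eq:solitoncurv} into the right-hand side at the point $\Psi_{s(t)}(x)$, whose radial coordinate is $r_{s(t)}(x)$, to obtain
\begin{equation*}
\bigl|\nabla^{(m)}\Rm(g(t))\bigr|_{g(t)}(x) \;\le\; \frac{(-t)^{-(m+2)/2}\,K_m}{r_{s(t)}^{m+2}(x) + 1}.
\end{equation*}
Combining \eqref{eq:ssigma} with the identity $e^{s(t)/2} = (-t)^{-1/2}$ yields $r_{s(t)}(x) \ge (r(x)-1)(-t)^{-1/2}$, so, after enlarging $\rad^{\prime}$ if necessary to ensure $\rad^{\prime}\ge 2$,
\begin{equation*}
r_{s(t)}^{m+2}(x) \;\ge\; \bigl(\tfrac{r(x)}{2}\bigr)^{m+2}(-t)^{-(m+2)/2}
\end{equation*}
on $\bar E_{\rad^{\prime}}$. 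The powers of $(-t)$ cancel exactly; after multiplying through by $r^{m+2}(x)+1$ and absorbing the additive $+1$ into the constant (using $r\ge 2$), \eqref{eq:derivdecay} follows with $K_m$ replaced by a multiple depending only on $m$, $n$, and $K$.

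The main obstacle is purely bookkeeping: one must verify that the factor of $(-t)^{-(m+2)/2}$ produced by the parabolic rescaling is precisely compensated by the $(-t)^{-(m+2)/2}$ growth of $r_{s(t)}^{m+2}$ along the trajectories of $\bar\nabla\bar f$. This works because the decay rate $r^{-(m+2)}$ appearing in \eqref{eq:solitoncurv} is exactly the scale-invariant one, so no new analytic input is needed beyond the stationary estimate \eqref{eq:solitoncurv} and the trajectory bound \eqref{eq:ssigma} from Claim \ref{clm:trajectory}.
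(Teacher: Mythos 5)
Your proof is correct and carries out exactly the ``scaling argument akin to the one above'' that the paper leaves implicit: you use that multiplication by the positive scalar $-t$ leaves the Levi--Civita connection unchanged (so $\nabla^{(m)}\Rm$ is unaffected as a $(1,3+m)$-tensor), compute the scaling $\lambda^{-(m+2)/2}$ of the induced norm, pull back via $\Psi_{s(t)}$, and then observe that the factor $(-t)^{-(m+2)/2}$ is exactly matched by the growth $r_{s(t)}^{m+2}\gtrsim r^{m+2}(-t)^{-(m+2)/2}$ coming from \eqref{eq:ssigma}. The extra step of requiring $\rad'\ge 2$ to convert $(r-1)^{m+2}$ into a multiple of $r^{m+2}$ is also implicit in the paper's earlier derivation of \eqref{eq:solitoncurv}, so no essential deviation there.
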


Since $\bar{\Sigma}$ is compact, we may find a finite atlas for $\bar{E}_{\rad^{\prime}}$ 
for which we have uniform estimates on the derivatives of the charts,
and argue as in the proofs of Theorem 6.45 and Proposition 6.48 of \cite{ChowKnopf} to see that $g(t)$ converges smoothly to a smooth metric 
$g_0 = g(0)$ on $\bar{E}_{\rad^{\prime}}\times \{0\}$.

\subsubsection{The potential function $f$ and limit metric $g(0)$}
Now define $f$ on $\bar{E}_{\rad^{\prime}}\times [-1,0)$ by $f=\Psi_{s(t)}^\ast\bar{f}$.  Then $f$ and $g$ 
together form a shrinking soliton structure
on $\bar{E}_{\rad^{\prime}}$ for each $t\in [-1, 0)$, albeit one with the constant $-1/(2t)$ in place of $1/2$
on the right side of \eqref{eq:shrinker}. The following identities are standard (see, e.g., Section 4.1 of \cite{ChowLuNi})
and follow easily from the definition of $f$, $g$, and equation \eqref{eq:shrinker}.
\begin{claim}
\label{lem:fidentity}
On $\bar{E}_{\rad^{\prime}}\times [-1, 0)$, $f$ satisfies
\begin{align*}
\frac{\partial f}{\partial t} =|\nabla f|^2, \quad |\nabla f|^2 + \frac{f}{t} = -R, \quad\mbox{and}\quad
\nabla\nabla f = - \Rc(g) - \frac{g}{2t}.
\end{align*}
\end{claim}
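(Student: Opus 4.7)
The plan is to prove all three identities by direct computation, expressing $g$, $f$, and their derivatives as pullbacks of $\bar g$, $\bar f$ under $\phi_t\dfn\Psi_{s(t)}$ and invoking the soliton equations \eqref{eq:shrinker} satisfied by $(\bar g,\bar f)$. Two simple scaling facts will be used repeatedly: the Levi-Civita connection and the Ricci tensor of a metric are invariant under multiplication of the metric by a positive constant, whereas the inverse metric and the scalar curvature scale by the reciprocal factor. Since $g(t)=-t\,\phi_t^\ast\bar g$, this means $\nabla$ agrees with the connection of $\phi_t^\ast\bar g$ (so $\nabla\nabla f=\phi_t^\ast(\bar\nabla\bar\nabla\bar f)$) and $\Rc(g)=\phi_t^\ast\Rc(\bar g)$, while $|\nabla f|_g^2=-t^{-1}\phi_t^\ast|\bar\nabla\bar f|_{\bar g}^2$ and $R=-t^{-1}\phi_t^\ast\bar R$.

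With these observations in hand, the third identity is essentially the pullback of the first soliton equation: using $\phi_t^\ast\bar g=-g/t$, pulling back $\Rc(\bar g)+\bar\nabla\bar\nabla\bar f=\tfrac12\bar g$ yields $\Rc(g)+\nabla\nabla f=-g/(2t)$ at once. The second identity follows from pulling back the normalization $\bar R+|\bar\nabla\bar f|_{\bar g}^2=\bar f$, dividing by $-t$, and substituting the scaling formulas just noted to get $-R-|\nabla f|^2=-f/t$, which is the claim.

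For the first identity, I would differentiate $f(x,t)=\bar f(\Psi_{s(t)}(x))$ in $t$. Since $s(t)=-\log(-t)$ satisfies $ds/dt=-1/t$, the chain rule combined with the defining ODE $\partial_s\Psi_s=\bar\nabla\bar f\circ\Psi_s$ gives
\[
\frac{\partial f}{\partial t}=\frac{ds}{dt}\,\phi_t^\ast|\bar\nabla\bar f|_{\bar g}^2=-\frac{1}{t}\,\phi_t^\ast|\bar\nabla\bar f|_{\bar g}^2,
\]
which is precisely $|\nabla f|_g^2$ by the scaling formula above.

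There is no genuine obstacle here; the claim is a bookkeeping exercise in tensor naturality under diffeomorphism pullback and constant-rescaling behavior of curvature and norms. The only minor points requiring care are the sign of $ds/dt$ (since $t<0$) and remembering which tensorial objects are scale-invariant (the connection, the Ricci tensor) versus which pick up a factor of $-1/t$ (the inverse metric, and hence all norms squared and the scalar curvature).
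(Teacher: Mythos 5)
Your proposal is correct and is precisely the ``standard'' computation the paper alludes to (deferring to Section~4.1 of Chow--Lu--Ni rather than spelling it out): pull back the two soliton equations \eqref{eq:shrinker} by $\Psi_{s(t)}$ and use that the connection and Ricci tensor are scale-invariant while $g^{-1}$, $R$, and $|\cdot|^2$ pick up a factor of $(-t)^{-1}$, together with the chain rule $\partial_t f = \tfrac{ds}{dt}\,\phi_t^\ast|\bar\nabla\bar f|_{\bar g}^2$ from $\partial_s\Psi_s = \bar\nabla\bar f\circ\Psi_s$ and $ds/dt=-1/t$. All signs and scaling factors check out.
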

Given the estimates \eqref{eq:derivdecay} on the derivatives of curvature, it follows from these identities
that $-tf$ converges locally smoothly as $t\nearrow 0$ to a smooth limit function $q$ on $\bar{E}_{\rad^{\prime}}$. 
Moreover, there exists $N>0$, depending only on $K$, such that
\begin{align}\label{eq:qprop}
-\frac{N}{r^{2}}\leq q - \frac{r^2}{4}\leq \frac{N}{r^{2}},\quad
|\nabla q|_{g_0}^2=q,\quad\mbox{and}\quad  \nabla\nabla_{g_0} q=\frac{1}{2}g_0.
\end{align}
The first inequality implies that $q$ is proper (on the closure of $\bar{E}_{\rad^{\prime}}$) and positive
on sufficiently distant regions, which, with the second identity, implies that the level sets of $q$ corresponding to sufficiently
large values are smooth and diffeomorphic to a common closed $(n-1)$-manifold $\hat\Sigma$. Moreover, the second inequality implies
the integral curves of $2\sqrt{q}$ are geodesic.
As in Section 1 of \cite{CheegerColding}, this and the third identity in \eqref{eq:qprop} implies that $g_0$ is conical, i.e., 
there exists $\hat{\rad}> 0$, and a diffeomorphism $\hat{\Phi}$ from
$\hat{E}_{\hat\rad}\dfn (\hat{\rad}, \infty) \times \hat{\Sigma}$ to an end of the closure of $\bar{E}_{\rad^{\prime}}$
satisfying
\[
(q \circ \hat\Phi)(\hat{r}, \hat\sigma) = \frac{\hat{r}^2}{4}, \quad\mbox{and}\quad
\hat{\Phi}^*(g_0) = \hat{g}_c\dfn d\hat{r}^2 + \hat{r}^2g_{\hat{\Sigma}}. 
\]
\subsection{A final reparametrization} \label{ssec:proofofredprop}
Now consider the family of metrics $\hat{\Phi}^*(g(t))$ on $\hat{E}_{\hat{\rad}}$ for $t\in [-1, 0]$. Each member of
this family is uniformly equivalent to $\hat{g}_c = \hat\Phi^*(g_0)$ in view of the boundedness of $\Rc(g(t))$, and
from this equivalence, the identity
\[
    \hat{\Phi}^*(\bar{g}) = \hat{g}_c + \int_{-1}^0 2\Rc(\hat{\Phi}^*(g(s)))\,ds,
\]
the second and third inequalities in \eqref{eq:techsimp1}, and equation \eqref{eq:qprop},
it follows that there is a constant $N$ such that
\[
    |\hat{\Phi}^*(\bar{g}) - \hat{g}_c|_{\hat{g}_c}(\hat{r}, \hat{\sigma}) \leq \frac{N}{\hat{r}^2 +1}
\]
on $\hat{E}_{\hat\rad}$. Writing $\hat{\rho}_{\lambda}$ for the dilation map on $\hat{E}_0$, it follows immediately that the family of 
metrics
$\lambda^{-2}\hat\rho_{\lambda}^*\hat\Phi^*(\bar{g})$ converges to $\hat{g}_c$ in $C^0_{\mathrm{loc}}(\hat{E}_{\hat{\rad}}, \hat{g}_c)$
as $\lambda\to\infty$.

On the other hand, we assume that $\bar{g}$ is asymptotic to $(E_0, g_c)$ along an end of $\bar{E}_{\rad}$. Since $\bar{E}_{\rad} \setminus \bar{E}_{\rad+1}$
is bounded relative to $\bar{g}$ (implying in particular, that $(\bar{E}_{\rad}, \bar{g})$ has at most one end relative to any compact set),
  Lemma \ref{lem:uniquenessofasymcones}
implies that $(E_0, g_c)$ and $(\hat{E}_0, \hat{g}_c)$ are isometric.  Call the isometry between them $F$.  Replacing $g(t)$ and $f(t)$ with
 their pull-backs by  $\hat{\Phi}\circ F$, on a sufficiently distant end we then achieve $g(0) = g_c$ exactly and that $-tf$ converges smoothly to $r_c^2/4$ 
as $t\nearrow 0$. The estimates \eqref{eq:curvdecay}
and \eqref{eq:fid0}, which hold in terms of the radial parameter $r$ for $f$ and $g$ prior to their replacement by their pull-backs
will then also hold (for possibly larger constants) in terms of $r_c$.
  Setting $\tau = -t$ completes the proof.

\section{A PDE-ODE System}
\label{sec:pdeode}
Next, as in Section 2 of \cite{KotschwarBackwardsUniqueness}, we convert the backwards uniqueness problem
in Theorem \ref{thm:bu}
into one for solutions to a PDE-ODE system of inequalities that are amenable to the 
development of parabolic-type Carleman inequalities.
The idea is to try to build a closed system out of sufficiently many components of the form
 $\nabla^{(k)}(\Rm - \Rmt)$ (the ``PDE part'') and $\nabla^{(k)}(g- \gt)$ (the ``ODE part'').  (Here and in what follows
we will simply write $\Rm$ and $\Rmt$ for the $(3, 1)$ curvature tensors of the solutions $g$ and $\gt$.)
The curvature tensors $\Rm$ and $\Rmt$ will
independently satisfy strictly parabolic equations, and their differences will satisfy parabolic
equations up to lower order differences of these derivatives and error terms involving the tensors
$\nabla^{(k)}(g- \gt)$.  In turn, the norms of these latter tensors can be controlled
by the norms of the tensors $\nabla^{(k)}\Rm - \widetilde{\nabla}^{(k)}\Rmt$ via their evolution equations by a simple ODE comparison.

We will only summarize the construction of this system below and refer the reader to \cite{KotschwarBackwardsUniqueness} for more details.
The use of such PDE-ODE systems  originated in the work of Alexakis \cite{Alexakis} on 
the problem of unique continuation for the vacuum Einstein equations. (See also \cite{WongYu}.)

\subsection{Elements of the prolonged system} \label{ssec:xydef}

The PDE part of our system will be composed of the tensors
\begin{equation}\label{eq:pdedef}
  S\dfn\Rm-\Rmt \quad \mbox{and} \quad T \dfn \del\Rm -\delt\Rmt,
\end{equation}
and the ODE part of the tensors 
\begin{equation}\label{eq:odedef}
U \dfn g - \gt,\quad V \dfn \del - \delt,\quad\mbox{and}\quad W \dfn \nabla V.
\end{equation}
Here $V$ is a $(2, 1)$-tensor, given in local coordinates by $V_{ij}^k = \Gamma_{ij}^k - \Gamt_{ij}^k$.
Using $T_k^l(E_{\rad_0})$ to denote the bundle of $(k,l)$-tensors  over $E_{\rad_0}$, we define
\[
  \Xc \dfn T_{3}^1(E_{\rad_0})\oplus T_{4}^1(E_{\rad_0}), \quad \Yc \dfn T_2(E_{\rad_0})\oplus T_{2}^1(E_{\rad_0})\oplus T_{3}^1(E_{\rad_0}), 
\]
and smooth families of sections $\Xb(\tau) \in C^{\infty}(\Xc)$, $\Yb(\tau) \in C^{\infty}(\Yc)$ for $\tau \in [0, 1]$ by
\[
  \Xb\dfn S\oplus T,\quad \mbox{and}\quad \Yb\dfn U\oplus V\oplus W.
\]

We will use $g = g(\tau)$ and its Levi-Civita connection, $\nabla = \nabla_{g(\tau)}$,
as a reference metric and connection in our calculations
(and will use the same symbols to denote the metrics and connections they induce on $\Xc$ and $\Yc$ and the other tensor bundles we consider).
 We will also write  $\Delta = g^{ab}\nabla_a\nabla_b$ for the induced Laplacian on $\Xc$,
and use $|\cdot| \dfn |\cdot|_{g(\tau)}$ for induced family of norms on each fiber of $\Xc$ and $\Yc$. 

\subsection{Evolution equations}

We now import from \cite{KotschwarBackwardsUniqueness} the following evolution equations
for the components of $\Xb$ and $\Yb$, correcting some typographical errors in that reference.  Here the notation
$A\ast B$ represents a linear combination of contractions of tensors $A$ and $B$ with the metric $g$. 

\begin{lemma}[Lemma 2.4, \cite{KotschwarBackwardsUniqueness}] On $\Ec_{\rad_0}^1$, we have the evolution equations
\label{lem:evolution}
\begin{align}
\begin{split}
 \label{eq:sev}
 	\left(\pdtau + \Delta\right) S &=   \gt^{-1}\ast\delt\delt\Rmt\ast U + \delt\Rmt\ast V + \Rmt\ast W + \Rmt\ast V\ast V\\  
&\phantom{=} 					+  \gt^{-1}\ast\Rmt\ast\Rmt\ast U + \Rmt\ast S + S\ast S,
 \end{split}\\
 \begin{split}
 \label{eq:tev}
 	\left(\pdtau + \Delta\right) T &= \gt^{-1}\ast\delt^{(3)}\Rmt\ast U + \delt\delt\Rmt\ast V +\delt\Rmt\ast V\ast V\\
    &\phantom{=}+\delt\Rmt\ast W+ \gt^{-1}\ast\Rmt\ast\delt\Rmt\ast U + \Rmt\ast T\\
    &\phantom{=}+ \delt\Rmt\ast S + S\ast T,
\end{split}
 \end{align}
and
\begin{align}
 \label{eq:uev} \pdtau U_{ij} &= 2S_{lij}^l, \\
\begin{split}
\label{eq:vev} 
  \pdtau V_{ij}^k &= g^{mk}\left(T_{ipjm}^p + T_{jpim}^p 
	 - T_{mpij}^p\right)\\
  &\phantom{=}- \gt^{am}g^{bk}\left(\delt_{i}\tilde{R}_{jm} 
 	 + \delt_{j}\tilde{R}_{im} - \delt_m\tilde{R}_{ij}\right)U_{ab},
\end{split}\\ 
\begin{split}
\label{eq:wev}
  	\pdtau W
  &= \nabla T  + \delt\Rmt \ast V + \gt^{-1}\ast\delt\delt\Rmt\ast U\\
  &\phantom{=}
      +T\ast V + \gt^{-1}\ast\delt\Rmt\ast U\ast V.
\end{split}
\end{align}
\end{lemma}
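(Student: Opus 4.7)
The plan is to derive each equation by starting from the standard Ricci flow evolution formulas for $\Rm$, $\nabla\Rm$, $g$, and $\Gamma$ applied separately to $g$ and to $\gt$, and then subtracting and rewriting everything in terms of $\nabla$, $g^{-1}$, and the pieces $S, T, U, V, W$. The three essential identities driving the rewriting are $\gt^{-1} - g^{-1} = -\gt^{-1} U g^{-1}$ (from $A^{-1} - B^{-1} = A^{-1}(B-A)B^{-1}$), the tensoriality of $V = \nabla - \delt$ as the difference of two connections, and the Ricci identity expressing commutators of second derivatives in terms of the ambient curvatures. Because the target equations are only schematic (of the form $A\ast B$), only the orders of derivatives and the multiplicities of factors need to be tracked; precise constants are immaterial.

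For the evolution of $S$, I begin from the standard $(\pdtau + \Delta)\Rm = \Rm\ast\Rm$ under the backwards flow $\pdtau g = 2\Rc$, and the $\gt$-analog $(\pdtau + \tilde\Delta)\Rmt = \Rmt\ast\Rmt$. Subtracting and rewriting $\tilde\Delta\Rmt = \Delta\Rmt + (\tilde\Delta - \Delta)\Rmt$, the quadratic difference $\Rm\ast\Rm - \Rmt\ast\Rmt$ splits into $\Rmt\ast S + S\ast S$ after accounting for the mixed-metric contractions via the first identity (this produces the $\gt^{-1}\ast\Rmt\ast\Rmt\ast U$ term). The Laplacian remainder $(\tilde\Delta - \Delta)\Rmt$ is expanded by writing $\tilde\Delta = \gt^{ab}\delt_a\delt_b$ and substituting $\gt^{-1} \to g^{-1} - \gt^{-1} U g^{-1}$ and $\delt \to \nabla - V$ in turn; this yields, respectively, the $\gt^{-1}\ast\delt\delt\Rmt\ast U$ term, the $\delt\Rmt\ast V$ term, the $\Rmt\ast W$ term (from $\nabla V$ hitting $\Rmt$), and the quadratic $\Rmt\ast V\ast V$ correction accounting for the second-derivative commutator. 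The equation for $T = \nabla\Rm - \delt\Rmt$ follows by the identical procedure starting from $(\pdtau + \Delta)\nabla\Rm = \Rm\ast\nabla\Rm$: the curvature-quadratic difference now splits as $\Rmt\ast T + \delt\Rmt\ast S + S\ast T$ plus the $\gt^{-1}\ast\Rmt\ast\delt\Rmt\ast U$ mixed-metric correction, while $(\tilde\Delta - \Delta)\delt\Rmt$ produces the same $U, V, W$ family of terms as before, with $\Rmt$ replaced by $\delt\Rmt$ (and $\delt\delt\Rmt$ in the $U$-coefficient promoted to $\delt^{(3)}\Rmt$).

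For the ODE components I differentiate directly. For $U$, $\pdtau U_{ij} = 2(R_{ij} - \tilde R_{ij}) = 2S^l_{lij}$, since Ricci is the $(3,1)$-trace $R^l{}_{lij}$ of $\Rm$ and involves no metric inverse. For $V^k_{ij}$, I use the standard identity $\pdtau \Gamma^k_{ij} = g^{km}(\nabla_i R_{jm} + \nabla_j R_{im} - \nabla_m R_{ij})$ and its $\gt$-analog; the difference of the $\nabla R$ and $\delt\tilde R$ terms becomes traces of $T$, while the $g^{-1} - \gt^{-1}$ contribution produces the $\delt\tilde R\ast U$ remainder of \eqref{eq:vev}. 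For $W = \nabla V$, I write $\pdtau W = \nabla(\pdtau V) + [\pdtau, \nabla]V$; differentiating \eqref{eq:vev} gives $\nabla T$ as the principal term along with $\gt^{-1}\ast\delt\delt\Rmt\ast U$ and $\delt\Rmt\ast U\ast V$ pieces, while the commutator, via $\pdtau\Gamma\sim\nabla\Rc$, contributes the remaining $\delt\Rmt\ast V$ and $T\ast V$ terms.

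The main obstacle is not any single computation but the bookkeeping: each substitution of $\gt^{-1}$, $\delt$, or a $\pdtau$--$\nabla$ commutator spawns several new terms, and one must both verify that no principal term is dropped and check that each error term lands in one of the listed schematic classes with the correct derivative count on $\Rmt$ and the correct multiplicity of $U, V, W$. Working with the $\ast$-notation throughout and processing one evolution at a time keeps the calculation routine despite its length, and explains why these equations can be \emph{imported} as essentially a mechanical exercise once the prolonged system has been set up.
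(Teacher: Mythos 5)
The paper does not actually prove this lemma: the text immediately preceding it says ``We now import from \cite{KotschwarBackwardsUniqueness} the following evolution equations \dots correcting some typographical errors in that reference,'' and earlier says ``We will only summarize the construction of this system below and refer the reader to \cite{KotschwarBackwardsUniqueness} for more details.'' So there is no proof in this paper to compare against; the comparison is effectively with the proof in the cited source.

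Your outline is correct and is the expected derivation. The three structural moves you identify -- starting from $(\pdtau+\Delta)\Rm = \Rm\ast\Rm$ and $(\pdtau+\Delta)\nabla\Rm = \Rm\ast\nabla\Rm$ in the $g$- and $\gt$-gauges, then rewriting $\tilde\Delta-\Delta$ by substituting $\gt^{-1}\to g^{-1}-(\gt^{-1}-g^{-1})$ and $\delt\to\nabla - V$, and finally splitting $\Rm\ast_g\Rm - \Rmt\ast_{\gt}\Rmt$ into $S,\Rmt$, and $U$-carrying pieces by inserting and removing factors of $g^{-1}$ and $\gt^{-1}$ -- are exactly the ones that generate the schematic $\ast$-terms in \eqref{eq:sev}--\eqref{eq:wev}. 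Your treatment of the ODE components is also right, and in particular the observation that $\pdtau U_{ij}=2S^l_{lij}$ requires no $U$-correction precisely because $\Rc$ is a pure trace of the $(3,1)$-tensor $\Rm$ is the key reason no $r_c^{-2}|U|$ term appears in the first ODE inequality of Proposition \ref{prop:pdeode}. The splitting $\pdtau W = \nabla(\pdtau V) + [\pdtau,\nabla]V$, with $[\pdtau,\nabla]\sim(\pdtau\Gamma)\ast = (\nabla\Rc)\ast$, together with the decomposition $\nabla\Rc = (\text{trace of }T) + (\text{trace of }\delt\Rmt)$, is indeed where the $T\ast V$ and $\delt\Rmt\ast V$ terms in \eqref{eq:wev} originate. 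One small slip: with $U = g-\gt$ the correct identity is $\gt^{-1}-g^{-1}=\gt^{-1}Ug^{-1}$ (no minus sign), but since all equations are only asserted up to the $\ast$-schematic this has no effect.
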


\subsection{A coupled system of inequalities}

The key feature of equations \eqref{eq:sev} - \eqref{eq:wev} is that each term on the right-hand side contains
at least one factor of a (possibly contracted) component of either $\Xb$, $\nabla\Xb$, or $\Yb$. Our
assumptions guarantee that the other factors in each term will be at least be uniformly bounded on $\Ec_{\rad_0}^{1}$.
Thus using the Cauchy-Schwarz inequality, we can organize the evolution equations for $\Xb$ and $\Yb$
into the following closed system of inequalities.
  
\begin{proposition}
\label{prop:pdeode}
There exists $N>0$, depending only on $n$, $K$ and $\widetilde{K}$, such that $S$, $T$, $U$, $V$, and $W$
satisfy
\begin{equation}\label{eq:stuvwbounds}
  \sup_{\Ec_{\rad_0}^1}\left\{|S| + |T| + |\nabla S| + |\nabla T| + |U| + |V| + |W|\right\} \leq Nr_c^{-2}
\end{equation}
and the coupled system of inequalities
\begin{align}
\begin{split}\label{eq:pdedetail}
  \left|\pd{S}{\tau} + \Delta S\right| &\leq Nr_c^{-2}\left(|S| + |U| +|V| + |W|\right),\\
  \left|\pd{T}{\tau} + \Delta T\right| &\leq Nr_c^{-2}\left(|S| + |T| + |U| + |V| + |W|\right),
\end{split}
\end{align}
and
\begin{align}
\begin{split}\label{eq:odedetail}
  \left|\pd{U}{\tau}\right| &\leq N|S|, \quad
  \left|\pd{V}{\tau}\right| \leq N|T| + Nr_c^{-2}|U|,\\ 
\left|\pd{W}{\tau}\right| &\leq N|\nabla T| + Nr_c^{-2}\left(|U| + |V|\right).
\end{split}
\end{align}
In particular $\Xb = S\oplus T$ and $\Yb = U\oplus V\oplus W$ satisfy
\begin{align}
\begin{split}
\label{eq:pdeode}
\left|\pd{\Xb}{\tau} + \Delta \Xb\right| &\leq Nr_c^{-2}\left(|\Xb|  + |\Yb|\right)\\
\left|\pd{\Yb}{\tau}\right| &\leq N\left(|\Xb| + |\nabla\Xb|\right) + Nr_c^{-2}|\Yb|
\end{split}
\end{align}
for $N$ sufficiently large on $\Ec_{\rad_0}^1$.
\end{proposition}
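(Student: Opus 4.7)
The proposition follows by direct substitution into the evolution equations of Lemma~\ref{lem:evolution}, combined with the hypothesized curvature decay \eqref{eq:curvdecay} applied separately to $g$ and $\gt$. The key structural observation is that since $g(0) = \gt(0) = g_c$, the ODE components $U$, $V$, $W$ vanish at $\tau = 0$, which lets us bootstrap their pointwise decay from the curvature bounds.

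First I would establish \eqref{eq:stuvwbounds}. The estimates on $|S|$, $|T|$, $|\nabla S|$, $|\nabla T|$ are immediate from the triangle inequality and \eqref{eq:curvdecay}, since both $|\nabla^{(m)}\Rm|_g$ and $|\delt^{(m)}\Rmt|_{\gt}$ are bounded by a multiple of $r_c^{-(m+2)} \leq r_c^{-2}$ on $\Ec_{\rad_0}^1$, using $\rad_0 \geq 1$ and the uniform equivalence of $g(\tau)$ and $\gt(\tau)$ (itself a consequence of bounded curvature together with the endpoint condition $g(0) = \gt(0) = g_c$). For $U$, $V$, $W$, integrate the ODEs \eqref{eq:uev}--\eqref{eq:wev} from $\tau = 0$: equation \eqref{eq:uev} gives $|U(\tau)| \leq \int_0^\tau 2|S|\,d\sigma \leq Nr_c^{-2}$; feeding this into \eqref{eq:vev} together with $|\delt\widetilde{\Rc}| \leq Nr_c^{-3}$ yields $|V| \leq Nr_c^{-2}$; and \eqref{eq:wev} finally produces $|W| \leq Nr_c^{-2}$.

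Next I would verify \eqref{eq:pdedetail} and \eqref{eq:odedetail} by inspecting each term in Lemma~\ref{lem:evolution}. Every term on the right-hand sides of \eqref{eq:sev} and \eqref{eq:tev} contains exactly one factor drawn from $\{S, T, U, V, W\}$; the remaining factors are either $\gt^{-1}$ (bounded) or $\delt^{(k)}\Rmt$ with $k \leq 3$ (bounded by $N r_c^{-(k+2)} \leq Nr_c^{-2}$). For the genuinely quadratic terms such as $S \ast S$ or $\Rmt \ast V \ast V$, one of the factors is absorbed using the $L^\infty$ bound \eqref{eq:stuvwbounds} just established. The estimates \eqref{eq:odedetail} follow immediately from \eqref{eq:uev}--\eqref{eq:wev}, with the only nontrivial coefficient being $|\delt\widetilde{\Rc}| \leq Nr_c^{-3} \leq Nr_c^{-2}$ appearing in $\partial_\tau V$. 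The combined bound \eqref{eq:pdeode} follows by setting $\Xb = S \oplus T$, $\Yb = U \oplus V \oplus W$, and noting that $|\nabla \Xb|$ controls both $|\nabla S|$ and $|\nabla T|$. I do not anticipate a substantive obstacle; the argument is essentially bookkeeping, and the only point requiring care is the aforementioned uniform equivalence of $g(\tau)$, $\gt(\tau)$, and $g_c$, which is what permits a single constant $N$ to serve independently of $\tau$ throughout $\Ec_{\rad_0}^1$.
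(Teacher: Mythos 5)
Your proposal is correct and follows essentially the same route as the paper: triangle inequality plus the curvature decay \eqref{eq:curvdecay} for the PDE components, integration of the $\tau$-derivatives from the shared initial condition $g(0)=\gt(0)=g_c$ for the ODE components, uniform equivalence of the metrics to pass between the $g$- and $\gt$-norms, and term-by-term inspection of Lemma~\ref{lem:evolution} to read off the coefficients in \eqref{eq:pdedetail}--\eqref{eq:odedetail}. One small tidying remark: since \eqref{eq:curvdecay} controls $|\delt^{(m)}\Rmt|$ rather than $|\nabla^{(m)}\Rmt|$, the bounds on $|\nabla S|$ and $|\nabla T|$ are not quite ``immediate from the triangle inequality'' but require the bound $|V|\leq Nr_c^{-2}$ you establish just afterward (via $\nabla S = T - V\ast\Rmt$ and $\nabla T = \nabla\nabla\Rm - \delt\delt\Rmt - V\ast\delt\Rmt$), so the clean bootstrap order is $S,T \Rightarrow U \Rightarrow V \Rightarrow \nabla S,\nabla T \Rightarrow W$; this is a reordering of your exposition, not a gap in the argument.
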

\begin{proof}
The argument goes essentially as in Proposition 2.1 of \cite{KotschwarBackwardsUniqueness}.
By \eqref{eq:curvdecay},
we know the derivatives of the curvature tensors of both solutions have at least quadratic
decay relative to $r_c$, so the coefficients of the form $\delt^{(m)}\Rmt$ and the extra-linear
factors of $S$ and $T$ in equations
\eqref{eq:sev} -- \eqref{eq:wev} have at least quadratic decay. (Note that the curvature bounds, together with the
fact that $g$ and $\gt$ agree identically on $E_{\rad_0}\times\{0\}$ in particular imply that the metrics
are uniformly equivalent, so that the bounds on $\widetilde{\nabla}^{(k)}\Rmt$ are also valid
in the $g(\tau)$-norm.) The same goes
for the extra-linear factors of  $U$, $V$, $W$, since, just as in \cite{KotschwarBackwardsUniqueness},
they can be estimated at each fixed $x$ by simply integrating their $\tau$-derivatives, and these
are controlled in turn by the pointwise values of $|\nabla^{(m)}\Rm|$ and $|\widetilde{\nabla}^{(m)}\Rmt|$. On the other hand,
not every term in the evolution equations for $U$, $V$, and $W$ has a coefficient with quadratic decay,
and so in \eqref{eq:pdeode} the coefficient of $|\Xb|$ and $|\nabla\Xb|$ in the second equation
is merely constant.
\end{proof}
 

\section{Carleman estimates to imply backwards uniqueness}
\label{sec:carleman1}
The key technical components which we will need to prove Theorem \ref{thm:bu} are two pairs of Carleman estimates.
In this section, we establish the first of these, the pair
which ultimately will imply the vanishing of $\Xb$ and $\Yb$. 
A model for the sort of thing we are after is estimate (1.4) of \cite{EscauriazaSereginSverak} (cf. Proposition 3.5 in \cite{Wang}),
which states that, for all $R > 0$, there is a constant $\alpha^* = \alpha^*(R, n)$ such
that 
\begin{align*}
  &\|e^{\alpha(T-\tau)(|x|-R)+|x|^2}u\|_{L^2(Q_{R, T})} +  \|e^{\alpha(T-\tau)(|x|-R)+|x|^2}\nabla u\|_{L^2(Q_{R, T})}\\
  &\qquad\leq \|e^{\alpha(T-\tau)(|x|-R)+|x|^2}(\partial_{\tau} + \Delta) u\|_{L^2(Q_{R, T})}
   +\|e^{|x|^2}\nabla u(\cdot, T)\|_{L^2(\RR^n\setminus B_R(0))}
\end{align*}
for all $\alpha \geq \alpha^*$ and $u \in C_c^{\infty}(Q_{\rad, T})$ satisfying $u(\cdot, 0) \equiv 0$. Here $Q_{R, T} \dfn
(\RR^n\setminus B_R(0))\times [0, T]$.
We devote most of this section to proving a generalization of this result
applicable to the components of the PDE portion of \eqref{eq:pdeode}, and then prove a compatible ``Carleman-type'' estimate 
for the ODE portion; these estimates are contained in Proposition
\ref{prop:systemcarlemanineq} below.

\subsection{Notation and standing assumptions} \label{ssec:standingassumptions}
It will be convenient to perform our calculations relative to the metric $g = g(\tau)$ from Theorem \ref{thm:bu} and its Levi-Civita connection. Thus,
\emph{in this section and the next we will operate under the standing assumption
that $\rad_0\geq 1$ and $0< \tau_0 \leq 1$ are given, and $g$ and its potential $f = f(\tau)$
satisfy  \eqref{eq:brf} -- \eqref{eq:fid1} for some constant $K_0$, relative to the regular cone $(E_0, g_c)$.} 
In most places, we will suppress the dependency of the norms and connections on $g$, and simply
write $|\cdot| = |\cdot|_{g(\tau)}$, $\nabla = \nabla_{g(\tau)}$,
and $d\mu = d\mu_{g(\tau)}$. We will continue to use $r_c(x)$ to denote the radial distance in the conical metric $g_c$,
and use $A_0 \dfn \operatorname{vol}_{g_{\Sigma}}(\Sigma)$ for the area (relative to the conical metric $g_c$) of the
cross-section of $E_0$ at distance one from the vertex.  

Also, for the next two sections, $\mathcal{Z} = T^{\kappa}_{\nu}(E_{\rad_0})$ will denote a generic tensor bundle over $E_{\rad_0}$.
Most of our constants will depend on some combination of the ``background'' parameters $n$, $A_0$, $K_0$, $\kappa$ and $\nu$;
for completeness, we add that we will say that a constant depends on $K_0$ only if it depends on $\max\{K_0, 1\}$ (and similarly for $A_0$, $\kappa$ and $\nu$).

\subsection{A divergence identity}
Both of the primary estimates \eqref{eq:pdecarlemanineq} and \eqref{eq:pdecarleman2}
arise from the following divergence identity, which generalizes Lemma 1 of \cite{EscauriazaSereginSverak} and 
Lemma 3.2 of \cite{Wang} to time-dependent backwards-heat operators acting on sections of tensor bundles.
Here the Laplacian on $\mathcal{Z}$ is defined by $\Delta Z \dfn g^{ij}\nabla_i\nabla_j Z$. We will use
  $F$ and $G$ to denote arbitrary smooth functions on $\Ec_{\rad_0}^{\tau_0}$ with $G > 0$,
and write $\phi \dfn \log G$.

By analogy with \cite{EscauriazaSereginSverak}, we then consider the operators
\begin{equation}\label{eq:asdef}
    \mcA \dfn \pdtau - \nabla_{\nabla\phi} + \frac{F}{2}\Id, 
  \quad\mbox{and}\quad \mcS\dfn \Delta + \nabla_{\nabla\phi} - \frac{F}{2}\Id,
\end{equation}
acting on $Z\in C^{\infty}(\mathcal{Z}\times[0, \tau_0])$.
Unlike their counterparts in \cite{EscauriazaSereginSverak}, $\mcA$ and $\mcS$ will not be quite 
antisymmetric and symmetric, respectively, in $L^2(Gd\mu\,d\tau)$, but will nevertheless be close enough to being so
that we may prove
a useful perturbation of the formula in that reference. The proof of the identity below is a straightforward
if somewhat lengthy verification.

\begin{lemma}\label{lem:dividentity1} The following identity holds on $\Ec_{\rad_0}^{\tau_0}$
for all $Z\in C^{\infty}(\mathcal{Z}\times[0, \tau_0])$ and all smooth $F$ and $G > 0$:
\begin{align}\label{eq:dividentity1}
\begin{split}
  &\nabla_i\bigg\{2\left\langle \pd{Z}{\tau}, \nabla_i Z\right\rangle G + |\nabla Z|^2\nabla_i G
  -2\left\langle\nabla_{\nabla G}Z, \nabla_i Z\right\rangle + \frac{FG}{2}\nabla_i|Z|^2\\
  &\phantom{=} \;+ \frac{1}{2}\left(F\nabla_i G - G\nabla_i F\right)|Z|^2\bigg\}\,d\mu
	 -\pdtau\bigg\{\left(|\nabla Z|^2 + \frac{F}{2}|Z|^2\right)G\,d\mu\bigg\}\\
   &= \bigg\{2\left\langle \mcA Z, \left(\pdtau + \Delta\right)Z\right\rangle G - 2\left|\mcA Z\right|^2G
    -\frac{1}{2}\left(\pd{F}{\tau} + \Delta F\right)|Z|^2G \\
  &\phantom{=} \;+\left(F - G^{-1}\left(\pd{G}{\tau} 
      - \Delta G + RG\right)\right)\left(|\nabla Z|^2 +\frac{F}{2}|Z|^2\right)G - \frac{FG}{2}\pd{g}{\tau}(Z, Z) \\
     &\phantom{=}\; 
    - 2\nabla_i\nabla_j\phi\langle \nabla_iZ, \nabla_jZ\rangle G -\pd{g}{\tau}(\nabla Z, \nabla Z)G 
+ 2E(Z, \nabla Z)G\bigg\}\,d\mu,
\end{split}
\end{align}
where $\pd{g}{\tau}$ represents the $\tau$-derivative of the metrics induced by $g$
on $\mathcal{Z}$ and $T^*(E_{\rad_0})\otimes \mathcal{Z}$, 
$E(Z, \nabla Z)$ denotes the sum of commutators 
\begin{align}
\begin{split}\label{eq:edef}
  E(Z, \nabla Z) &\dfn \left\langle \left[\nabla_i, \pdtau\right]Z, \nabla_i Z\right\rangle -
      \left\langle \left[\nabla_i, \nabla_j\right]Z, \nabla_i Z\right\rangle\nabla_j\phi\\
     &=g^{qm}
\left(\nabla_{i}R_{pm} + \nabla_pR_{im} - \nabla_mR_{ip}
      + R_{pmi}^j\nabla_j\phi\right)\left\langle \Theta^p_qZ, \nabla_iZ\right\rangle,
\end{split}
\end{align}
and $\Theta^p_q$ is the operator
\begin{align*}
    \Theta^p_q(Z_{\alpha}^{\beta}) &= \delta^p_{\alpha_1} Z_{q\alpha_2\cdots\alpha_\nu}^{\beta_1\beta_2\cdots\beta_\kappa}
	+ \delta^p_{\alpha_2}Z_{\alpha_1q\alpha_{3}\cdots\alpha_\nu}^{\beta_1\beta_2\cdots\beta_\kappa} + \cdots +
      \delta^p_{\alpha_\nu}Z_{\alpha_1\alpha_2\cdots q}^{\beta_1\beta_2\cdots\beta_\kappa}\\
      &\phantom{=}-\delta_q^{\beta_1} Z_{\alpha_1\alpha_2\cdots\alpha_\nu}^{p\beta_2\cdots\beta_\kappa}
	- \delta_q^{\beta_2} Z_{\alpha_1\alpha_2\cdots\alpha_\nu}^{\beta_1 p \cdots\beta_\kappa} - \cdots -
      \delta_q^{\beta_\kappa} Z_{\alpha_1\alpha_2\cdots\alpha_\nu}^{\beta_1\beta_2\cdots p},
\end{align*}
i.e., $\Theta^p_qZ_{i} = \delta^p_iZ_{q}$, $\Theta^p_qZ_{ij}^k = \delta^p_iZ_{qj}^k +\delta^p_jZ_{iq}^k 
      -\delta^k_qZ^{p}_{ij}$, etc.
\end{lemma}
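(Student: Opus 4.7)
The identity \eqref{eq:dividentity1} is a time-dependent, tensor-bundle generalization of the classical Rellich--Ne\v{c}as/Carleman commutator identity used in \cite{EscauriazaSereginSverak} and \cite{Wang}. The organizing principle is the splitting $\pdtau + \Delta = \mcA + \mcS$, which yields
\[
2\langle \mcA Z, (\pdtau + \Delta)Z\rangle G - 2|\mcA Z|^2 G \,=\, 2\langle \mcA Z, \mcS Z\rangle G.
\]
Thus the whole lemma amounts to expressing the pointwise quantity $2\langle \mcA Z, \mcS Z\rangle G$ as a spatial divergence plus a $-\pdtau$ of the natural energy density $(|\nabla Z|^2 + (F/2)|Z|^2)G\,d\mu$, plus explicit bulk corrections that keep track of (i) the non-flatness of $g$ and (ii) its $\tau$-dependence.

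My plan is to verify the identity by a direct computation in three steps. First, I would expand $\pdtau\{(|\nabla Z|^2 + (F/2)|Z|^2)G\,d\mu\}$ using the Leibniz rule. This brings in $\pdtau Z$, the commutator $[\pdtau, \nabla]Z$ (which, under $\pdtau g = 2\Rc$, is governed by the Ricci-flow variation $\pdtau \Gam^k_{ij} = \nabla_i R^k_j + \nabla_j R^k_i - g^{km}\nabla_m R_{ij}$), the derivatives of the fiber metrics on $\mc{Z}$ and on $T^*(E_{\rad_0})\otimes \mc{Z}$ (which produce the terms $-(FG/2)\pdtau g(Z,Z)$ and $-\pdtau g(\nabla Z,\nabla Z)G$), the factor $\pdtau G = G\,\pdtau \phi$, and $\pdtau d\mu = R\,d\mu$; the last of these combines with $\pdtau G$ and $-\Delta G$ to produce the coefficient $F - G^{-1}(\pdtau G - \Delta G + RG)$ that multiplies the energy density on the right-hand side. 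Second, I would expand the spatial divergence $\nabla_i\{\cdots\}$ on the left. The two delicate maneuvers are: (a) interchanging $\nabla_i$ and $\pdtau$ in the term $2\langle \pdtau Z, \nabla_i Z\rangle G$, which produces the $\nabla\Rc$-piece of $E(Z,\nabla Z)$ via $[\nabla_i,\pdtau]Z$; and (b) commuting the two covariant derivatives in $-2\langle \nabla_{\nabla G}Z, \nabla_i Z\rangle$, which via $[\nabla_i,\nabla_j]Z = R_{ij}{}^{p}{}_{q}\Theta^p_q Z$ contributes both the $R_{pmi}^{\,j}\nabla_j\phi$-piece of $E(Z,\nabla Z)$ and the Hessian term $-2\nabla_i\nabla_j\phi\langle\nabla_iZ,\nabla_jZ\rangle G$. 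Third, I would subtract the results of the first two steps and regroup. The principal cancellations mimic the flat Euclidean model: the $2\langle \mcA Z, \Delta Z\rangle G$ piece absorbs the spatial divergences of $\langle \pdtau Z,\nabla Z\rangle G$ and $\langle \nabla_{\nabla \phi}Z,\nabla Z\rangle G$, while the cross-terms involving $F$ collapse into the factor $-(1/2)(\pdtau F + \Delta F)|Z|^2 G$ after a further integration by parts in space that turns $\nabla(FG)$-type terms into $\Delta F$.

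The main obstacle---in fact the only nontrivial piece---is the bookkeeping of commutators in the general tensor-bundle setting. Because $Z$ is a section of $\mc{Z} = T^\kappa_\nu(E_{\rad_0})$ with $\kappa$ upper and $\nu$ lower indices, both $[\pdtau, \nabla_i]Z$ and $[\nabla_i,\nabla_j]Z$ produce curvature-type expressions acting slot-by-slot on $Z$, and the correct way to write the sum is via the operator $\Theta^p_q$ defined in the statement. Once each commutator is reduced to $\Theta^p_q$-form, matching the three Ricci-derivative terms $\nabla_iR_{pm}+\nabla_pR_{im}-\nabla_mR_{ip}$ against the Ricci-flow $\pdtau \Gam$-formula and the fourth term $R_{pmi}^{\,j}\nabla_j\phi$ against the spatial commutator with $\nabla_{\nabla\phi}$ is a purely mechanical, though sign- and contraction-sensitive, verification. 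After this collation the two sides of \eqref{eq:dividentity1} agree identically, and no further structural input (e.g., from the soliton equations or the specific choice of $F$, $G$) is needed.
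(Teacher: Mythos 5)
Your proposal is correct and takes essentially the same approach the paper indicates: the paper offers no proof beyond the remark that the identity follows by ``a straightforward if somewhat lengthy verification,'' and that is precisely the direct expansion you describe. The organizing decomposition $\pdtau + \Delta = \mcA + \mcS$ reducing the boxed terms to $2\langle \mcA Z, \mcS Z\rangle G$, the bookkeeping $\pdtau d\mu = R\,d\mu$ and $\pdtau G = G\,\pdtau\phi$ producing the coefficient $F - G^{-1}(\pdtau G - \Delta G + RG)$, the $\pdtau g$-contributions from the induced fiber metrics, and the identification of the two constituents of $E(Z,\nabla Z)$ with $[\nabla_i,\pdtau]$ (via the Ricci-flow $\pdtau\Gamma$ formula) and $[\nabla_i,\nabla_j]$ (via the $\Theta^p_q$-indexed curvature action) are exactly the structural points one must track; the only small imprecision is calling the step that yields $\Delta F$ an ``integration by parts in space,'' since the identity is pointwise at the density level and the $\Delta F$ simply arises from applying $\nabla_i$ to the $-\tfrac{1}{2}G\nabla_i F|Z|^2$ term inside the divergence.
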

\begin{remark}\label{rem:commutator}
We note for later an important observation regarding \eqref{eq:edef}.  In our applications below,
we will have $\nabla \phi = \Upsilon\nabla f$ for some function $\Upsilon$ and since
\begin{equation}\label{eq:nablarc}
    \nabla_{i}R_{jk} - \nabla_{j}R_{ik} = R_{ijk}^p\nabla_pf,
\end{equation}s
for $\tau\in (0, \tau_0]$,
owing to \eqref{eq:fid1}, we have
\begin{equation}\label{eq:eequation1}
 E(Z, \nabla Z) = \left(\nabla_{i}R_{pq} + (1+\Upsilon)(\nabla_pR_{iq} -\nabla_iR_{pq}) 
      \right)\langle \Theta^p_qZ, \nabla_iZ\rangle,
\end{equation}
so that, for some $C = C(n, \kappa, \nu)$, 
\begin{equation}\label{eq:eequation2}
  |E(Z, \nabla Z)| \leq C|\nabla \Rc|(|\nabla Z|^2 + (1+ \Upsilon^2)|Z|^2)
\end{equation}
on all of $\Ec_{\rad_0}^{\tau_0}$,
i.e., we may control $E(Z, \nabla Z)$ by $\Upsilon$, $\Rc$, $\nabla\Rc$, $Z$ and $\nabla Z$ alone,
and eliminate the dependency of the estimate on $\nabla \phi$.  
\end{remark}

\subsection{A weighted $L^2$-inequality for the operator $\partial_{\tau}+ \Delta$}

When $Z(\cdot, \tau)$ has compact support in $E_{\rad_0}$ for each $\tau$ and vanishes at $\tau = 0$, the above identity can be integrated
and used to control  $|Z|$ and $|\nabla Z|$  by $|(\partial_{\tau} + \Delta)Z|$ in a suitably weighted 
$L^2$-sense.  Choosing $F = G^{-1}(\partial_{\tau}G - \Delta G + RG)$ to obtain some cancellation
of terms on the right-hand side of \eqref{eq:dividentity1}, 
integrating over $\mc{E}_{\mc{R}_0}^{\tau_0}$ and using the Cauchy-Schwarz inequality, we obtain 
the following analog of Lemma 2 in \cite{EscauriazaSereginSverak}.
\begin{lemma}\label{lem:pdecarlemanineq}
  There exists a constant $N = N(n, \kappa, \nu, K_0)$ such that if 
$Z\in C^{\infty}(\mathcal{Z}\times[0, \tau_0])$ is compactly supported in $E_{\rad_0}$ for each $\tau$
and satisfies $Z(\cdot, 0)= 0$,
then, for any smooth $G > 0$, we have
\begin{align}\label{eq:pdecarlemanineq}
\begin{split} 
&\frac{1}{2}\iint_{\Ec_{\rad_0}^{\tau_0}} \left|\pd{Z}{\tau} + \Delta Z\right|^2 G\,d\mu\,d\tau
    +\int_{E_{\rad_0}\times\{\tau_0\}}\left(|\nabla Z|^2 + \frac{F}{2}|Z|^2\right)G\,d\mu\\
&\qquad\qquad \geq\iint_{\Ec_{\rad_0}^{\tau_0}} \big(Q_1(\nabla Z,\nabla Z) + Q_2(Z, Z) -2 E( Z, \nabla Z)\big) G\,d\mu\,d\tau
\end{split}
\end{align}
where  $E(Z, \nabla Z)$ is given by \eqref{eq:edef}, $F \dfn G^{-1}(\partial_{\tau}G- \Delta G) + R$, and
\begin{align}
\begin{split}\label{eq:q1q2def}
  Q_1(\nabla Z, \nabla Z) &= 2(\nabla_i\nabla_j\phi)\langle\nabla_i Z, \nabla_j Z\rangle -\frac{N}{r_c^2}|\nabla Z|^2\\
  Q_2(Z, Z) &= \frac{1}{2}\left(\pd{F}{\tau} + \Delta F\right)|Z|^2 - \frac{N|F|}{r_c^2}|Z|^2.
\end{split}
\end{align}
\end{lemma}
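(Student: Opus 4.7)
The plan is to obtain \eqref{eq:pdecarlemanineq} by integrating the divergence identity \eqref{eq:dividentity1} over $\Ec_{\rad_0}^{\tau_0}$ and rearranging. First, applying the divergence theorem on $E_{\rad_0}$ kills the spatial divergence contribution on the left side of \eqref{eq:dividentity1}, since $Z(\cdot,\tau)$ has compact support there for each $\tau$. Integrating the remaining $\pdtau$ term from $\tau = 0$ to $\tau = \tau_0$ and invoking the hypothesis $Z(\cdot, 0) = 0$ transfers the energy density $(|\nabla Z|^2 + \tfrac{F}{2}|Z|^2)G$ evaluated at $\tau_0$ to the left side of \eqref{eq:pdecarlemanineq}.

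Next, the choice $F \dfn G^{-1}(\pdtau G - \Delta G) + R$ is engineered precisely so that the coefficient $F - G^{-1}(\pdtau G - \Delta G + RG)$ of $(|\nabla Z|^2 + \tfrac{F}{2}|Z|^2)G$ on the right side of \eqref{eq:dividentity1} vanishes identically. The first two terms on that side, $2\langle \mcA Z,(\pdtau + \Delta)Z\rangle G - 2|\mcA Z|^2 G$, are then handled by Cauchy--Schwarz: with weight $\eps = 2$ one has $2\langle \mcA Z,(\pdtau + \Delta)Z\rangle \leq 2|\mcA Z|^2 + \tfrac{1}{2}|(\pdtau + \Delta)Z|^2$, which exactly cancels the $-2|\mcA Z|^2 G$ contribution and produces the factor $1/2$ in front of the first term on the left of \eqref{eq:pdecarlemanineq}.

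The final step is to absorb the two error terms $-\tfrac{FG}{2}\pdtau g(Z, Z)$ and $-\pdtau g(\nabla Z, \nabla Z)G$, which arise from the time-dependence of the induced bundle metrics on $\mathcal{Z}$ and $T^*(E_{\rad_0})\otimes \mathcal{Z}$. Since $\pdtau g = 2\Rc(g)$ along the backwards Ricci flow \eqref{eq:brf}, the curvature decay estimate \eqref{eq:curvdecay} furnishes $|\pdtau g| \leq CK_0/r_c^2$ on $\Ec_{\rad_0}^{\tau_0}$, with $C$ depending only on $n$, $\kappa$, and $\nu$ through the pointwise action on tensors. These contributions can therefore be absorbed into the $-(N/r_c^2)|\nabla Z|^2$ and $-(N|F|/r_c^2)|Z|^2$ pieces of $Q_1$ and $Q_2$ by enlarging $N$ as needed. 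The remaining terms $-2\nabla_i\nabla_j\phi\langle \nabla_iZ,\nabla_jZ\rangle G$ and $-\tfrac{1}{2}(\pdtau F + \Delta F)|Z|^2 G$ are transported directly into $Q_1$ and $Q_2$, and the commutator $2E(Z,\nabla Z)G$ is kept explicit on the right side of \eqref{eq:pdecarlemanineq}. No step of this argument seems to pose a substantive obstacle; the real work was already performed in establishing the divergence identity \eqref{eq:dividentity1}, and what remains is essentially bookkeeping together with the curvature decay to dispose of the $\pdtau g$ terms.
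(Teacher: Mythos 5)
Your argument is correct and matches the paper's approach exactly: the paper gives only a one-sentence sketch just before the lemma ("Choosing $F = G^{-1}(\partial_{\tau}G - \Delta G + RG)$ to obtain some cancellation of terms on the right-hand side of \eqref{eq:dividentity1}, integrating over $\mc{E}_{\mc{R}_0}^{\tau_0}$ and using the Cauchy-Schwarz inequality"), and your proposal supplies precisely the bookkeeping it is alluding to, including the correct absorption of the $\partial_\tau g = 2\Rc$ error terms via the quadratic curvature decay \eqref{eq:curvdecay}.
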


\subsection{A weighted $L^2$-inequality for the ODE component.}

Next, we establish a matching $L^2$-inequality for the ODE component of the system; its proof is essentially trivial.

\begin{lemma}
\label{lem:odecarlemanineq}
 There exists a constant $N = N(n, \kappa, \nu, K_0)$ such that if $Z\in C^{\infty}(\mathcal{Z}\times[0, \tau_0])$ is compactly supported in $E_{\rad_0}$ for each $\tau$
and satisfies $Z(\cdot, 0) = 0$, then, for all smooth $G >0$,
\begin{equation}
\label{eq:odecarlemanineq}
-\iint_{\Ec_{\rad_0}^{\tau_0}} \left(N+\pd{\phi}{\tau}\right)|Z|^2G\, d\mu\, d\tau\le
\iint_{\Ec_{\rad_0}^{\tau_0}} \abs{\frac{\partial Z}{\partial \tau}}^2 G\, d\mu\, d\tau.
\end{equation}
\end{lemma}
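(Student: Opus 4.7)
The plan is to derive \eqref{eq:odecarlemanineq} by a single integration by parts in time. Since $Z(\cdot,\tau)$ is compactly supported in $E_{\rad_0}$, no spatial boundary terms arise, so the argument never requires an integration by parts in space; in this respect it is simpler than the PDE case in Lemma \ref{lem:pdecarlemanineq}.

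First I would compute $\frac{d}{d\tau}\int_{E_{\rad_0}}|Z|^2 G\,d\mu$. Using $\partial_\tau G = G\,\partial_\tau\phi$, the backwards Ricci flow identity $\partial_\tau g = 2\Rc$ (which gives $\partial_\tau d\mu = R\,d\mu$), and the dependence of the fiber metric on $\mathcal{Z}$ on $g(\tau)$, one obtains
\begin{equation*}
\frac{d}{d\tau}\int_{E_{\rad_0}}|Z|^2 G\,d\mu
=\int_{E_{\rad_0}}\Big(2\langle\partial_\tau Z,Z\rangle+\partial_\tau\phi\cdot|Z|^2 + R|Z|^2 + h(Z,Z)\Big)G\,d\mu,
\end{equation*}
where $h(Z,Z)$ collects the contribution of the $\tau$-derivative of the induced inner product on $\mathcal{Z}$ and satisfies $|h(Z,Z)|\leq C|\Rc||Z|^2$ for some constant $C=C(n,\kappa,\nu)$. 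The curvature decay \eqref{eq:curvdecay} together with $\rad_0\geq 1$ gives the uniform pointwise bounds $|R|,|\Rc|\leq K_0$ on $\Ec_{\rad_0}^{\tau_0}$, so the sum $R|Z|^2+h(Z,Z)$ is controlled by $C'K_0|Z|^2$ for some $C'=C'(n,\kappa,\nu)$.

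Next, I would integrate this identity from $\tau=0$ to $\tau=\tau_0$, using $Z(\cdot,0)\equiv 0$ to kill the initial boundary term and discarding the non-negative terminal boundary term $\int_{E_{\rad_0}\times\{\tau_0\}}|Z|^2 G\,d\mu$, which only strengthens the resulting inequality in the direction we want. Applying Young's inequality in the form $|2\langle\partial_\tau Z,Z\rangle|\leq |\partial_\tau Z|^2+|Z|^2$ to the cross-term and absorbing the resulting $|Z|^2$-contributions into a single constant $N=N(n,\kappa,\nu,K_0)$ yields
\begin{equation*}
-\iint_{\Ec_{\rad_0}^{\tau_0}}\partial_\tau\phi\,|Z|^2 G\,d\mu\,d\tau
\leq \iint_{\Ec_{\rad_0}^{\tau_0}}|\partial_\tau Z|^2 G\,d\mu\,d\tau
+ N\iint_{\Ec_{\rad_0}^{\tau_0}}|Z|^2 G\,d\mu\,d\tau,
\end{equation*}
which is a rearrangement of \eqref{eq:odecarlemanineq}.

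As advertised in the statement, I do not anticipate any real obstacles: the whole argument is essentially a single time integration combined with Young's inequality, and the only minor bookkeeping is tracking the $\tau$-derivative of the bundle metric on $\mathcal{Z}$, whose contribution is uniformly bounded and is readily absorbed into $N$. The proof does not use any of the finer structure of $G$ (in particular, no spatial derivatives of $\phi$ enter) and works for any smooth positive weight.
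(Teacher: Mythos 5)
Your proof is correct and takes essentially the same approach as the paper: both differentiate the weighted quantity $|Z|^2G$ (or its integral) in $\tau$, integrate in time using $Z(\cdot,0)=0$ to drop the initial boundary term and positivity to discard the terminal one, and then apply Cauchy--Schwarz (Young's inequality) together with the curvature decay \eqref{eq:curvdecay} to control the lower-order terms coming from $\partial_\tau g$ and $\partial_\tau d\mu$.
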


\begin{proof}
Note that
\begin{equation}
\label{eq:tweightnorm}
\frac{\partial}{\partial \tau}\left(\abs{Z}^2G\right)=2\left\la \frac{\partial Z}{\partial \tau}, Z\right\ra G+\abs{Z}^2\frac{\partial G}{\partial\tau}+\pd{g}{\tau}(Z, Z)G.
\end{equation}
The inequality (\ref{eq:odecarlemanineq}) then follows upon integrating \eqref{eq:tweightnorm} over $\Ec_{\rad_0}^{\tau_0}$ and applying the Cauchy-Schwarz inequality
together with \eqref{eq:curvdecay}.
\end{proof}

\subsection{An approximately radial function}
\label{ssec:hdef}
Our next task is to construct a suitable weight function $G_1$ to substitute for $G$
in inequalities \eqref{eq:pdecarlemanineq} and \eqref{eq:odecarlemanineq}.  
As a first step we introduce the function $h: \Ec_{\rad_0}^{\tau_0} \to \RR$ defined  by 
\begin{equation}\label{eq:hdef}
 h(x, \tau) \dfn 
\left\{\begin{array}{ll}
  2\sqrt{\tau f(x, \tau)} &\mbox{for}\quad \tau > 0,\\
  r_c(x) &\mbox{for}\quad \tau = 0.
\end{array}\right.
\end{equation}
which will prove to be a useful approximation of the (conical) radial distance on our evolving solution. 
Observe first that $h \in C^{\infty}(\Ec_{\rad_0}^{\tau_0})$; indeed, using the asymptotics we have established for $f$
in Proposition \ref{prop:reduction}, 
$\lim_{\tau\searrow 0} h(x, \tau) = r_c(x)$ in every $C^k$-norm and satisfies
\begin{equation}\label{eq:h0prop}
  |\nabla h|^2(x, 0) = 1, \quad\mbox{and}\quad \nabla\nabla (h^2)(x, 0) = 2g(x, 0) = 2g_c(x)
\end{equation}
on $E_{\rad_0}$. Also, from \eqref{eq:fid0}, we see that 
\begin{equation}\label{eq:hrbounds}
  \frac{1}{2}r_c(x)\leq h(x, \tau)\leq 2r_c(x),
\end{equation}
on $\Ec_{\rad_0}^{\tau_0}$; in view of  \eqref{eq:curvdecay}, we consequently have the inequalities
\begin{equation}\label{eq:hcurvbounds} 
h\geq \frac{1}{2} \quad \mbox{and}\quad  
h^2(|\Rm| + |\nabla \Rm|)\leq CK_0
\end{equation}
on $\Ec_{\rad_0}^{\tau_0}$ for some universal constant $C$.  The identities \eqref{eq:fid0} and \eqref{eq:fid1} also directly imply
the following expressions for the derivatives of $h$ for $\tau > 0$.
\begin{lemma}\label{lem:hder}
 On $E_{\rad_0}\times (0, \tau_0]$, the derivatives of $h$ satisfy
\begin{align}
\label{eq:hspatial}
\nabla h &= \frac{2\tau}{h}\nabla f,  \quad h\nabla\nabla h = g - 2\tau\Rc - \nabla h\otimes\nabla h
\end{align}
and 
\begin{align}\label{eq:hspatial2}
 |\nabla h|^2 = 1 - \frac{4\tau^2 R}{h^2},\quad h \Delta h = n - 2\tau R - |\nabla h|^2, 
\end{align}
and
\begin{align}
\label{eq:pdtauh}
  \pd{h}{\tau} &= \frac{h}{2\tau}\left(1-|\nabla h|^2\right) = \frac{2\tau R}{h}.
\end{align}
\end{lemma}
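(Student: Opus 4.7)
The plan is to verify each identity by a direct computation starting from the defining relation $h^2 = 4\tau f$ and inserting the soliton identities from \eqref{eq:fid0}--\eqref{eq:fid1}. All five formulas follow mechanically from this, so the work is bookkeeping rather than analysis.

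First I would compute $\nabla h$ and $\nabla\nabla h$. Taking the spatial gradient of $h^2 = 4\tau f$ gives $2h\nabla h = 4\tau\nabla f$, which is the first formula in \eqref{eq:hspatial}. Applying $\nabla$ once more yields
\begin{equation*}
 2\nabla h\otimes\nabla h + 2h\nabla\nabla h = 4\tau\nabla\nabla f,
\end{equation*}
and the third equation in \eqref{eq:fid1} lets me substitute $2\tau\nabla\nabla f = g - 2\tau\Rc$, which produces the second identity in \eqref{eq:hspatial}.

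Next I would handle \eqref{eq:hspatial2}. Squaring $\nabla h = (2\tau/h)\nabla f$ gives $|\nabla h|^2 = (4\tau^2/h^2)|\nabla f|^2$, and the second identity in \eqref{eq:fid1}, rewritten as $\tau^2|\nabla f|^2 = \tau f - \tau^2 R = h^2/4 - \tau^2 R$, immediately produces $|\nabla h|^2 = 1 - 4\tau^2R/h^2$. Tracing the Hessian formula $h\nabla\nabla h = g - 2\tau\Rc - \nabla h\otimes\nabla h$ with respect to $g$ then gives $h\Delta h = n - 2\tau R - |\nabla h|^2$.

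Finally, for \eqref{eq:pdtauh} I would differentiate $h^2 = 4\tau f$ in $\tau$ to obtain $2h\,\partial_\tau h = 4\,\partial_\tau(\tau f) = 4\tau R$ by the first identity in \eqref{eq:fid1}, which yields $\partial_\tau h = 2\tau R/h$. The alternative expression $(h/2\tau)(1-|\nabla h|^2)$ follows by substituting the formula for $|\nabla h|^2$ just derived. There is no real obstacle: the only care needed is to remember that the soliton identities in \eqref{eq:fid1} carry the factor $\tau$ in front of $\Rc$ and $\nabla\nabla f$, reflecting that $g(\tau)$ is a soliton with constant $-1/(2t)$ rather than $1/2$.
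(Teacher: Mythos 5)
Your proof is correct and follows exactly the computation the paper intends: the paper simply asserts that the identities \eqref{eq:fid0}--\eqref{eq:fid1} "directly imply" the lemma, and your step-by-step differentiation of $h^2 = 4\tau f$ combined with the three soliton identities in \eqref{eq:fid1} is precisely that direct verification. No gaps.
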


Equation \eqref{eq:pdtauh} can be used to obtain a useful refinement of \eqref{eq:hrbounds}.
\begin{lemma}\label{lem:rhcomparison}
There exists a universal constant $C$ such that
\begin{equation}\label{eq:rhcomparison}
    |h(x, \tau) -r_{c}(x)| \leq \frac{CK_0\tau^2}{r_c^{3}(x)}.
\end{equation}
for all  $(x, \tau) \in \Ec_{\rad_0}^{\tau_0}$.
\end{lemma}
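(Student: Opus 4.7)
The plan is to obtain the estimate by integrating the $\tau$-derivative of $h$ from $\tau = 0$, using the identity $\partial h/\partial\tau = 2\tau R/h$ supplied by \eqref{eq:pdtauh} and bounding the integrand with the ingredients already assembled.

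First I would observe that $h$ extends smoothly to $\tau = 0$ with $h(x, 0) = r_c(x)$, so for any $(x, \tau)\in\mathcal{E}_{\mathcal{R}_0}^{\tau_0}$, the fundamental theorem of calculus gives
\begin{equation*}
  h(x, \tau) - r_c(x) = \int_0^{\tau} \pd{h}{\tau}(x, s)\,ds = \int_0^{\tau} \frac{2sR(x, s)}{h(x, s)}\,ds.
\end{equation*}
Next I would estimate the integrand pointwise: the curvature decay \eqref{eq:curvdecay} with $m = 0$ yields $|R|(x, s) \leq C|\Rm|(x, s) \leq CK_0/(r_c^2(x) + 1) \leq CK_0/r_c^2(x)$ for some universal $C = C(n)$, while \eqref{eq:hrbounds} gives $h(x, s) \geq r_c(x)/2$. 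Combining these two bounds shows that
\begin{equation*}
  \left|\frac{2sR(x, s)}{h(x, s)}\right| \leq \frac{4CK_0\, s}{r_c^3(x)}
\end{equation*}
on $\mathcal{E}_{\mathcal{R}_0}^{\tau_0}$.

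Finally, integrating this estimate from $0$ to $\tau$ yields
\begin{equation*}
  |h(x, \tau) - r_c(x)| \leq \int_0^{\tau} \frac{4CK_0\, s}{r_c^3(x)}\,ds = \frac{2CK_0\tau^2}{r_c^3(x)},
\end{equation*}
which is the claimed inequality after relabeling the constant. There is no real obstacle here; the statement is essentially a direct consequence of the formula \eqref{eq:pdtauh} for $\partial h/\partial\tau$ combined with the uniform quadratic curvature decay and the coarse two-sided comparison \eqref{eq:hrbounds} between $h$ and $r_c$ already established above.
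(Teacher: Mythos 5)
Your proof is correct and uses exactly the same ingredients as the paper's argument (equation \eqref{eq:pdtauh}, the quadratic curvature decay, and the two-sided comparison \eqref{eq:hrbounds}); the only cosmetic difference is that you integrate $\partial_\tau h = 2\tau R/h$ directly, whereas the paper integrates the equivalent identity $\partial_\tau(h^2) = 4\tau R$ to get $|h^2-r_c^2|\le CK_0\tau^2/r_c^2$ and then divides by $h+r_c$. Both routes are equally valid and yield the same bound.
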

\begin{proof}
  Fix an arbitrary $x\in E_{\rad_0}$ and integrate both sides of \eqref{eq:pdtauh} with respect to $\tau$. Using \eqref{eq:hcurvbounds} and
  that we have normalized to achieve $h(x, 0) = r_c(x)$, we obtain that
   $|h^2(x, \tau) - r^2_c(x)| \leq CK_0r_c^{-2}(x)\tau^2$, so 
\[
|h(x, \tau) - r_c(x)||h(x, \tau) + r_c(x)|\leq CK_0 r_c^{-2}(x)\tau^2.
\]
and the claim follows. 
\end{proof}

\subsection{A weight function of rapid growth}

With $h$ in hand, we now construct our weight function $G_1$.
We fix $\delta\in (0, 1)$ and define, for all $\alpha>0$, the function
\begin{equation}
\label{eq:G1def}
G_1 \dfn G_{1; \alpha, \tau_0}(x,\tau)=\exp\left[\alpha (\tau_0-\tau)h^{2-\delta}(x, \tau)+ h^2(x, \tau)\right]
\end{equation}
and, writing $\phi_1 \dfn \phi_{1, \alpha, \tau_0}\dfn\log G_1$, also define
\begin{equation}
\label{eq:Fdef}
F_1 \dfn F_{1; \alpha, \tau_0}\dfn \pd{\phi_1}{\tau} -\Delta \phi_1-|\nabla\phi_1|^2+ R.
\end{equation}
Using Lemma \ref{lem:hder} we may obtain expressions for the derivatives of $\phi_1$ up to second order. Eventually,
we will simply estimate away the terms involving curvature, but we must be reasonably precise about them
at this point, since we will later need to compute two additional derivatives of $\phi_1$ in order
to estimate the expression involving $F_1$ in \eqref{eq:pdecarlemanineq}.

\begin{lemma}\label{lem:phiest}
For any $\alpha$ and any $\tau_0$, $\delta \in (0, 1)$, $\phi_1 = \phi_{1; \alpha, \tau_0}$ satisfies 
\begin{align}
\begin{split}
\label{eq:phitau}
  \pd{\phi_1}{\tau} &= 4\tau R - \alpha h^{2-\delta}\left(1 - \frac{2(2-\delta)\tau(\tau_0 - \tau)R}{h^2}\right),
\end{split}\\
\begin{split}\label{eq:phigrad}
  \nabla \phi_1 &= \left(\alpha(2-\delta)(\tau_0 - \tau)h^{1-\delta} + 2 h\right)\nabla h, \quad\mbox{and}
\end{split}\\
\begin{split}\label{eq:phihess}
\nabla\nabla \phi_1 &= 2\left(g -2\tau\Rc\right)
    + \frac{\alpha(2-\delta)(\tau_0-\tau)}{h^{\delta}}\left(g-2\tau\Rc -\delta\nabla h\otimes \nabla h\right).
\end{split}
\end{align}
In particular, there exists  a constant $\rad_1 \geq \rad_0$ depending only on $n$, $\delta$, and $K_0$,
such that on $\Ec_{\rad_1}^{\tau_0}$
\begin{equation}\label{eq:phihessFest}
  \nabla\nabla \phi_{1} \geq g \quad\mbox{and} \quad 0\geq F_{1} \geq -N\left( 1+h^2 +\alpha h^{2-\delta} + 
    \alpha^2(\tau_0 -\tau)^2h^{2-2\delta}\right).
\end{equation}
for all $\alpha \geq 1$.  
\end{lemma}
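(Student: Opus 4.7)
The plan is to compute the three derivative formulas by plugging the definition $\phi_1 = \alpha(\tau_0-\tau)h^{2-\delta} + h^2$ into the identities of Lemma \ref{lem:hder} and then estimate away the curvature corrections using the pointwise bounds \eqref{eq:hcurvbounds} to obtain the two inequalities.

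For the identity \eqref{eq:phitau}, I would apply $\partial_\tau h = 2\tau R/h$ from \eqref{eq:pdtauh} to compute $\partial_\tau h^2 = 4\tau R$ and $\partial_\tau h^{2-\delta} = (2-\delta)h^{1-\delta}(2\tau R/h) = 2(2-\delta)\tau R/h^{\delta}$, then sum with the explicit $-\alpha h^{2-\delta}$ coming from the $\tau_0 - \tau$ prefactor. The gradient \eqref{eq:phigrad} is an immediate chain rule. For \eqref{eq:phihess} I would compute
\[
   \nabla\nabla h^2 = 2\nabla h \otimes \nabla h + 2h\nabla\nabla h = 2g - 4\tau\Rc,
\]
using the Hessian formula $h\nabla\nabla h = g - 2\tau\Rc - \nabla h \otimes \nabla h$ from \eqref{eq:hspatial}; similarly
\[
   \nabla\nabla h^{2-\delta} = (2-\delta)(1-\delta)h^{-\delta}\nabla h \otimes \nabla h + (2-\delta)h^{1-\delta}\nabla\nabla h = (2-\delta)h^{-\delta}\bigl(g - 2\tau\Rc - \delta\nabla h \otimes \nabla h\bigr),
\]
and then add the two contributions weighted by $\alpha(\tau_0 - \tau)$ and $1$ respectively.

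For the Hessian bound $\nabla\nabla\phi_1 \geq g$ on $\Ec_{\rad_1}^{\tau_0}$, I would use \eqref{eq:hcurvbounds} and $\tau \leq 1$ to bound $2\tau|\Rc| \leq CK_0/h^2$, which can be made as small as desired by taking $h$ large, so that $2(g - 2\tau\Rc) \geq (3/2)g$ on $E_{\rad_1}$ once $\rad_1$ is chosen sufficiently large depending on $K_0$. For the $\alpha$-term, I use $|\nabla h|^2 \leq 1$ from \eqref{eq:hspatial2} to get $\delta\nabla h \otimes \nabla h \leq \delta g$ and conclude $g - 2\tau\Rc - \delta\nabla h \otimes \nabla h \geq (1-\delta)g/2 \geq 0$ on $E_{\rad_1}$; the factor $\alpha(2-\delta)(\tau_0-\tau)h^{-\delta} \geq 0$ then makes the entire second term nonnegative, which is more than enough to absorb any deficit and conclude $\nabla\nabla\phi_1 \geq g$.

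The bound on $F_1$ is the most involved step, though still just bookkeeping. I would take the trace of \eqref{eq:phihess} to obtain
\[
   \Delta\phi_1 = 2n - 4\tau R + \alpha(2-\delta)(\tau_0-\tau)h^{-\delta}\bigl(n - 2\tau R - \delta|\nabla h|^2\bigr),
\]
square \eqref{eq:phigrad} to obtain $|\nabla\phi_1|^2 = \bigl(\alpha(2-\delta)(\tau_0-\tau)h^{1-\delta} + 2h\bigr)^2|\nabla h|^2$, and substitute these together with \eqref{eq:phitau} into the defining formula for $F_1$. The leading term is $-|\nabla\phi_1|^2$, whose expansion produces exactly the three negative quantities $-4h^2|\nabla h|^2$, $-4\alpha(2-\delta)(\tau_0-\tau)h^{2-\delta}|\nabla h|^2$, and $-\alpha^2(2-\delta)^2(\tau_0-\tau)^2 h^{2-2\delta}|\nabla h|^2$ appearing in the claimed lower bound (after using $|\nabla h|^2 \geq 1/2$ for $h \geq \rad_1$ from \eqref{eq:hspatial2} and \eqref{eq:hcurvbounds}). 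All remaining contributions, coming from $\partial_\tau\phi_1$, $-\Delta\phi_1$ and $R$, are bounded by $N(1 + \alpha h^{2-\delta} + h^2)$ where $N$ depends only on $n$, $\delta$ and $K_0$, using $|R|, |\tau R| \leq CK_0h^{-2}$. Comparing these bounded corrections with the dominant negative terms shows both that $F_1 \leq 0$ once $\rad_1$ is enlarged if necessary and that the desired lower bound on $F_1$ holds. The only mild subtlety is making sure the positive curvature-dependent terms inside $\partial_\tau\phi_1$ and $\Delta\phi_1$ (those carrying a factor of $R$ or $|\Rc|$) are absorbed using the $h^{-2}$ decay, which is straightforward provided $\rad_1$ is chosen large enough in terms of $n$, $\delta$, and $K_0$.
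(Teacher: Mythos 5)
Your computation of the three derivative formulas is correct and matches the paper's, and your overall strategy for both inequalities in \eqref{eq:phihessFest} — use the curvature decay to shrink the $\Rc$ and $R$ corrections to controllably small quantities on a sufficiently distant end — is exactly what the paper does. Two small points.

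First, the claim "$|\nabla h|^2 \le 1$ from \eqref{eq:hspatial2}" is not quite right: \eqref{eq:hspatial2} gives $|\nabla h|^2 = 1 - 4\tau^2 R/h^2$, and since no sign on $R$ is assumed, $|\nabla h|^2$ can exceed $1$. The fix is immediate — from \eqref{eq:hcurvbounds} one has $|\nabla h|^2 \le 1 + CK_0 h^{-4}$, so $\delta\,\nabla h\otimes\nabla h \le \delta(1+CK_0 h^{-4})\,g$, and for $\rad_1$ large the deficit is absorbed. The paper handles this point a bit more carefully by computing the eigenvalue of $\tfrac{1+\delta}{2}g - \delta\,\nabla h\otimes\nabla h$ in the $\nabla h$-direction explicitly and controlling it via the $4\delta\tau^2 R/h^2$ term, but the conclusion is the same.

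Second, for the upper bound $F_1\le 0$ you are doing more work than necessary: since $-|\nabla\phi_1|^2 \le 0$ always, it suffices (as the paper does) to note $\partial_\tau\phi_1 \le -\tfrac{\alpha}{2}h^{2-\delta}$ for $\alpha\ge 1$ and $\Delta\phi_1 \ge n$ after enlarging $\rad_1$, and then discard $-|\nabla\phi_1|^2$ entirely. Your more explicit expansion argument also works, as long as you keep straight that the lower bound on $|\nabla h|^2$ feeds into the upper bound on $F_1$ and vice versa, which your write-up slightly blurs. These are cosmetic; the proof is essentially the paper's.
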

\begin{proof}
 Equations \eqref{eq:phitau}, \eqref{eq:phigrad}, and \eqref{eq:phihess} follow easily from the identities for the corresponding
derivatives of $h$ in Lemma \ref{lem:hder}.  For the first inequality in \eqref{eq:phihessFest}, observe that, 
by \eqref{eq:curvdecay}, we can arrange that 
$|\Rc| \leq (1-\delta)/4$ on $\Ec_{\rad}^{\tau_0}$ by selecting $\rad\geq \rad_0$ sufficiently large. 
Since $|\tau_0| \leq 1$, the first term on the right in 
\eqref{eq:phihess} is then bounded below by $g$ on this set, and the tensor in the right factor of the second term
is bounded below by $((1+\delta)/2)g - \delta \nabla h\otimes\nabla h$.  For each $(x, \tau)\in \Ec_{\rad_0}^{\tau_0}$, 
the restriction of this latter tensor to 
the orthogonal complement of $\nabla h(x, \tau)$ is clearly positive definite, and since
\[
  \frac{1+\delta}{2}|\nabla h|^2 -\delta|\nabla h|^4 = |\nabla h|^2\left(\frac{1-\delta}{2} + \frac{4\delta\tau^2R}{h^2}\right), 
\]
by invoking \eqref{eq:curvdecay} and increasing $\rad$ if necessary, 
we may achieve that this same tensor is fully positive definite on $\Ec_{\rad}^{\tau_0}$.  
This implies the desired inequality on $\nabla\nabla\phi_1$.

For the second inequality in \eqref{eq:phihessFest}, we begin with \eqref{eq:phitau} and note that
\[
  \pd{\phi_1}{\tau} = - \alpha h^{2-\delta}\left(1 - \frac{2(2-\delta)\tau(\tau_0 - \tau)R}{h^2} - \frac{4\tau R}{\alpha h^{2-\delta}}\right)
  \leq -\frac{\alpha}{2}h^{2-\delta},
\]
if $\alpha \geq 1$. Then since our previous inequality for $\nabla\nabla \phi_1$ implies $\Delta\phi_1 \geq n$ on 
$\Ec_{\rad}^{\tau_0}$ for $\rad$ sufficiently large,
we also have
\[
  F_1 = \pd{\phi_1}{\tau} - \Delta \phi_1  -|\nabla \phi_1|^2 + R 
    \leq -\frac{\alpha}{2}h^{2-\delta} - n + CK_0r_c^{-2} \leq -\frac{1}{2}\left(\alpha h^{2-\delta} + n \right),
\]
after using \eqref{eq:curvdecay} and possibly increasing $\rad$ again by an amount determined by $n$ and $K_0$. This
gives the upper bound on $F_1$.  

For the lower bound, note that equations \eqref{eq:phigrad} and \eqref{eq:phihess} give that
\begin{align*}
\begin{split}
  |\nabla \phi_1|^2 &= \left(\alpha(2-\delta)(\tau_0-\tau)h^{1-\delta} + 2h\right)^2|\nabla h|^2, \quad\mbox{and}\\
  \Delta \phi_1 &= 2n -4\tau R  + \alpha(2-\delta)(\tau_0- \tau)(n - 2\tau R - \delta|\nabla h|^2)h^{-\delta},
\end{split}
\end{align*}
which, in combination with  \eqref{eq:hspatial2} and \eqref{eq:phitau}, yields
\begin{align}\label{eq:F_1detail}
\begin{split}
 F_1  &= (4\tau+1)R - \alpha\left(h^{2-\delta} - 2(2-\delta)(\tau_0-\tau)\tau h^{-\delta}R\right)\\
   &\phantom{=} -2n + 4\tau R - \alpha(2-\delta)(\tau_0- \tau)\left((n -\delta - 2\tau R)h^{-\delta} 
      + 4\delta\tau^2h^{-2-\delta}R\right)\\
  &\phantom{=} -\left(\alpha(2-\delta)(\tau_0-\tau)h^{1-\delta} + 2h\right)^2
		(1-4\tau^2h^{-2}R).
\end{split}
\end{align}
So, using \eqref{eq:hcurvbounds}, we have
\begin{equation*}
  |F_1| \leq N + 4h^2 + N\alpha h^{2-\delta} + N \alpha^2(\tau_0 -\tau)^2 h^{2-2\delta}
\end{equation*}
on $\Ec_{\rad}^{\tau_0}$.
\end{proof}

Next we seek a lower bound on $(\partial_{\tau} + \Delta) F_1$ in order 
to bound $Q_2$ in \eqref{eq:q1q2def} from below. We first return to the detailed expression \eqref{eq:F_1detail}
and group the terms with like powers of $\alpha$, writing 
$F_1 = B_{0} + \alpha B_{1} + \alpha^2B_{2}$.  Before differentiating, we note that
the derivatives of $h$ and $R$ are bounded on $\Ec_{\rad_0}^{\tau_0}$ by 
\eqref{eq:curvdecay} and \eqref{eq:hspatial}--\eqref{eq:pdtauh}, and 
since we also have $0\leq \tau \leq \tau_0 \leq 1$ we will really only need to consider carefully
the terms of highest order in $h$ in each $B_i$.  From \eqref{eq:F_1detail}, we see that
we in fact have
\begin{equation*}
 B_0 = -4h^2 + P_0, \quad   B_1 = -(1+ 4(2-\delta)(\tau_0-\tau))h^{2-\delta} + P_1,
\end{equation*}
and
\[
     B_2 = -(2-\delta)^2(\tau_0-\tau)^2h^{2-2\delta} + P_2,
\]
where $P_0$, $P_1$, and $P_2$ satisfy
\begin{equation*}
    \pd{P_0}{\tau}+\Delta P_0 \geq - C(K_0^2+1)h^{-2},\quad \pd{P_1}{\tau}+\Delta P_1\geq -C(K_0^2+1)h^{-\delta},
\end{equation*}
and
\begin{equation*}
	\pd{P_2}{\tau}+\Delta P_2 \geq -C(K_0^2+1)h^{-2\delta}.
\end{equation*}
for some constant $C = C(n)$. 

\begin{lemma}
\label{lem:bhFest} For all $\delta\in (0, 1)$,
there exists $\rad_2\geq \rad_0$ depending only on $n$, $\delta$, and $K_0$, such that the function 
$F_1 = F_{1; \alpha, \tau_0}$ satisfies
\begin{align}
\label{eq:Fbackheat}
\pd{F_1}{\tau} + \Delta F_{1} &\geq 3\alpha h^{2-\delta} + \alpha^2(\tau_0 - \tau)h^{2-2\delta}
\end{align} 
on $\Ec_{\rad_2}^{\tau_0}$ for all $\alpha \geq 1$ and $\tau_0 \in (0, 1]$. 
\end{lemma}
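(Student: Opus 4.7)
The plan is to compute $(\partial_\tau+\Delta)F_1$ term by term, using the decomposition $F_1 = B_0 + \alpha B_1 + \alpha^2 B_2$ established just before the statement, and to show that the two positive terms produced by $\partial_\tau$ acting on the explicit $(\tau_0-\tau)$-factors dominate all remaining contributions once $\rad_2$ is taken large.

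The computational core is the identity
\[
(\partial_\tau+\Delta)(h^k) = k(k+n-2)\,h^{k-2} - 4k(k-2)\tau^2 R\,h^{k-4},
\]
which follows immediately from Lemma \ref{lem:hder} (the apparent $\tau R h^{k-2}$ contributions from $\partial_\tau h$ and $\Delta h$ cancel). Combined with the product rule and $\partial_\tau(\tau_0-\tau)^j = -j(\tau_0-\tau)^{j-1}$, this yields, after using \eqref{eq:hcurvbounds}:
\begin{align*}
(\partial_\tau+\Delta)\bigl[(\tau_0-\tau)h^{2-\delta}\bigr] &= -h^{2-\delta} + (\tau_0-\tau)\bigl[(2-\delta)(n-\delta)\,h^{-\delta} + \mathrm{Err}_1\bigr],\\
(\partial_\tau+\Delta)\bigl[(\tau_0-\tau)^2 h^{2-2\delta}\bigr] &= -2(\tau_0-\tau)\,h^{2-2\delta} + (\tau_0-\tau)^2\bigl[2(1-\delta)(n-2\delta)\,h^{-2\delta} + \mathrm{Err}_2\bigr],
\end{align*}
with $|\mathrm{Err}_i| \leq C(n,\delta,K_0)$. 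The crucial observation for the $\alpha^2$ part is that every subleading term on the right of the second line retains a $(\tau_0-\tau)^2$ factor. Inspecting the expansion of $|\nabla\phi_1|^2$ in \eqref{eq:F_1detail} confirms that $P_2 = 4(2-\delta)^2(\tau_0-\tau)^2\tau^2 R\, h^{-2\delta}$ likewise carries $(\tau_0-\tau)^2$, so $(\partial_\tau+\Delta)P_2 \geq -C(\tau_0-\tau)h^{-2\delta}$ as well.

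Applying these to $B_0, B_1, B_2$ and assembling with their $\alpha$-weights gives
\[
(\partial_\tau+\Delta)F_1 \geq 4(2-\delta)\alpha h^{2-\delta} + 2(2-\delta)^2 \alpha^2(\tau_0-\tau) h^{2-2\delta} - C\bigl(1 + \alpha h^{-\delta} + \alpha^2(\tau_0-\tau) h^{-2\delta}\bigr)
\]
for some $C=C(n,\delta,K_0)$. For $\delta\in(0,1)$ we have the uniform gaps $4(2-\delta)-3 \geq 1$ and $2(2-\delta)^2 - 1 \geq 1$. Since $h \geq r_c/2 \geq \rad_2/2$ by \eqref{eq:hrbounds}, the error terms can be rewritten as
$\alpha h^{-\delta} = h^{-2}\cdot \alpha h^{2-\delta}$ and $\alpha^2(\tau_0-\tau)h^{-2\delta} = h^{-2}\cdot\alpha^2(\tau_0-\tau)h^{2-2\delta}$, each with prefactor $\leq 4C/\rad_2^2$, and the bounded constant $C$ is dwarfed by $\alpha h^{2-\delta} \geq (\rad_2/2)^{2-\delta}$ whenever $\alpha\geq 1$. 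Choosing $\rad_2$ large in terms of $n,\delta,K_0$ therefore absorbs every error into the leading positive pair and yields \eqref{eq:Fbackheat}.

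The main --- really only --- point requiring care is the bookkeeping around the $\alpha^2$ contribution: had the subleading terms there been merely of order $h^{-2\delta}$ rather than $(\tau_0-\tau)h^{-2\delta}$, no choice of $\rad_2$ would absorb the $C\alpha^2 h^{-2\delta}$ error into $\alpha^2(\tau_0-\tau)h^{2-2\delta}$ when $\tau$ approaches $\tau_0$. The preservation of the $(\tau_0-\tau)$ factor --- both in $(\partial_\tau+\Delta)[(\tau_0-\tau)^2 h^{2-2\delta}]$ and in $(\partial_\tau+\Delta)P_2$ --- is what makes the estimate work; everything else is routine term-by-term estimation supported by \eqref{eq:curvdecay} and \eqref{eq:hcurvbounds}.
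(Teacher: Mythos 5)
Your proof follows the same route as the paper's: decompose $F_1 = B_0 + \alpha B_1 + \alpha^2 B_2$, apply the heat-operator identity $(\partial_\tau+\Delta)h^\beta = \beta(n+\beta-2)h^{\beta-2} - 4\beta(\beta-2)\tau^2 R\,h^{\beta-4}$, estimate each piece, and absorb errors by taking $\rad_2$ large. What you add---and it is the crucial point---is the observation that the $\alpha^2$ error must carry a factor of $(\tau_0-\tau)$. The paper states the bound $(\partial_\tau+\Delta)P_2 \geq -C(K_0^2+1)h^{-2\delta}$ just before the lemma and combines that with $2(\tau_0-\tau)(2-\delta)^2 \alpha^2 h^{2-2\delta}$ in the proof; as you note, that version cannot be closed uniformly in $\alpha$, since when $\tau\to\tau_0$ the good $\alpha^2$-term vanishes while $C\alpha^2 h^{-2\delta}$ does not, and $\alpha h^{2-\delta}$ cannot dominate $\alpha^2 h^{-2\delta}$ for fixed $h$ and large $\alpha$. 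The repair you supply is correct: since $P_2 = 4(2-\delta)^2(\tau_0-\tau)^2\tau^2 R\, h^{-2\delta}$ carries $(\tau_0-\tau)^2$, every term produced by $\partial_\tau$ or $\Delta$ retains at least one factor of $(\tau_0-\tau)$, so the sharper bound $(\partial_\tau+\Delta)P_2 \geq -C(\tau_0-\tau)h^{-2\delta}$ holds, after which the absorption goes through as you argue. In short, your proof is correct and is essentially the paper's, but with the needed $(\tau_0-\tau)$ factor in the $\alpha^2$ error made explicit---something the paper's stated bound suppresses.
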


\begin{proof}
Using Lemma \ref{lem:hder}, we have
\begin{align*}
  \pd{h^{\beta}}{\tau} + \Delta h^{\beta} &= \beta(n+\beta -2)h^{\beta -2}
      -4\beta(\beta-2)\tau^2h^{\beta-4}R
\end{align*}
for any $\beta$. Consequently, using the definition of the $B_i$, we have
\begin{align*}
  \pd{B_0}{\tau} + \Delta B_0 &\geq -8n - C(K_0^2 + 1)h^{-2}, \\
  \pd{B_1}{\tau} + \Delta B_1 &\geq 4(2-\delta)h^{2-\delta} - C(K_0^2 + 1)h^{-\delta},\quad\mbox{and}\\
  \pd{B_2}{\tau} + \Delta B_2 &\geq 2(\tau_0 - \tau)(2-\delta)^2h^{2-2\delta} - C(K_0^2 + 1)h^{-2\delta}
\end{align*}
for some $C = C(n)$. Thus, since $\alpha \geq 1$ and $\delta \in (0, 1)$, we obtain that $F_1 = B_0 + \alpha B_1 + \alpha^2 B_2$ satisfies
\[
 \pd{F_1}{\tau} + \Delta F_1 \geq 3\alpha(2-\delta)h^{2-\delta} + \alpha^2(2-\delta)^2(\tau_0 - \tau)h^{2-2\delta}
\]
on $\Ec_{\rad}^{\tau_0}$ for  $\rad$ chosen sufficiently large
depending only on $n$, $\delta$, and $K_0$.
\end{proof}

\subsection{Carleman inequalities for the PDE-ODE system}
Substituting $G_{1; \alpha, \tau_0}$ for $G$ in Lemmas \ref{lem:pdecarlemanineq} and \ref{lem:odecarlemanineq} 
and using Lemmas \ref{lem:phiest} and \ref{lem:bhFest} to estimate the error terms, we now prove the first
set of our desired Carleman inequalities.
\begin{proposition}
\label{prop:systemcarlemanineq}
 For all $\delta \in (0, 1)$, There exists
 $\rad_3=\rad_3(n, \delta, K_0)\geq \rad_0$ such that, for  all $\alpha \geq 1$
 and all $Z\in C^\infty(\mathcal{Z}\times [0,\tau_0])$  satisfying $Z(\cdot, 0)=0$ and that
$Z(\cdot, \tau)$
 is compactly
supported in $E_{\rad_3}$ for each $\tau \in [0, \tau_0]$, we have the estimate
\begin{align}
\label{eq:systemcarlemanineq1}
\begin{split}
&\alpha\|ZG_1^{1/2}\|^2_{L^2(\Ec_{\rad_3}^{\tau_0})}+ \|\nabla Z G_1^{1/2}\|_{L^2(\Ec_{\rad_3}^{\tau_0})}^2 
\\
  &\qquad\le \frac{1}{2}\|(\partial_{\tau} +\Delta) ZG_1^{1/2}\|_{L^2(\Ec_{\rad_3}^{\tau_0})}^2
 + \|\nabla Z G_1^{1/2}\|^2_{L^2(E_{\rad_3}\times\{\tau_0\})}
\end{split}
\end{align}
and
\begin{align} 
\label{eq:systemcarlemanineq2}
&\alpha\|Z G_1^{1/2}\|_{L^2(\Ec_{\rad_3}^{\tau_0})}^2 \le 2\|\partial_{\tau}ZG_1^{1/2}\|_{L^2(\Ec_{\rad_3}^{\tau_0})}^2,
\end{align}
where $G_1 = G_{1; \alpha, \tau_0}$.
\end{proposition}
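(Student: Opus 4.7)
The plan is to substitute the weight $G = G_{1;\alpha,\tau_0}$ into the two divergence-based inequalities proved above: Lemma \ref{lem:pdecarlemanineq} for the PDE estimate \eqref{eq:systemcarlemanineq1}, and Lemma \ref{lem:odecarlemanineq} for the ODE estimate \eqref{eq:systemcarlemanineq2}. The only work is then to exploit the pointwise bounds on $\nabla\nabla \phi_1$, $F_1$, and $\partial_\tau F_1 + \Delta F_1$ from Lemmas \ref{lem:phiest} and \ref{lem:bhFest} to absorb the error terms, choosing $\rad_3$ large depending only on $n$, $\delta$, and $K_0$.

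For \eqref{eq:systemcarlemanineq1}, I first observe that $F_1 \leq 0$ on $\Ec_{\rad_1}^{\tau_0}$ (Lemma \ref{lem:phiest}), so the boundary integrand $|\nabla Z|^2 + (F_1/2)|Z|^2$ at $\tau = \tau_0$ is dominated by $|\nabla Z|^2$. It then suffices to verify the pointwise bound
\begin{equation*}
Q_1(\nabla Z,\nabla Z) + Q_2(Z,Z) - 2 E(Z,\nabla Z) \;\geq\; |\nabla Z|^2 + \alpha\,|Z|^2
\end{equation*}
on $\Ec_{\rad_3}^{\tau_0}$. The Hessian bound $\nabla\nabla\phi_1 \geq g$ immediately gives $Q_1 \geq (2 - Nr_c^{-2})|\nabla Z|^2$, which exceeds $|\nabla Z|^2$ once $r_c$ is large. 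For $Q_2$, the lower bound $\partial_\tau F_1 + \Delta F_1 \geq 3\alpha h^{2-\delta} + \alpha^2(\tau_0 - \tau)h^{2-2\delta}$ from Lemma \ref{lem:bhFest} is paired with the upper bound $|F_1| \leq N(1 + h^2 + \alpha h^{2-\delta} + \alpha^2(\tau_0-\tau)^2 h^{2-2\delta})$ from Lemma \ref{lem:phiest}. Using $r_c \sim h$ (equation \eqref{eq:hrbounds}), every term in $N|F_1|/r_c^2$ is of strictly lower order in $h$ than the matching principal term, so for $\rad_3$ sufficiently large $Q_2 \gtrsim \alpha h^{2-\delta}|Z|^2 + \alpha^2(\tau_0 - \tau)h^{2-2\delta}|Z|^2$, which exceeds $\alpha|Z|^2$ uniformly in $\alpha \geq 1$.

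The commutator $E(Z,\nabla Z)$ is controlled via Remark \ref{rem:commutator}: since $\nabla\phi_1 = \Upsilon\,\nabla f$ with $\Upsilon = (2\tau/h)\bigl(\alpha(2-\delta)(\tau_0-\tau)h^{1-\delta} + 2h\bigr)$, one has $1 + \Upsilon^2 \leq C\bigl(1 + \alpha^2(\tau_0-\tau)^2 h^{-2\delta}\bigr)$, and combining with $|\nabla\Rc| \leq C K_0 h^{-3}$ from \eqref{eq:hcurvbounds} yields
\begin{equation*}
|E(Z,\nabla Z)| \;\leq\; C K_0 h^{-3}|\nabla Z|^2 + C K_0 h^{-3}\bigl(1 + \alpha^2(\tau_0-\tau)^2 h^{-2\delta}\bigr)|Z|^2.
\end{equation*}
Each term on the right carries a negative power of $h$ relative to the corresponding coercive term in $Q_1$ or $Q_2$, so a further enlargement of $\rad_3$ absorbs $|E|$ without affecting the $\alpha$-dependence. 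This yields \eqref{eq:systemcarlemanineq1}.

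For \eqref{eq:systemcarlemanineq2}, I substitute $G = G_{1;\alpha,\tau_0}$ directly into Lemma \ref{lem:odecarlemanineq}. The bound $\partial_\tau \phi_1 \leq -(\alpha/2)h^{2-\delta}$ (from the proof of Lemma \ref{lem:phiest}) gives $-N - \partial_\tau \phi_1 \geq (\alpha/2)h^{2-\delta} - N$, which exceeds $\alpha/2$ as soon as $h^{2-\delta} \geq 2N + 1$, hence for all $\alpha \geq 1$ once $\rad_3$ is further enlarged. The main obstacle throughout is the bookkeeping required to keep every error term at least one order of $h$ smaller than the coercive term it is competing with, with absorption thresholds independent of $\alpha$; the exponent $\delta \in (0,1)$ in the definition of $G_1$ is precisely what provides the extra factor $h^{-\delta}$ in the ``cross'' term of $Q_2$ that makes this uniformity possible.
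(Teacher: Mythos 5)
Your proposal is correct and follows essentially the same route as the paper: substitute $G_1$ into Lemmas \ref{lem:pdecarlemanineq} and \ref{lem:odecarlemanineq}, then use the Hessian and $F_1$ bounds from Lemma \ref{lem:phiest}, the backwards-heat bound on $F_1$ from Lemma \ref{lem:bhFest}, and the commutator control from Remark \ref{rem:commutator} to absorb all error terms on $\Ec_{\rad_3}^{\tau_0}$ once $\rad_3$ is large. The only minor slip is attributing the bound $|\nabla\Rc| \lesssim K_0 h^{-3}$ to \eqref{eq:hcurvbounds}, which only gives $h^{-2}$ (the sharper power comes from \eqref{eq:curvdecay} with $m=1$); either exponent suffices for the absorption, so the argument is unaffected.
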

\begin{proof}
We apply Lemmas \ref{lem:phiest} and \ref{lem:bhFest} and let $\rad_3 \geq \max\{\rad_1, \rad_2\}$ initially.
Below, $N$ will denote a series of constants depending only on $n$ and $K_0$.
If $\alpha \geq 1$, then \eqref{eq:hrbounds} and \eqref{eq:phihessFest} imply that
\[
  r_c^{-2}|F_1| \leq N\alpha + N\alpha^2(\tau_0-\tau)^2h^{-2\delta}.
\]
Then, from \eqref{eq:phihessFest} and \eqref{eq:Fbackheat} (and since $0 \leq (\tau_0 -\tau)\leq 1$), we have
\[
 Q_1(\nabla Z, \nabla Z) \geq \left(2- \frac{N}{r_c^2}\right)|\nabla Z|^2,
\]
and
\begin{align*}
 Q_2(Z, Z) &\geq \left(\frac{3\alpha}{2} h^{2-\delta} - \frac{N|F_1|}{r_c^2}\right)|Z|^2
+ \frac{\alpha^2}{2}(\tau_0 - \tau)h^{2-2\delta}|Z|^2\\
      &\geq \alpha h^{2-\delta}\left(\frac{3}{2}- \frac{N}{h^{2-\delta}}\right)|Z|^2 
	    +\alpha^2(\tau_0 - \tau)h^{2-2\delta}\left(\frac{1}{2} - \frac{N}{h^2}\right)|Z|^2.
\end{align*}
So, enlarging $\rad_3$ if necessary, we can arrange that
\[
  Q_1(\nabla Z, \nabla Z) + Q_2(Z, Z) \geq \frac{3}{2}|\nabla Z|^2 + 
\left(\frac{4\alpha}{3}+ \frac{\alpha^2}{3}(\tau_0 - \tau)h^{2-2\delta}\right)|Z|^2
\]
on $\Ec_{\rad_3}^{\tau_0}$.  For \eqref{eq:systemcarlemanineq1}, then, 
it remains only to estimate the $E(Z, \nabla Z)$ term from Lemma \ref{lem:pdecarlemanineq}. 
Writing $\nabla \phi_1 = \Upsilon_1 \nabla f$,
where $\Upsilon_1 \dfn 2\tau(\alpha(2-\delta)(\tau_0-\tau)h^{-\delta} + 2)$, we may apply  \eqref{eq:eequation2} of Remark \ref{rem:commutator} to obtain
\[
-E(Z, \nabla Z) \geq -Nh^{-2}\left(|\nabla Z|^2 + \left(1+ \alpha^2\tau^2(\tau_0-\tau)^2h^{-2\delta}\right)|Z|^2\right)
\]
for some $N = N(n, \kappa, \nu, K_0)$.  Thus  after potentially increasing
$\rad_3$ again, we have
\[
    -E(Z, \nabla Z) \geq -\frac{1}{4}|\nabla Z|^2 - \frac{1}{6}|Z|^2 - \frac{\alpha{^2}(\tau_0-\tau)h^{2-2\delta}}{6}|Z|^2
\]
on $\Ec_{\rad_3}^{\tau_0}$, and \eqref{eq:systemcarlemanineq1} follows from Lemma \ref{lem:pdecarlemanineq}.

For inequality \eqref{eq:systemcarlemanineq2}, observe that
\[
  \pd{\phi_1}{\tau} = 4\tau R - \alpha h^{2-\delta}\left(1 - \frac{2(2-\delta)\tau(\tau_0 - \tau)R}{h^2}\right)
  \geq -\frac{2\alpha}{3}h^{2-\delta},
\]
from \eqref{eq:phitau}, and thus the desired inequality follows from \eqref{eq:odecarlemanineq},
by choosing $\rad_3$ sufficiently large. 
\end{proof}


\section{Carleman estimates to imply rapid decay}
\label{sec:carleman2}

Since the weight function $G_{1; \alpha, \tau_0}$ in the previous section 
has growth of order $\exp(Nr_c^2)$ at infinity, we cannot make use of estimates 
\eqref{eq:systemcarlemanineq1} -- \eqref{eq:systemcarlemanineq2}
until we guarantee that any solution to the PDE-ODE system \eqref{eq:pdeode} which vanishes on 
$E_{\rad_0}\times\{0\}$ decays at a correspondingly rapid rate.
We verify this decay with the help of another pair of Carleman estimates.  Our preliminary model is inequality (1.4) of \cite{EscauriazaSereginSverak},
which, writing $\sigma_a(\tau) = (\tau+a)e^{-(\tau+a)/3}$, asserts that, for some constant $C = C(n)$,
\begin{align*}
  &\sqrt{\alpha}\|\sigma_a^{-\alpha-\frac12} e^{-\frac{|x - y|^2}{8(\tau+a)}}u\|_{L^2(\RR^n\times(0, 1))}
+  \|\sigma_a^{-\alpha} e^{-\frac{|x - y|^2}{8(\tau+a)}}\nabla u\|_{L^2(\RR^n\times(0, 1))}\\
  &\quad \quad \leq C\|\sigma_a^{-\alpha} e^{-\frac{|x - y|^2}{8(\tau+a)}}(\partial_{\tau} + \Delta)u\|_{L^2(\RR^n\times(0, 1))}
\end{align*}
for all $\alpha \geq 0$, $y\in \RR^n$, $a\in (0, 1)$, and $u\in C^{\infty}_c(\RR^n\times[0, 1))$ satisfying $u(\cdot, 0) \equiv 0$.
We wish to find a generalization of this inequality to our geometric setting.

Replacing $\alpha$ with $\alpha + n/2$ in the above inequality, for example, one can see that the basic ingredient in the weight
is the time-shifted Euclidean heat kernel $(\tau + a)^{-n/2}e^{-|x-y|^2/(4(\tau+a))}$. The proof and subsequent
application of this estimate in \cite{EscauriazaSereginSverak} are considerably simplified by the fact that the weight is
an exact solution to the heat equation and possesses a translational invariance in $y$.  
Neither of these properties, however, are essential to verifying the decay we are after,
and with ``approximately radial, approximately caloric'' weight $G_2$, we are able to prove a
 weaker but still sufficiently powerful variant of their estimate applicable to the PDE component of our system.
Our prototype is the inequality 
\begin{align*}
 &\sqrt{\alpha}\|\sigma_a^{-\alpha-\frac12} e^{-\frac{(|x|- \rho)^2}{8(\tau+a)}}u\|_{L^2(\RR^n\times(0, 1))}
+  \|\sigma_a^{-\alpha} e^{-\frac{(|x|-\rho)^2}{8(\tau+a)}}\nabla u\|_{L^2(\RR^n\times(0, 1))}\\
  &\quad \quad \leq C(\gamma, n)\|\sigma_a^{-\alpha} e^{-\frac{(|x|-\rho)^2}{8(\tau+a)}}(\partial_{\tau} + \Delta)u\|_{L^2(\RR^n\times(0, 1))}
\end{align*}
with $\sigma_a$ as above and $\gamma \geq 1$ some fixed number, valid for all $\alpha \geq \alpha^{\prime}(\gamma, n) \geq 0$, $\rho \geq 1$, 
$a\in (0, 1)$, and $u\in C^{\infty}_c(\{\,|x|> \gamma\rho\}\times [0, 1))$ vanishing for $\tau = 0$.

It is worth remarking that, e.g., via a scaling argument applied relative to a finite fixed atlas,
the decay condition we seek can be reduced in principle to a local verification.  Escauriaza-Fernandez \cite{EscauriazaFernandez}
(cf. \cite{Nguyen}) have considered such problems for a very general class of parabolic equations with time-dependent coefficients,
and their estimates offer another potential model for the estimate on our PDE component.  However, since the elliptic operators 
in our problem are actually Laplacians relative to $g(\tau)$ (and so also perturbations of a conical
Laplacian), our situation is fundamentally simpler than that of \cite{EscauriazaFernandez}, 
and we find that the approach of \cite{EscauriazaSereginSverak} yields estimates with somewhat more transparent geometric interpretations.
In this approach it is possible to get by with far less complicated weights, the use of which also greatly simplifies
 the proof of the corresponding estimates for the ODE components. 

\subsection{Another divergence identity}
As in the previous section, our estimate will follow from 
integrating a general divergence identity against an appropriate weight. 
In this case, our choice of weight $G$ will be a perturbation
of a fundamental solution and so not itself be logarithmically convex.
In order to use an inequality of the form in Lemma \ref{lem:pdecarlemanineq}
to control $|\nabla Z|$ above by $|(\partial_{\tau} + \Delta)Z|$,
we must first tinker with the divergence identity
to increase the effective logarithmic convexity of $G$.
 
Thus, as in \cite{EscauriazaSereginSverak}, we introduce additional positive time dependent functions  $\sigma =\sigma(\tau)$ and $\theta = \theta(\tau)$,
stipulating only for the time-being that $\sigma$ be increasing.
Replacing $G$ in \eqref{eq:dividentity1} by $\sigma^{-\alpha}G$,
multiplying both sides by $\theta$, and using the product rule to bring the $\theta$ factor inside the time-derivative
of the last term on the left-hand side of that equation, we obtain the following perturbed identity.

\begin{lemma}\label{lem:dividentity2}  For any $F$, $G \in C^{\infty}(\Ec_{\rad_0}^{\tau_0})$ with $G > 0$, 
and positive functions $\sigma$, $\theta\in C^{\infty}([0, \tau_0])$ with $\sigma$ increasing, the following
identity holds for any $Z\in C^{\infty}(\mathcal{Z}\times[0, \tau_0])$: 
\begin{align}\label{eq:dividentity2}
\begin{split}
  &\theta\sigma^{-\alpha}\nabla_i\bigg\{2\left\langle \pd{Z}{\tau}, \nabla_i Z\right\rangle G + |\nabla Z|^2\nabla_i G
  -2\left\langle\nabla_{\nabla G}Z, \nabla_i Z\right\rangle + \frac{FG}{2}\nabla_i|Z|^2\\
  &\quad\phantom{=}\quad + \frac{1}{2}\left(F\nabla_i G - G\nabla_i F\right)|Z|^2\bigg\} d\mu
	 -\pdtau\bigg\{\left(|\nabla Z|^2 + \frac{F}{2}|Z|^2\right)\theta \sigma^{-\alpha}G d\mu\bigg\}\\
   &\quad= \bigg\{2\left\langle \mcA Z, \pd{Z}{\tau} + \Delta Z\right\rangle  - 2\left|\mcA Z\right|^2
    -\frac{1}{2}\left(\pd{F}{\tau} + \Delta F\right)|Z|^2 \\
  &\quad\phantom{=}\quad +\left(F - \left(G^{-1}\left(\pd{G}{\tau} 
      - \Delta G\right) + R-\alpha\frac{\dot{\sigma}}{\sigma}\right)\right)\left(|\nabla Z|^2 +\frac{F}{2}|Z|^2\right) \\
  &\quad\phantom{=}\quad -\frac{\dot{\theta}}{\theta}\left(|\nabla Z|^2 + \frac{F}{2}|Z|^2\right)
- 2\nabla_i\nabla_j\phi\langle \nabla_iZ, \nabla_jZ\rangle \\
  &\quad\phantom{=}\quad -\pd{g}{\tau}(\nabla Z, \nabla Z) -\frac{F}{2}\pd{g}{\tau}(Z, Z)  
      +2 E(Z, \nabla Z)\bigg\}\theta\sigma^{-\alpha} G d\mu.
\end{split}
\end{align}
Here, $\mcA$ and $E(Z, \nabla Z)$ are defined as in \eqref{eq:asdef} and \eqref{eq:edef}, respectively,
and the instances of 
$\pd{g}{\tau}$ are to be interpreted as in Lemma \ref{lem:dividentity1}.
\end{lemma}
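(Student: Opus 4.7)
The plan is to derive \eqref{eq:dividentity2} directly from the base identity \eqref{eq:dividentity1} by a two-step substitution: first replace the weight $G$ by $\widetilde{G} \dfn \sigma^{-\alpha}G$, and then multiply through by $\theta(\tau)$, shifting this factor inside the time derivative using the product rule.

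The key observation for the first substitution is that $\sigma = \sigma(\tau)$ depends only on time, so if $\widetilde{\phi} \dfn \log \widetilde{G} = \phi - \alpha \log \sigma$, then $\nabla\widetilde{\phi} = \nabla\phi$ and $\nabla\nabla\widetilde{\phi} = \nabla\nabla\phi$. Consequently the operators $\mcA$ and $\mcS$ associated to the pair $(F,\widetilde{G})$ via \eqref{eq:asdef} coincide with those associated to $(F,G)$, and likewise the commutator term $E(Z,\nabla Z)$ is unchanged. The only coefficient that sees the $\sigma^{-\alpha}$ factor is the one in front of $(|\nabla Z|^2 + (F/2)|Z|^2)$ on the right side of \eqref{eq:dividentity1}, where
\[
\widetilde{G}^{-1}\!\left(\pdtau \widetilde{G} - \Delta \widetilde{G} + R\widetilde{G}\right)
= G^{-1}\!\left(\pdtau G - \Delta G + RG\right) - \alpha\,\frac{\dot{\sigma}}{\sigma}.
\]
Plugging this into \eqref{eq:dividentity1} with $G$ replaced by $\widetilde{G}$ (and noting that $\sigma^{-\alpha}$ pulls cleanly out of every spatial derivative and out of the flux term inside $\nabla_i\{\cdot\}$) yields an intermediate identity whose left side contains the time-derivative $\pdtau\{(|\nabla Z|^2 + (F/2)|Z|^2)\sigma^{-\alpha}G\,d\mu\}$ and whose right side contains the displayed $\alpha\dot\sigma/\sigma$ correction.

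For the second step, multiply the entire intermediate identity by $\theta(\tau) > 0$. Since $\theta$ is independent of the spatial variables, it passes freely through the divergence $\nabla_i\{\cdot\}$ on the left, and freely into every spatial term on the right. The only term that requires care is the time derivative, where the product rule gives
\[
\theta\,\pdtau\!\left\{\Phi\right\}
= \pdtau\!\left\{\theta\,\Phi\right\} - \dot{\theta}\,\Phi,
\quad \Phi \dfn \left(|\nabla Z|^2 + \tfrac{F}{2}|Z|^2\right)\sigma^{-\alpha}G\,d\mu.
\]
Moving the extra $-\dot\theta\,\Phi$ contribution to the right and rewriting it as $-(\dot\theta/\theta)(|\nabla Z|^2 + (F/2)|Z|^2)\,\theta\sigma^{-\alpha}G\,d\mu$ reproduces precisely the new line involving $\dot\theta/\theta$ in \eqref{eq:dividentity2}. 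All other terms inherit the uniform factor $\theta\sigma^{-\alpha}$ as claimed.

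There is no real obstacle here: the argument is bookkeeping built on top of Lemma \ref{lem:dividentity1}. The one point to verify carefully is that $\nabla\widetilde{\phi} = \nabla\phi$ (so that the structure of $\mcA$, of the Hessian term $\nabla_i\nabla_j\phi$, and of $E(Z,\nabla Z)$ are preserved verbatim), and that the hypothesis $\dot\sigma > 0$ is only used implicitly to justify the sign conventions in later applications—the identity itself is valid for any smooth positive $\sigma$ and $\theta$. After these checks, rearranging the surviving terms into the format of \eqref{eq:dividentity2} is immediate.
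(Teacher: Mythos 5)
Your proposal is correct and follows exactly the route the paper sketches in the sentence preceding Lemma~\ref{lem:dividentity2}: substitute $\sigma^{-\alpha}G$ for $G$ in \eqref{eq:dividentity1} (noting $\nabla\log(\sigma^{-\alpha}G) = \nabla\phi$ since $\sigma$ is purely time-dependent, so $\mcA$, $\nabla\nabla\phi$, and $E(Z,\nabla Z)$ are untouched and only the coefficient $G^{-1}(\partial_\tau G - \Delta G + RG)$ acquires the $-\alpha\dot\sigma/\sigma$ correction), then multiply by $\theta$ and use the product rule to pull it inside the $\partial_\tau$ on the left, moving $-\dot\theta\,\Phi = -(\dot\theta/\theta)(|\nabla Z|^2 + \tfrac{F}{2}|Z|^2)\,\theta\sigma^{-\alpha}G\,d\mu$ to the right. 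The supporting computations you give match, and your remark that the identity itself requires no sign hypothesis on $\dot\sigma$ is also accurate.
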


\subsection{Two variations on the weighted $L^2$-inequality for the PDE component}
In our application of interest, we will first choose
\[
 F \dfn  \frac{1}{G}\left(\pd{G}{\tau} -\Delta G\right) + R -\alpha\frac{\dot{\sigma}}{\sigma}
    \dfn \tilde{F}  -\alpha\frac{\dot{\sigma}}{\sigma}
\]
to eliminate the fourth term on the right-hand side of \eqref{eq:dividentity2}. Then, choosing 
$\theta\dfn \sigma/\dot{\sigma}$ as in \cite{EscauriazaSereginSverak}, we have
\[
 \frac{\dot{\theta}}{\theta}\frac{\dot{\sigma}}{\sigma} = \frac{\dot{\sigma}^2}{\sigma^2}
    \left(1 - \frac{\ddot{\sigma}\sigma}{\dot{\sigma}^2}\right)
      = -\ddot{\widehat{\log\sigma}},
\]
which leads to a useful cancellation among the coefficients of $|Z|^2$ in \eqref{eq:dividentity2}:
\[
 \pd{F}{\tau} + \Delta F + \frac{\dot{\theta}}{\theta}F = 
  \pd{\tilde{F}}{\tau} + \Delta{\tilde{F}}  -\alpha\ddot{\widehat{\log\sigma}}
  + \frac{\dot{\theta}}{\theta}\left(\tilde{F}-\alpha\frac{\dot{\sigma}}{\sigma}\right)=  
    \pd{\tilde{F}}{\tau} + \Delta{\tilde{F}} +\frac{\dot{\theta}}{\theta}\tilde{F}.
\]
Finally, using the good $-2|\mcA Z|^2$ term in \eqref{eq:dividentity2} together with the
Cauchy-Schwarz
inequality, we obtain the following estimate upon integration over $\Ec_{\rad_0}^{\tau_0}$.
\begin{lemma}\label{lem:pdeintineq2}
There exists a constant $N = N(n, \kappa, \nu, K_0)$ such that, for any $\alpha$ and any 
$Z\in C^{\infty}(\mathcal{Z}\times[0, \tau_0])$
that is compactly supported in $E_{\rad_0} \times[0, \tau_0)$ and vanishes on $E_{\rad_0}\times\{0\}$, the inequality 
 \begin{align}\label{eq:pdeintident2}
 \begin{split}
   &\iint_{\Ec_{\rad_0}^{\tau_0}}\,\frac{\sigma^{1-\alpha}}{\dot{\sigma}}\big(Q_3(\nabla Z, \nabla Z) + Q_4(Z, Z) 
 - 2E(Z, \nabla Z)\big)G\,d\mu\,d\tau\\
 &\quad\quad
 \leq \iint_{\Ec_{\rad_0}^{\tau_0}}\frac{\sigma^{1-\alpha}}{\dot{\sigma}}
 	\left|\pd{Z}{\tau} + \Delta Z\right|^2\,G\,d\mu\,d\tau
 \end{split}
 \end{align}
holds, where
 \begin{align} 
 \begin{split}\label{eq:q3def}
   Q_3(\nabla Z, \nabla Z)  &\geq \left(2\nabla_i\nabla_j\phi  
     -\frac{\sigma}{\dot{\sigma}}\ddot{\widehat{\log\sigma}}g_{ij}\right)\langle \nabla_i Z, \nabla_j Z\rangle
    -\frac{N}{r_c^2}|\nabla Z|^2,
\end{split}
\end{align}
and
\begin{align} 
\begin{split}\label{eq:q4def}
  Q_4(Z, Z) &\geq \frac{1}{2}\left(\pd{\tilde{F}}{\tau} + \Delta \tilde{F} +\frac{\dot{\theta}}{\theta}\tilde{F}\right)|Z|^2
    -\frac{N}{r_c^2}|F||Z|^2.
\end{split}
\end{align}
\end{lemma}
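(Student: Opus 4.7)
The proof is essentially a mechanical specialization of the divergence identity of Lemma \ref{lem:dividentity2}, integrated against the prescribed weight, with the various terms grouped according to whether they contribute to $Q_3$, $Q_4$, or the commutator $E(Z,\nabla Z)$. First I would substitute the choices $F \dfn \tilde F - \alpha\dot\sigma/\sigma$ (with $\tilde F = G^{-1}(\partial_\tau G - \Delta G) + R$) and $\theta \dfn \sigma/\dot\sigma$ into (\ref{eq:dividentity2}). The first choice kills the coefficient of $(|\nabla Z|^2 + (F/2)|Z|^2)$ produced by the $F - G^{-1}(\partial_\tau G - \Delta G + RG) + \alpha\dot\sigma/\sigma$ factor, and, as recorded in the paragraph preceding the statement, the second choice produces the identity $(\dot\theta/\theta)(\dot\sigma/\sigma) = -\ddot{\widehat{\log\sigma}}$, which removes the $\alpha$-dependence from $\partial_\tau F + \Delta F + (\dot\theta/\theta)F$ and replaces it by $\partial_\tau \tilde F + \Delta \tilde F + (\dot\theta/\theta)\tilde F$.

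Next I would integrate over $\Ec_{\rad_0}^{\tau_0}$. The spatial divergence on the left vanishes because $Z(\cdot,\tau)$ has compact support in $E_{\rad_0}$ for every $\tau$. The time-boundary contribution at $\tau = 0$ vanishes by the hypothesis $Z(\cdot,0)\equiv 0$, and at $\tau = \tau_0$ by the assumption that $Z$ is compactly supported in $E_{\rad_0}\times[0,\tau_0)$. Completing the square via
\begin{equation*}
2\langle \mcA Z, (\partial_\tau + \Delta)Z\rangle - 2|\mcA Z|^2 \leq \tfrac{1}{2}\left|(\partial_\tau + \Delta)Z\right|^2,
\end{equation*}
and then multiplying through by $\theta\sigma^{-\alpha}G = (\sigma^{1-\alpha}/\dot\sigma)G$, converts the $\mcA Z$ pair into the right-hand side of (\ref{eq:pdeintident2}).

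The residual lower-order terms on the right of (\ref{eq:dividentity2}) then regroup as follows. The contributions $-2\nabla_i\nabla_j\phi\langle\nabla_iZ,\nabla_jZ\rangle$ and $-(\dot\theta/\theta)|\nabla Z|^2$ combine, using $\dot\theta/\theta = -(\sigma/\dot\sigma)\ddot{\widehat{\log\sigma}}$, into the leading part of $Q_3$, while the error $-\pd{g}{\tau}(\nabla Z,\nabla Z) = -2\Rc(g)(\nabla Z,\nabla Z)$ (in the induced bundle metric sense) is dominated in absolute value by $Nr_c^{-2}|\nabla Z|^2$ thanks to (\ref{eq:curvdecay}) and (\ref{eq:hrbounds}); this accounts for the $-N r_c^{-2}|\nabla Z|^2$ slack term in (\ref{eq:q3def}). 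Similarly, the coefficient of $|Z|^2$ gives the leading $\tfrac12(\partial_\tau \tilde F + \Delta\tilde F + (\dot\theta/\theta)\tilde F)|Z|^2$ of $Q_4$, while the leftover $-(F/2)\pd{g}{\tau}(Z,Z)$ is bounded by $Nr_c^{-2}|F||Z|^2$, again by (\ref{eq:curvdecay}). Finally, the commutator $2E(Z,\nabla Z)$ appears unchanged, giving the $-2E(Z,\nabla Z)$ term on the left of (\ref{eq:pdeintident2}).

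There is no real obstacle here; the exercise is primarily bookkeeping. The two subtleties worth double-checking are (a) that the $\alpha$ cancellation in the $|Z|^2$ coefficient is effected correctly so that $Q_4$ is written purely in terms of $\tilde F$ rather than $F$, and (b) that the curvature-induced error terms from $\pd g/\pd\tau = 2\Rc(g)$ on the bundles $\mc Z$ and $T^\ast E_{\rad_0}\otimes\mc Z$ are of the claimed order $r_c^{-2}$, which is exactly the content of (\ref{eq:curvdecay}) together with $h\asymp r_c$. The constant $N$ therefore depends only on $n$, $\kappa$, $\nu$, and $K_0$, as stated.
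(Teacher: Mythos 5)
Your proposal is correct and follows essentially the same route the paper takes: substitute $F = \tilde F - \alpha\dot\sigma/\sigma$ and $\theta = \sigma/\dot\sigma$ into the divergence identity of Lemma~\ref{lem:dividentity2}, integrate away the boundary terms using the compact support and the vanishing at $\tau = 0$, complete the square in $\mcA Z$ against $(\partial_\tau + \Delta)Z$, and absorb the $\pd{g}{\tau}$-induced curvature errors (bounded via \eqref{eq:curvdecay} and $h \asymp r_c$) into the $N r_c^{-2}$ slack terms of $Q_3$ and $Q_4$. The only cosmetic difference is that your completing-the-square gives the sharper coefficient $\tfrac12$ on the right of \eqref{eq:pdeintident2}, whereas the paper states it with coefficient $1$; both are valid, and the bookkeeping of the $\alpha$-cancellation in the $|Z|^2$ coefficient and of the curvature error terms is exactly as you describe.
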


In order to use the above inequality to control $|\nabla Z|$ above by $|(\partial_{\tau} + \Delta)Z|$,
we require an additional inequality to help us estimate $|Z|$ above by controllably small multiples of
$|\nabla Z|$ and $|(\partial_{\tau}+ \Delta)Z|$.
Its proof is very simple. Observe that, on one hand, we have the identity
\begin{align}\label{eq:dividentity3}
\begin{split}
  \left(\pdtau + \Delta\right)|Z|^2 = \pd{g}{\tau}(Z, Z) 
+ 2\left\langle \pd{Z}{\tau} + \Delta Z, Z\right\rangle + 2|\nabla Z|^2, 
\end{split}
\end{align}
while on the other (with $\tilde{F} = G^{-1}(\partial_{\tau}G -\Delta G) + R$ as before), we have
\begin{align}\label{eq:dividentity4}
\begin{split}
    &\sigma^{-2\alpha}\left(\pdtau + \Delta\right)|Z|^2 G\,d\mu 
 =  \sigma^{-2\alpha}\nabla_i\left(\nabla_i|Z|^2 G - |Z|^2\nabla_iG\right)\,d\mu\\
&\qquad\qquad+\pdtau\bigg\{\sigma^{-2\alpha}|Z|^2 G\,d\mu \bigg\} +\sigma^{-2\alpha}\left(
  2\alpha\frac{\dot{\sigma}}{\sigma}- \tilde{F}\right)|Z|^2G \,d\mu.
\end{split}
\end{align}
The inequality follows by integrating \eqref{eq:dividentity4} over $\Ec_{\rad_0}^{\tau_0}$ for appropriately supported sections $Z$, and using \eqref{eq:dividentity3} 
together with Cauchy-Schwarz.
\begin{lemma}\label{lem:pdeintineq3} There exists a constant $N = N(n, \kappa, \nu, K_0)$ such that,
for all $\alpha > 0$, all smooth positive $G = G(x, \tau)$, and all positive increasing $\sigma = \sigma(\tau)$, we have the inequality
\begin{align}
\begin{split}\label{eq:pdeintineq3}
  &\iint_{\Ec_{\rad_0}^{\tau_0}}\,\sigma^{-2\alpha}\left(\alpha\frac{\dot{\sigma}}{\sigma}-\tilde{F} - \frac{N}{r_c^2}\right)|Z|^2\,G\,d\mu\,d\tau\\
  &\qquad \leq\iint_{\Ec_{\rad_0}^{\tau_0}}
    \,\sigma^{-2\alpha}\left(2|\nabla Z|^2+\frac{\sigma}{\alpha\dot{\sigma}}\left|\pd{Z}{\tau} + \Delta Z\right|^2
	    \right)G\,d\mu\,d\tau
\end{split}
\end{align}
for all $Z\in C^{\infty}(\mathcal{Z}\times[0, \tau_0])$ with compact support in
$E_{\rad_0}\times[0, \tau_0)$ vanishing on $E_{\rad_0}\times\{0\}$.
\end{lemma}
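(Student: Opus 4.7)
The proof follows directly from the two identities stated immediately before the lemma, and the strategy is already telegraphed there. My plan is to integrate \eqref{eq:dividentity4} over $\Ec_{\rad_0}^{\tau_0}$, observe that the boundary contributions vanish, then substitute \eqref{eq:dividentity3} to replace the parabolic quantity $(\partial_\tau + \Delta)|Z|^2$ with $2|\nabla Z|^2$ plus a cross term plus a curvature error, and finally handle the cross term by a weighted Cauchy--Schwarz.

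First, integration of the spatial divergence $\nabla_i(\nabla_i|Z|^2 G - |Z|^2\nabla_i G)$ against $d\mu$ over $E_{\rad_0}$ vanishes because $Z(\cdot,\tau)$ is compactly supported in $E_{\rad_0}$ for each fixed $\tau$. Similarly, integrating $\partial_\tau\{\sigma^{-2\alpha}|Z|^2 G\,d\mu\}$ over $[0,\tau_0]$ gives zero: the value at $\tau = 0$ vanishes by the hypothesis $Z(\cdot,0)\equiv 0$, and the value at $\tau = \tau_0$ vanishes because $Z$ is compactly supported in $E_{\rad_0}\times[0,\tau_0)$, forcing $Z(\cdot,\tau_0)\equiv 0$. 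This reduces \eqref{eq:dividentity4} to
\begin{equation*}
  \iint_{\Ec_{\rad_0}^{\tau_0}} \sigma^{-2\alpha}(\partial_\tau + \Delta)|Z|^2\,G\,d\mu\,d\tau
    = \iint_{\Ec_{\rad_0}^{\tau_0}} \sigma^{-2\alpha}\left(2\alpha\frac{\dot{\sigma}}{\sigma}-\tilde{F}\right)|Z|^2\, G\,d\mu\,d\tau.
\end{equation*}

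Next I will substitute the pointwise identity \eqref{eq:dividentity3}, which expresses $(\partial_\tau + \Delta)|Z|^2$ as $2|\nabla Z|^2 + 2\langle \partial_\tau Z + \Delta Z, Z\rangle + \partial_\tau g(Z,Z)$. The last term, being equal to a contraction of $\partial_\tau g = 2\Rc(g)$ with $Z\otimes Z$ in the bundle metric on $\mathcal Z$, is pointwise bounded by $N r_c^{-2}|Z|^2$ thanks to \eqref{eq:curvdecay} (with constants depending on $\kappa$ and $\nu$ through the induced metric). The cross term I will split by the weighted Cauchy--Schwarz inequality
\begin{equation*}
  2|\langle \partial_\tau Z + \Delta Z, Z\rangle| \leq \alpha\frac{\dot{\sigma}}{\sigma}|Z|^2 + \frac{\sigma}{\alpha\dot{\sigma}}\left|\pd{Z}{\tau} + \Delta Z\right|^2,
\end{equation*}
which is legitimate since $\dot\sigma > 0$ and $\alpha > 0$. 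Collecting the $|Z|^2$ terms on the left (where the $2\alpha\dot\sigma/\sigma$ on the right of the integrated identity combines with $-\alpha\dot\sigma/\sigma$ from Cauchy--Schwarz to leave the $\alpha\dot\sigma/\sigma$ coefficient in \eqref{eq:pdeintineq3}) and placing the $|\nabla Z|^2$ and $|(\partial_\tau + \Delta)Z|^2$ terms on the right gives exactly the claimed inequality.

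There is no substantive obstacle: the proof is essentially just integration by parts in space and in time combined with one Cauchy--Schwarz. The role of this lemma in the paper is complementary to Lemma \ref{lem:pdeintineq2}, since the two together will control both $|Z|$ and $|\nabla Z|$ by $|(\partial_\tau + \Delta)Z|$ in a single weighted $L^2$ norm, which is what is needed to extract the requisite rapid decay. The only subtle point to flag is the necessity of compact support in $[0,\tau_0)$ rather than just in $[0,\tau_0]$, which is what kills the time-boundary contribution at $\tau=\tau_0$.
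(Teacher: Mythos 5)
Your proof is correct and follows exactly the route the paper itself sketches: integrate \eqref{eq:dividentity4}, drop the spatial divergence and time-boundary contributions thanks to the support hypotheses, substitute \eqref{eq:dividentity3}, bound $\partial_\tau g(Z,Z)$ via $|\Rc|\le K_0 r_c^{-2}$, and split the cross term by weighted Cauchy--Schwarz with weight $\alpha\dot\sigma/\sigma$. You have merely spelled out what the paper leaves implicit; the bookkeeping of signs, the source of the $N r_c^{-2}$ constant (through $\partial_\tau g = 2\Rc$ on $\mathcal Z$), and the remark about support in $[0,\tau_0)$ are all accurate.
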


\subsection{An approximate solution to the conjugate heat equation}

Let $h: \Ec_{\rad_0}^{\tau_0} \to \RR$ be as defined in Section \ref{ssec:hdef} and, for any $a\in (0, 1)$ and $\rho\in (\rad_0, \infty)$, define
\begin{equation}\label{eq:g2def}
    G_2(x, \tau) \dfn G_{2; a, \rho}(x, \tau) = (\tau + a)^{-n/2}\exp{\left(-\frac{(h(x, \tau) - \rho)^2}{4(\tau + a)}\right)}
\end{equation}
on $\Ec_{\rad_0}^{\tau_0}$. In view of the bounds \eqref{eq:hrbounds}, $G_{2}$ is localized around the set $\{\,r_c(x) = \rho\}$, and, 
in a manner we will make precise below, approximately solves the (forwards) conjugate heat equation in $\tau$.  Note that $G_{2; 0,0} = \tau^{-n/2}e^{-f}$ exactly
satisfies
\[
  \partial_{\tau}G_{2; 0, 0} - \Delta G_{2; 0, 0} + RG_{2; 0, 0} = 0.
\]

\subsubsection{Estimates on the derivatives of $G_2$.}
We first use Lemma \ref{lem:hder} to compute the derivatives of $G_2 = G_{2; a, \rho}$.  We have
\begin{align}
\label{eq:pdtauG}
\begin{split}
G_2^{-1}\pd{G_2}{\tau} = \frac{(h-\rho)^2}{4(\tau + a)^2} - \frac{\tau R(h-\rho)}{h(\tau + a)} - \frac{n}{2(\tau+a)},
\end{split}
\end{align}
and since $G_2^{-1}\nabla G_2 = -(h-\rho)/(2(\tau+a))\nabla h$, we compute that
\begin{align}
\begin{split}
\label{eq:hessG}
  G_2^{-1}\nabla\nabla G_2 
		 &= -\frac{1}{2(\tau+a)}g + \frac{\tau}{(\tau+a)}\Rc(g) 
      + \frac{\rho}{2(\tau+a)}\nabla\nabla h \\
  &\phantom{=}+\frac{(h-\rho)^2}{4(\tau+a)^2}\nabla h\otimes\nabla h
\end{split}
\end{align}
and
\begin{align}
\begin{split}\label{eq:deltaG}
 G^{-1}_2\Delta G_2
  &= -\frac{n}{2(\tau+a)} + \frac{\tau R}{\tau+a} + \frac{\rho}{2(\tau+a)}\Delta h + \frac{(h-\rho)^2}{4(\tau+a)^2}|\nabla h|^2.
\end{split}
\end{align}
Thus, combining the above equations, we obtain
\begin{align}
 \begin{split}\nonumber
  G^{-1}_2\left(\pd{G_2}{\tau} - \Delta G_2\right) &= \frac{(h-\rho)^2}{4(\tau + a)^2}\left(1-|\nabla h|^2\right)
  - \frac{\tau R}{\tau+a}\left(2-\frac{\rho}{h}\right)
 - \frac{\rho\Delta h}{2(\tau+a)}
 \end{split}\\
\begin{split}\label{eq:heatG}
  &= \left(\frac{a^2}{(\tau+a)^2} - 1\right)R- \frac{(n-1)\rho}{2h(\tau+a)} \\
  &\phantom{=}
    + \frac{2\tau\rho R}{h(\tau+a)}\left(1 -\frac{\tau}{(\tau+a)} + \frac{\rho\tau}{2h(\tau+a)} -\frac{\tau}{h^2}    \right)
\end{split}
\end{align}
so
\begin{align}
\begin{split}\label{eq:conjheatG}
&G_2^{-1}\left(\pd{G_2}{\tau} - \Delta G_2\right) + R =  - \frac{(n-1)\rho}{2h(\tau+a)} + \frac{a^2R}{(\tau+a)^2}\\
&\qquad\qquad\phantom{=}+ \frac{2\tau\rho R}{h(\tau+a)}\left(\frac{a}{(\tau+a)} + \frac{\rho\tau}{2h(\tau+a)}
    -\frac{\tau}{h^2}\right).
\end{split}
\end{align}
We now combine the above observations with \eqref{eq:hcurvbounds}, using the notation
\[
    F_2 \dfn F_{2; a, \rho} \dfn G_2^{-1}\left(\pd{G_2}{\tau} -\Delta G_2\right) + R -\alpha\frac{\dot{\sigma}}{\sigma},
    \quad\mbox{and}\quad
    \tilde{F}_2 \dfn \tilde{F}_{2; a, \rho}\dfn  F_2  +\alpha\frac{\dot{\sigma}}{\sigma}.
\]

\begin{lemma} For all $a\in (0, 1)$,  $\gamma > 0$, and $\rho > \rad_0\geq 1$,
 there exists a constant $C= C(n, \gamma) > 0$ and $\rad_4 \geq\rad_0$, depending only on $n$, $\gamma$ and $K_0$, such that
$G_2 = G_{2; a, \rho}$ and $\tilde{F}_2 = \tilde{F}_{2; a, \rho}$
 satisfy
\begin{equation}\label{eq:gghatcomp}
 \frac{1}{2}e^{-\frac{(r_c(x)-\rho)^2}{4(\tau+a)}} \leq (\tau+a)^{n/2}G_{2}(x, \tau) \leq 2e^{-\frac{(r_c(x)-\rho)^2}{4(\tau+a)}}, 
\end{equation}
and
\begin{equation}\label{eq:tildeFest}
 - \frac{(n-1)\rho}{2h(\tau+a)} -\frac{1}{8h}\leq\tilde{F}_2 \leq - \frac{(n-1)\rho}{2h(\tau+a)} +\frac{1}{8h} \leq 0,
\end{equation}
and $\phi_2\dfn \log G_2$ satisfies
\begin{equation}\label{eq:hesslogG}
  \nabla\nabla \phi_2 \geq -\frac{g}{2(\tau+a)} - \frac{g}{48}
\end{equation}
 on the set $(E_{\rad_4}\cap E_{\gamma\rho})\times[0, \tau_0]$.
\end{lemma}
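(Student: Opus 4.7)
The plan is to verify each of the three bounds directly from the formulas derived in the preceding computation, exploiting three facts uniformly: the curvature/height bounds \eqref{eq:hcurvbounds}, the comparison $|h-r_c| \leq CK_0\tau^2/r_c^3$ from Lemma \ref{lem:rhcomparison}, and the observation that on $E_{\gamma\rho}$ one has $\rho \leq r_c/\gamma \leq 2h/\gamma$. The common theme is that every ``error'' term comes with either a factor of $h^{-2}$ (multiplied by a constant depending on $n,\gamma,K_0$) or a factor of $\rho/h \leq 2/\gamma$, both of which can be made as small as desired by taking $\rad_4$ large.

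For \eqref{eq:gghatcomp}, since $(\tau+a)^{n/2}G_2 = \exp(-(h-\rho)^2/4(\tau+a))$, it suffices to show that $|(h-\rho)^2 - (r_c-\rho)^2|/(4(\tau+a)) \leq \log 2$. I would factor $(h-\rho)^2 - (r_c-\rho)^2 = (h-r_c)(h + r_c - 2\rho)$, apply Lemma \ref{lem:rhcomparison} to the first factor, and bound the second by $(3+2/\gamma)r_c$ using $h \leq 2r_c$ and $\rho \leq r_c/\gamma$. Combined with $\tau^2/(\tau+a) \leq \tau \leq 1$, the ratio reduces to a quantity of order $K_0(1+\gamma^{-1})/r_c^2$, which falls below $\log 2$ once $\rad_4$ is chosen large in terms of $n,\gamma,K_0$.

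For \eqref{eq:tildeFest} I would start from the already-derived expression \eqref{eq:conjheatG}, which exhibits the leading term $-(n-1)\rho/(2h(\tau+a))$ explicitly. The two remaining contributions,
$\dfrac{a^2R}{(\tau+a)^2}$ and $\dfrac{2\tau\rho R}{h(\tau+a)}\bigl(\tfrac{a}{\tau+a}+\tfrac{\rho\tau}{2h(\tau+a)}-\tfrac{\tau}{h^2}\bigr)$, can each be absolutely bounded by $C(n,\gamma)K_0/h^2$ using $|R| \leq CK_0/h^2$, the trivial inequalities $a/(\tau+a),\tau/(\tau+a) \leq 1$, and $\rho/h \leq 2/\gamma$. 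For $\rad_4$ large this error drops below $1/(8h)$, yielding the two-sided bound on $\tilde F_2$. The sign conclusion is automatic: $\rho \geq 1$ and $\tau+a \leq 2$ force the leading term to be $\leq -1/(4h)$, which beats $+1/(8h)$.

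For \eqref{eq:hesslogG}, I would compute $\nabla\nabla\phi_2 = G_2^{-1}\nabla\nabla G_2 - G_2^{-2}\nabla G_2 \otimes \nabla G_2$; the $\nabla h\otimes\nabla h$ contributions cancel by design, leaving
\[
\nabla\nabla\phi_2 = -\frac{g}{2(\tau+a)} + \frac{\tau}{\tau+a}\Rc + \frac{\rho}{2(\tau+a)}\nabla\nabla h.
\]
Substituting $h\nabla\nabla h = g - 2\tau\Rc - \nabla h\otimes \nabla h$ from Lemma \ref{lem:hder} and invoking the eigenvalue bound $g - \nabla h\otimes\nabla h \geq (1-|\nabla h|^2)g = (4\tau^2R/h^2)g$ from \eqref{eq:hspatial2}, the error beyond $-g/(2(\tau+a))$ splits into three tensor pieces, each of which can be bounded below by $-C(n,\gamma,K_0)h^{-2}g$; crucially, it is precisely the cancellation of the $1/(\tau+a)$ singularity against the $\tau^2$ factor coming from $1-|\nabla h|^2$ that keeps this bound uniform in $a \in (0,1)$. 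Enlarging $\rad_4$ once more forces each piece to be $\geq -g/144$, delivering \eqref{eq:hesslogG}. The only real point of care in the argument is this final $\tau^2$-vs-$(\tau+a)^{-1}$ cancellation; everything else is bookkeeping.
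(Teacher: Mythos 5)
Your proposal is correct and follows essentially the same strategy as the paper's proof: each of the three estimates is reduced, via the identities from Lemma~\ref{lem:hder}, the curvature decay \eqref{eq:hcurvbounds}, the comparison \eqref{eq:rhcomparison}, and the bound $\rho/h \le 2/\gamma$ available on $E_{\gamma\rho}$, to an error of order $K_0/h^2$, which is absorbed by taking $\rad_4$ large. The only cosmetic difference is in \eqref{eq:hesslogG}: you cancel the rank-one $\nabla h\otimes\nabla h$ pieces before substituting for $\nabla\nabla h$ and then invoke $g - \nabla h\otimes\nabla h \ge (1-|\nabla h|^2)g = (4\tau^2 R/h^2)g$, whereas the paper expands to $-g/(2(\tau+a))+\tau(h-\rho)\Rc/(h(\tau+a))+\rho g/(2h(\tau+a))-\rho\nabla h\otimes\nabla h/(2h(\tau+a))$ and bounds the last two pieces together; both arrangements hinge on the same $\tau^2$-versus-$(\tau+a)^{-1}$ cancellation you flag, so the routes are essentially identical.
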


\begin{proof}
First note that, using our curvature decay assumption \eqref{eq:hcurvbounds}, \eqref{eq:rhcomparison},
and our assumption that $a$, $\tau_0 \leq 1$, we have
that
\[
	\frac{(r_c - \rho)^2}{(\tau+a)} - \frac{CK_0\tau^2}{(\tau+a)r_c^3}\leq
	\frac{(h -\rho)^2}{(\tau+a)} \leq \frac{(r_c - \rho)^2}{(\tau+a)} + \frac{CK_0\tau^2}{(\tau+a)r_c^3}
\]		
Thus \eqref{eq:gghatcomp} is valid on $\Ec_{\rad}^{\tau_0}$ for $\rad\geq \rad_0$ sufficiently large. 

For \eqref{eq:tildeFest}, observe that we may
estimate the second term on the right side of \eqref{eq:conjheatG} by
\[
 \left|\frac{a^2R}{(\tau+a)^2}\right| \leq |R| \leq \frac{C^{\prime}K_0}{h^2}, 
\]
for $C^{\prime} = C^{\prime}(n)$
and, using  $\rho h^{-1} \leq 2\rho r_c^{-1} \leq 2\gamma^{-1}$, the third term on the right of the same equation by
\[
 \frac{2\tau\rho |R|}{h(\tau+a)}\left|\frac{a}{(\tau+a)} + \frac{\rho\tau}{2h(\tau+a)} -\frac{\tau}{h^2}\right|
\leq \frac{C^{\prime\prime}K_0}{h^2}
\]
for $C^{\prime\prime}=C^{\prime\prime}(\gamma, n)$
Summing these two inequalities, we see that by choosing $\rad$ large enough, 
we can ensure that $(C^{\prime}+C^{\prime\prime})K_0/h\leq 1/8$ (and, in particular, that $\tilde{F}_2\leq 0$)  
on $\left(E_{\rad}\cap E_{\gamma\rho}\right)\times[0, \tau_0]$. 

For \eqref{eq:hesslogG}, we may use Lemma \ref{lem:hder} to compute that
\begin{align*}  
\nabla\nabla \phi_2 &= -\frac{1}{2(\tau+a)}\left( (h-\rho)\nabla\nabla h + \nabla h\otimes \nabla h\right)\\
		     &=  -\frac{g}{2(\tau+a)} + \frac{\tau \Rc}{(\tau+a)} + \frac{\rho}{2h(\tau+a)}\left(g - 2\tau\Rc - \nabla h\otimes\nabla h\right)\\
		     &= -\frac{g}{2(\tau+a)} + \frac{\tau(h-\rho) \Rc}{h(\tau+a)} + \frac{\rho g}{2h(\tau+a)} -\frac{\rho\nabla h\otimes\nabla h}{2h(\tau+a)},
\end{align*}
and from the bounds $|\Rc| \leq C(n)K_0h^{-2}$ and $|\nabla h|^2 \leq 1 + C(n)K_0\tau^2h^{-4}$ available to us on $\Ec_{\rad_0}^{\tau_0}$,
we can obtain a constant $C^{\prime\prime\prime} = C^{\prime\prime\prime}(n, \gamma)$ such that
\[
  \frac{\tau(h-\rho)\Rc}{h(\tau+a)} \geq  -\frac{C^{\prime\prime\prime}K_0g}{h^{2}}\,\quad\mbox{and}\quad
 \frac{\rho\nabla h\otimes\nabla h}{2h(\tau+a)} \leq \frac{\rho g}{2h(\tau+a)} + \frac{C^{\prime\prime\prime}K_0g}{h^4}.
\]
Thus, for large $\rad$ we may achieve $C^{\prime\prime\prime}K_0/h \leq 1/96$ and hence the inequality
 \eqref{eq:hesslogG} on $\left(E_{\rad}\cap E_{\gamma\rho}\right) \times[0, \tau_0]$.
\end{proof}

\subsubsection{Estimates on the derivatives of $\tilde{F_2}$}
In order to estimate the $Q_4$ term from \eqref{eq:pdeintident2}, we still need to compute 
$(\partial_{\tau} + \Delta)\tilde{F}_2$.  Returning to 
\eqref{eq:conjheatG}, we group terms with like powers of $h^{-1}$ and write $\tilde{F}_2$ in the form
\[
    \tilde{F}_2 = -\frac{(n-1)\rho}{2(\tau + a)h} + R H\left(\frac{\rho}{h}, \tau\right)
\]
where
\[
  H(s, \tau) \dfn \frac{a^2}{(\tau+a)^2} + \frac{2a\tau }{(\tau+a)^2}s
    + \frac{\tau^2}{(\tau+a)^2}s^2 - \frac{2\tau^2}{\rho^2(\tau+a)}s^3.
\]
Then, differentiating, we obtain the equations
\begin{align*}
 \pd{\tilde{F}_2}{\tau} &= \frac{(n-1)\rho}{2(\tau+a)^2h} + \frac{(n-1)\rho}{2(\tau+a)h^2}\pd{h}{\tau}
      +H\pd{R}{\tau} + H_{\tau}R - \frac{\rho H_s R}{h^2}\pd{h}{\tau},\\
\nabla \tilde{F}_2 &= \frac{(n-1)\rho}{2(\tau+a)}\frac{\nabla h}{h^2} + H\nabla R - \frac{\rho H_s R}{h^2}\nabla h,\\
\begin{split}
    \Delta \tilde{F}_2 &= \frac{(n-1)\rho}{2(\tau+a) h^2}\Delta h -\frac{(n-1)\rho}{(\tau+a)h^3}|\nabla h|^2
		+H\Delta R - \frac{2\rho H_s }{h^2}\langle \nabla R, \nabla h\rangle\\
	    &\phantom{=} + \frac{\rho^2H_{ss}R}{h^4}|\nabla h|^2
	    -\frac{\rho H_s R}{h^2}\Delta h + \frac{2\rho H_sR}{h^3}|\nabla h|^2.
\end{split}
\end{align*}
Put together, we have
\begin{align*}
\begin{split}
\pd{\tilde{F}_2}{\tau} + \Delta \tilde{F}_2 &=
  \frac{(n-1)\rho}{2(\tau+a)^2h} + \frac{(n-1)\rho}{2(\tau+a)h^2}\left(\pd{h}{\tau}+ \Delta h 
      - \frac{2|\nabla h|^2}{h}\right)\\
 &\phantom{=}	+ H\left(\pd{R}{\tau} + \Delta R\right)
  + H_{\tau}R - \frac{\rho H_sR}{h^2}\left(\pd{h}{\tau} + \Delta h\right)\\
     &\phantom{=}   -\frac{2\rho H_s}{h^2}\langle \nabla R, \nabla h\rangle
+\frac{\rho R}{h^3}\left(2H_s + \frac{\rho H_{ss}}{h}\right)|\nabla h|^2,
\end{split}
\end{align*}
which, after applying Lemma \ref{lem:hder} and rearranging terms, becomes
\begin{align*}
\begin{split}
\pd{\tilde{F}_2}{\tau} + \Delta \tilde{F}_2&=
  \frac{(n-1)\rho}{2(\tau+a)^2h} + \frac{(n-1)\rho}{2(\tau+a)h^3}\left((n-3) + \frac{12\tau^2R}{h^2}\right)\\
 &\phantom{=}	- 2H|\Rc|^2
  + H_{\tau}R - \frac{\rho H_sR}{h^3}\left((n-1) +\frac{4\tau^2 R}{h^2}\right)\\
     &\phantom{=}   -\frac{2\rho H_s}{h^2}\langle \nabla R, \nabla h\rangle
+\frac{\rho R}{h^3}\left(2H_s + \frac{\rho H_{ss}}{h}\right)\left(1 -\frac{4\tau^2R}{h^2}\right).
\end{split}
\end{align*}

Fortunately, we will not need to analyze the complicated right-hand side of this equation
too carefully.  For our purposes, the dominant term is the first -- the others, as we see next, are either of lower order in $(\tau+a)^{-1}$, or higher order
in $h^{-1}$ (e.g., through factors of $|\Rc|$ or $R$) and can be made to be as small as we like
after further shrinking our end by a fixed amount.

\begin{lemma}\label{lem:bheatF}
  For all $a\in (0, 1)$, $\gamma > 0$, and $\rho > \rad_0$,
 there exist constants $N$ and $\rad_5 \geq\rad_0$ both depending only on $n$, $\gamma$ and $K_0$ such that 
$\tilde{F}_2 = \tilde{F}_{2; a,\rho}$ satisfies
\begin{equation}\label{eq:bheatF}
  \pd{\tilde{F}_2}{\tau} + \Delta \tilde{F}_2 \geq \frac{(n-1)\rho}{2h(\tau+a)^2} - \frac{N}{(\tau+a)}
\end{equation}
 on the set $\left(E_{\rad_5}\cap E_{\gamma\rho}\right)\times[0, \tau_0]$.
\end{lemma}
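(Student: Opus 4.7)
The plan is to start from the explicit identity for $(\partial_\tau + \Delta)\tilde{F}_2$ derived just above the statement of the lemma and show that the first term $\frac{(n-1)\rho}{2h(\tau+a)^2}$ is the dominant one, with all other terms bounded below by $-N/(\tau+a)$ once $\rad_5$ is taken large enough.

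First, I would treat the auxiliary function $H(s,\tau)$ and its derivatives $H_\tau$, $H_s$, $H_{ss}$. On the set $E_{\gamma\rho}$ we have $s = \rho/h \leq 2\rho/r_c \leq 2/\gamma$, using the comparison $h \geq r_c/2$ from \eqref{eq:hrbounds}. Since $a,\tau \leq 1$, the ratios $a/(\tau+a)$ and $\tau/(\tau+a)$ are bounded by $1$. Thus $|H|$, $|H_s|$, $|H_{ss}|$ are all bounded by a constant $C(\gamma,n)$, while direct differentiation shows that $|H_\tau|$, $|\partial_\tau H_s|$, etc. are bounded by $C(\gamma,n)/(\tau+a)$ (the worst terms being of the form $a^2/(\tau+a)^3 = (a/(\tau+a))^2 \cdot 1/(\tau+a)$).

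Next I would estimate each of the remaining summands in the displayed identity for $(\partial_\tau + \Delta)\tilde{F}_2$ using the curvature decay bounds from part (3) of Proposition \ref{prop:reduction}, which give
\[
|R| + |\Rc| \leq \frac{CK_0}{h^2}, \qquad |\Rc|^2 \leq \frac{CK_0^2}{h^4}, \qquad |\nabla R| \leq \frac{CK_0}{h^3},
\]
together with $|\nabla h|^2 \leq 1 + CK_0 \tau^2 h^{-4}$ from \eqref{eq:hspatial2} and $\rho/h \leq 2/\gamma$. Running through the terms one at a time, the secondary term of the form $\frac{(n-1)\rho}{2(\tau+a)h^3}((n-3) + 12\tau^2R/h^2)$ is $O(1/(h^2(\tau+a)))$; the $H|\Rc|^2$ term is $O(1/h^4)$; the $H_\tau R$ term is $O(1/(h^2(\tau+a)))$ using the bound on $H_\tau$ above; the $\rho H_s R/h^3$ term is $O(1/h^4)$ after using $\rho/h \leq 2/\gamma$; the $\rho H_s \langle \nabla R, \nabla h\rangle/h^2$ term is $O(1/h^5)$; and the last term $\rho R(2H_s + \rho H_{ss}/h)/h^3$ is again $O(1/h^4)$.

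Collecting these estimates, everything other than the dominant first term of the identity is bounded below by $-C(\gamma,n,K_0)\left(h^{-2}(\tau+a)^{-1} + h^{-4}\right)$. Using $a,\tau \leq 1$ so that $1 \leq 2/(\tau+a)$, we may absorb the $O(h^{-4})$ contribution into the $O(h^{-2}(\tau+a)^{-1})$ one. Choosing $\rad_5 \geq \rad_0$ large enough (depending only on $n$, $\gamma$, $K_0$) so that this prefactor is absorbed into a uniform constant $N$ yields the claimed inequality on $(E_{\rad_5} \cap E_{\gamma\rho})\times[0,\tau_0]$. The main (though routine) obstacle is simply the bookkeeping: keeping the $h$- and $(\tau+a)$-scalings straight across the six error terms so that no term produces a divergence worse than $1/(\tau+a)$.
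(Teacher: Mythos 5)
Your argument is correct and follows essentially the route the paper intends: bound $H$, $H_s$, $H_{ss}$ by constants (and $H_\tau$ by $C(n,\gamma)/(\tau+a)$) on $E_{\gamma\rho}$ using $\rho/h\lesssim\gamma^{-1}$, then use the quadratic curvature decay and the identities for the derivatives of $h$ to show every error term is $O\!\left(h^{-2}(\tau+a)^{-1}\right)$ or smaller. The only slip is in your bound on the term $-2\rho H_s\langle\nabla R,\nabla h\rangle/h^2$: writing it as $O(h^{-5})$ silently retains a factor of $\rho$ in the implied constant (which is not permitted, since $N$ and $\mathcal R_5$ may not depend on $\rho$); using $\rho\leq 2h/\gamma$ instead gives the correct $\rho$-uniform bound $O(h^{-4})$, which the remainder of your argument already absorbs, so the conclusion is unaffected.
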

\begin{proof}
Observe that, for $\rad\geq \rad_0$ sufficiently large (depending only on $\gamma$), we can ensure that
\[
    |H(\rho h^{-1}, \tau)| + |H_{s}(\rho h^{-1}, \tau)| + |H_{ss}(\rho h^{-1}, \tau)| \leq C(n)(\gamma^{-1}+\gamma^{-3})
\]
on $E_{\rad}\cap E_{\gamma\rho}\times[0, \tau]$. Indeed, estimating $\tau/(\tau+ a)$
and $a/(\tau+a)$ above by $1$, and using that $h/\rho \leq \gamma^{-1}$, we see that each term in $H$ is bounded above
by a constant depending only on $\gamma$, and the statements for $H_{s}(\rho h^{-1}, \tau)$ 
and $H_{ss}(\rho h^{-1}, \tau)$ follow from the fact that $H$ is polynomial in $s$.  By similar reasoning,
we obtain a bound of the form $|H_{\tau}|\leq C(n)(\gamma^{-1} + \gamma^{-3})(\tau+a)^{-1}$ for analogously
restricted $(x, \tau)$.  So $|H_{\tau}R| \leq C(n, \gamma)/(\tau+a)$ on $\Ec_{\rad}^{\tau_0}$ for $\rad$ 
taken sufficiently large (depending on $n$, $\gamma$, and $K_0$). Using Lemma \ref{lem:hder} and \eqref{eq:hcurvbounds},
we can bound all the remaining terms similarly.
\end{proof}

\subsection{A Carleman inequality for the PDE component}
Now we return to the integral inequality \eqref{eq:pdeintident2} and substitute $G_2 = G_{2; a, \rho}$
for the weight $G$. For the time-dependent weight $\sigma$, following \cite{EscauriazaSereginSverak}, we define
$\sigma(\tau) \dfn \tau e^{-\tau/3}$ and its translates $\sigma_a(\tau) \dfn \sigma(\tau +a)$ for $a\in (0, 1)$.
Note that $\sigma_a$ is approximately linear in that
\begin{align}
\begin{split}
\label{eq:sigmabounds}
 &\frac{1}{3e}(\tau + a) \leq \sigma_a(\tau) \leq (\tau + a) \quad\mbox{and}\quad \frac{1}{3e} \leq \dot{\sigma}_a(\tau) \leq 1\\
\end{split}
\end{align}
for $\tau\in [0, 1]$. Additionally, $\sigma_{a}$ satisfies 
\begin{equation}\label{eq:logthetadot}
   -\frac{\sigma_a}{\dot{\sigma}_a}\ddot{\widehat{\log \sigma_a}}
	=\frac{1}{(\tau + a)(1-(1/3)(\tau+a))}.
\end{equation}

We now verify lower bounds for the forms $Q_3$ and $Q_4$ from Lemma \ref{lem:pdeintineq2}.
\begin{lemma}\label{lem:quadlowerbounds}
For all $\alpha > 0$, $a\in (0, 1)$, and $\gamma > 0$, there exist $N$ and $\rad_6\geq \rad_0$ depending on $n$, $\gamma$, $\kappa$, $\nu$, and $K_0$, 
such that the quadratic forms $Q_3$ and $Q_4$ from \eqref{eq:q3def} and \eqref{eq:q4def} and the commutator term
$E(Z, \nabla Z)$ from \eqref{eq:edef} (depending on $\phi_{2; a, \rho} = \log G_{2; a, \rho}$)
satisfy
\begin{equation}\label{eq:q3q4ineq}
 Q_3(\nabla Z, \nabla Z) + Q_4(Z, Z) -2 E(Z, \nabla Z) \geq 
\frac{1}{4}|\nabla Z|^2 -\sigma_a^{-1}\left(N + \frac{\alpha}{10000}\right)|Z|^2
\end{equation}
on $\left(E_{\rad_6}\cap E_{\gamma\rho}\right)\times[0, \tau_0]$.
\end{lemma}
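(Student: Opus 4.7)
The plan is to estimate the three pieces of $Q_3+Q_4-2E$ separately on $(E_{\rad_6}\cap E_{\gamma\rho})\times [0,\tau_0]$ and then combine them, absorbing small cross-terms by choosing $\rad_6$ large. Throughout we use that $\sigma_a\leq\tau+a\leq 2$, so $1-(\tau+a)/3\geq 1/3$ and $\sigma_a^{-1}\geq(\tau+a)^{-1}$.

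For $Q_3$, a direct computation from $\sigma_a(\tau)=(\tau+a)e^{-(\tau+a)/3}$ gives $\theta=\sigma_a/\dot\sigma_a=(\tau+a)/(1-(\tau+a)/3)$, matching \eqref{eq:logthetadot}. Substituting the Hessian bound \eqref{eq:hesslogG} for $\phi_2$ and this into \eqref{eq:q3def} yields
\[
Q_3(\nabla Z,\nabla Z)\geq \Bigl(\frac{1}{(\tau+a)(1-(\tau+a)/3)}-\frac{1}{\tau+a}-\frac{1}{24}-\frac{N}{r_c^2}\Bigr)|\nabla Z|^2
=\Bigl(\frac{1}{3-(\tau+a)}-\frac{1}{24}-\frac{N}{r_c^2}\Bigr)|\nabla Z|^2,
\]
which is at least $7/24-N/r_c^2$ and therefore at least $7/24-1/48$ once $r_c^2\geq 48N$.

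For $Q_4$, the key cancellation is the following: \eqref{eq:bheatF} produces the positive leading term $(n-1)\rho/(2h(\tau+a)^2)$, while by \eqref{eq:tildeFest} and the identity $\dot\theta/\theta=[(\tau+a)(1-(\tau+a)/3)]^{-1}$, the factor $(\dot\theta/\theta)\tilde F_2$ contributes something of the same order and opposite sign. Algebraically,
\[
\partial_\tau\tilde F_2+\Delta\tilde F_2+\frac{\dot\theta}{\theta}\tilde F_2
\geq \frac{(n-1)\rho}{2h(\tau+a)^2}\Bigl(1-\frac{1}{1-(\tau+a)/3}\Bigr)-\frac{N}{\tau+a}-\frac{1}{8h(\tau+a)(1-(\tau+a)/3)},
\]
and the factor $(1-(1-(\tau+a)/3)^{-1})=-(\tau+a)/(3(1-(\tau+a)/3))$ reduces the first term from order $(\tau+a)^{-2}$ to order $(\tau+a)^{-1}$; after invoking $\rho/h\leq\gamma^{-1}$ on $E_{\gamma\rho}$, the whole right side is bounded below by $-N(\gamma)/(\tau+a)$. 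For the $|F_2|/r_c^2$ term in \eqref{eq:q4def} we note $|F_2|\leq|\tilde F_2|+\alpha\dot\sigma_a/\sigma_a\leq (N(\gamma)+\alpha)/(\tau+a)$, so choosing $r_c^2\geq 10000 N$ gives $Q_4(Z,Z)\geq -\sigma_a^{-1}(N+\alpha/10000)|Z|^2$ (with $N=N(n,\gamma,\kappa,\nu,K_0)$).

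For the commutator $E(Z,\nabla Z)$, differentiating $\phi_2=\log G_2$ and invoking Lemma \ref{lem:hder} gives $\nabla\phi_2=\Upsilon\,\nabla f$ with $\Upsilon=-\tau(h-\rho)/(h(\tau+a))$, so $|\Upsilon|\leq 1+\gamma^{-1}$ uniformly on $E_{\gamma\rho}$. Remark \ref{rem:commutator} then yields $|E(Z,\nabla Z)|\leq C(\gamma,n,\kappa,\nu)|\nabla\Rc|(|\nabla Z|^2+|Z|^2)$, and \eqref{eq:curvdecay} gives $|\nabla\Rc|\leq N/r_c^3$; thus on $E_{\rad_6}$, $2|E(Z,\nabla Z)|\leq\frac{1}{48}|\nabla Z|^2+r_c^{-1}|Z|^2$. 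Adding the three estimates gives $Q_3+Q_4-2E\geq (7/24-1/24)|\nabla Z|^2-\sigma_a^{-1}(N+\alpha/10000)|Z|^2=\frac{1}{4}|\nabla Z|^2-\sigma_a^{-1}(N+\alpha/10000)|Z|^2$, as claimed. The main obstacle is verifying that the $(\tau+a)^{-2}$ singularity in the heat-type estimate \eqref{eq:bheatF} is exactly cancelled by the corresponding contribution from $(\dot\theta/\theta)\tilde F_2$; this cancellation is the structural reason for choosing $G_2$ close to the fundamental solution of the conjugate heat equation and for the particular form of $\sigma_a$.
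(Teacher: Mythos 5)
Your proposal is correct and follows essentially the same route as the paper: it bounds $Q_3$ via \eqref{eq:hesslogG} and \eqref{eq:logthetadot}, exploits the same order-reduction cancellation between the $(n-1)\rho/(2h(\tau+a)^2)$ term from \eqref{eq:bheatF} and the $(\dot\theta/\theta)\tilde F_2$ contribution via \eqref{eq:tildeFest}, writes $\nabla\phi_2=\Upsilon\nabla f$ with $\Upsilon$ bounded on $E_{\gamma\rho}$ to control $E(Z,\nabla Z)$ through Remark \ref{rem:commutator}, and absorbs all residual terms by enlarging $\rad_6$. The one slip is the claim $|\Upsilon|\leq 1+\gamma^{-1}$: since $h\geq r_c/2>\gamma\rho/2$ on $E_{\gamma\rho}$, the correct bound is $|\Upsilon|\leq 1+2\gamma^{-1}$, but since only $\Upsilon^2\leq C(\gamma)$ is used, this does not affect the argument.
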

\begin{proof}
We begin by assuming that $\rad > \max\{\rad_3, \rad_4, \rad_5\}$.  In the argument that follows,
$N$ will denote a sequence of positive constants depending on $n$, $\gamma$, $\kappa$, $\nu$, and $K_0$
that may vary from line to line.

First, from \eqref{eq:q3def}, \eqref{eq:hesslogG}, and \eqref{eq:logthetadot} we have
\begin{align}
\nonumber
  Q_3(\nabla Z, \nabla Z) &\geq \left(2\nabla_i\nabla_j\phi -\frac{\sigma_a}{\dot{\sigma}_a}\ddot{\widehat{\log \sigma_a}}g_{ij}
    -\frac{N}{r_c^2}g_{ij}\right)\langle \nabla_i Z, \nabla_j Z\rangle\\
\nonumber
			  &\geq \left(\frac{1}{(\tau + a)(1-(1/3)(\tau+a))} - \frac{1}{(\tau+a)} -\frac{1}{24} 
			  - \frac{N}{r_c^2}\right)|\nabla Z|^2\\
\label{eq:q3est}
			  &\geq \left(\frac{7}{24}- \frac{N}{r_c^2}\right)|\nabla Z|^2
\end{align}
on $\left(E_{\rad}\cap E_{\gamma\rho}\right)\times[0, \tau_0]$.
Similarly, from \eqref{eq:q4def}, on the same set we have
\begin{align*}
 Q_4(Z, Z) &\geq \frac{1}{2}\left(\pd{\tilde{F}_2}{\tau} + \Delta \tilde{F}_2 +\frac{\dot{\theta}}{\theta}\tilde{F}_2\right)|Z|^2
-\frac{N|F_2|}{r_c^2}|Z|^2,
\end{align*}
while from \eqref{eq:tildeFest},
\eqref{eq:bheatF}, and \eqref{eq:logthetadot}, we have
\begin{align*}
 \frac{1}{2}\left(\pd{\tilde{F}_2}{\tau} + \Delta \tilde{F}_2 +\frac{\dot{\theta}}{\theta}\tilde{F}_2\right)
&\geq \frac{(n-1)\rho}{4h(\tau+a)}\left(\frac{1}{\tau+a} - \frac{1}{(\tau + a)(1-(1/3)(\tau+a))}\right)\\
  &\phantom{\geq}
      - \frac{N}{(\tau+a)} - \frac{1}{(\tau+a)(1-(1/3)(\tau+a))}\frac{1}{16h}\\
  &\geq -N\sigma_a^{-1}.
\end{align*}
Since $|\tilde{F}_2|\leq C(n, \gamma)\sigma_a^{-1}$ on $\left(E_{\rad_0}\cap E_{\gamma\rho}\right)\times [0, \tau_0]$ by \eqref{eq:tildeFest},
we have
\[
 |F_2| = |\tilde{F}_2 - \alpha\dot{\widehat{\log \sigma_a}}|\leq (C(n,\gamma) + \alpha)\sigma_a^{-1}
\]
and therefore, from the previous two inequalities and \eqref{eq:sigmabounds}, that
\begin{equation}\label{eq:q4est}
  Q_4(Z, Z) \geq -\sigma_a^{-1}\left(N +  \frac{\alpha}{10000}\right)|Z|^2
\end{equation}
on $E_{\rad}\cap E_{\gamma\rho}\times[0, \tau_0]$ for $\rad$ sufficiently large. 

Now observe that we may write
\[
  \nabla \phi_2 = -\frac{(h-\rho)\nabla h}{2(\tau+a)} =  -\frac{\tau(h-\rho)}{h(\tau+a)}\nabla f \dfn \Upsilon_2\nabla f,
\]
where $\Upsilon_2^2 \leq C(n, \gamma)$ on $\Ec_{\gamma\rho}^{\tau_0}$. By \eqref{eq:eequation2} we then have
\begin{equation*}
  |E(Z, \nabla Z)| \leq \frac{N}{r_c^2}(|\nabla Z|^2 + (1+ \Upsilon_2^2)|Z|^2) \leq \frac{N}{r_c^2}(|\nabla Z|^2 + |Z|^2), 
\end{equation*}
which, with \eqref{eq:q3est} and \eqref{eq:q4est}, implies \eqref{eq:q3q4ineq} after increasing $\rad$ still further.
\end{proof}

We are now ready to prove our second Carleman estimate.
\begin{proposition}\label{prop:pdecarleman2}
For any $a\in (0, 1)$, $\gamma > 0$, and $\rho \geq \rad_0$, there exists
$C > 0$ depending only on $n$, and $\alpha_0 > 0$, $\rad_7 > \rad_0$ depending
only on $n$, $\gamma$, $\kappa$, $\nu$, and $K_0$ such that for any smooth section
$Z$ of $\mathcal{Z}\times [0, \tau_0]$ which is compactly supported in $\left(E_{\rad_7}\cap E_{\gamma\rho}\right)\times[0, \tau_0)$,
and satisfies $Z(\cdot, 0) \equiv 0$, we have
\begin{align}
\begin{split}
\label{eq:pdecarleman2}
 &\sqrt{\alpha}\|\sigma_a^{-\alpha-1/2}Z\hat{G}_{2}^{1/2}\|_{L^2(\Ec_{\rad_0}^{\tau_0})}
  +\|\sigma_a^{-\alpha}\nabla Z\hat{G}_{2}^{1/2}\|_{L^2(\Ec_{\rad_0}^{\tau_0})}\\
  &\qquad\qquad\leq C\|\sigma_a^{-\alpha}(\partial_{\tau}Z+\Delta Z)\hat{G}_{2}^{1/2}\|_{L^2(\Ec_{\rad_0}^{\tau_0})}
\end{split}
\end{align}
for all $\alpha \geq \alpha_0$, where 
\[
    \hat{G}_2(x, \tau) \dfn \hat{G}_{2; a, \rho}(x, \tau) \dfn \exp{\left(-\frac{(r_c(x) - \rho)^2}{4(\tau+a)}\right)}.
\]
\end{proposition}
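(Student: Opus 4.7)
The plan is to derive \eqref{eq:pdecarleman2} by combining Lemmas \ref{lem:pdeintineq2} and \ref{lem:pdeintineq3} --- applied with $G = G_{2;a,\rho}$, $\sigma = \sigma_a$, and $\theta = \sigma_a/\dot{\sigma}_a$ --- together with the quadratic form estimate \eqref{eq:q3q4ineq}. The first inequality will bound $|\nabla Z|$ by the forcing term plus an $|Z|^2$ remainder whose coefficient carries an \emph{explicitly small} numerical factor, and the second will bound $|Z|^2$ by $|\nabla Z|^2$ and the forcing term with a factor of $\alpha$ on the left. An absorption argument paralleling Section 3 of \cite{EscauriazaSereginSverak} then closes the loop, after which the final weight conversion from $G_2$ to $\hat{G}_2$ is a direct consequence of the comparison \eqref{eq:gghatcomp}.

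In detail, I would first apply Lemma \ref{lem:pdeintineq2} with its parameter replaced by $2\alpha+1$, so that $\sigma_a^{1-(2\alpha+1)}/\dot{\sigma}_a$ is comparable to $\sigma_a^{-2\alpha}$ by \eqref{eq:sigmabounds}. Combined with \eqref{eq:q3q4ineq} (with the same shifted parameter, producing an $|Z|^2$-coefficient of order $\alpha/5000$), this yields a schematic inequality $A \leq CR + (\alpha/5000)B$, where
\begin{align*}
A &\dfn \iint \sigma_a^{-2\alpha}|\nabla Z|^2 G_2\,d\mu\,d\tau,\\
B &\dfn \iint \sigma_a^{-2\alpha-1}|Z|^2 G_2\,d\mu\,d\tau,\\
R &\dfn \iint \sigma_a^{-2\alpha}\big|(\partial_\tau + \Delta)Z\big|^2 G_2\,d\mu\,d\tau.
\end{align*}
Next, I would apply Lemma \ref{lem:pdeintineq3} directly with parameter $\alpha$. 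Since $\tilde{F}_2 \leq 0$ by \eqref{eq:tildeFest} and a direct computation gives $\dot{\sigma}_a/\sigma_a \geq 1/(3(\tau+a))$, the coefficient $\alpha\dot{\sigma}_a/\sigma_a - \tilde{F}_2 - N/r_c^2$ is bounded below by $c\alpha\sigma_a^{-1}$ for $\alpha$ large, yielding a second schematic inequality $\alpha B \leq C' A + C''R/\alpha$ with $C'$ a fixed absolute constant (essentially $2$). Substituting the first into the second gives a $(C'\alpha/5000)B$ contribution on the right that is absorbable into $\alpha B$ on the left since $C'/5000 < 1$. The outcome is $\alpha B \leq C_1 R$, which, reinserted into the first estimate, gives $A \leq C_2 R$. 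Taking square roots yields the proposition with $G_2$ in place of $\hat{G}_2$.

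The last step is the passage from $G_2$ to $\hat{G}_2$. By \eqref{eq:gghatcomp} we have $G_2 \asymp (\tau+a)^{-n/2}\hat{G}_2$, and \eqref{eq:sigmabounds} gives $\sigma_a \asymp \tau+a$, so $\sigma_a^{-2\alpha}G_2 \asymp \sigma_a^{-2\alpha-n/2}\hat{G}_2$. Applying the just-proved $G_2$-form of the inequality with $\alpha$ replaced by $\alpha - n/4$, and enlarging $\alpha_0$ correspondingly so that all the preceding large-$\alpha$ steps remain valid, converts it into \eqref{eq:pdecarleman2}. I would take $\rad_7 \geq \rad_6$ (from Lemma \ref{lem:quadlowerbounds}) and possibly enlarge it further to ensure that the lower-order curvature errors appearing in \eqref{eq:q3q4ineq}, \eqref{eq:tildeFest}, and the commutator $E(Z,\nabla Z)$ are all dominated.

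The main obstacle is already encoded in Lemma \ref{lem:quadlowerbounds}: the absorption step hinges on the fact that the $|Z|^2$-coefficient in \eqref{eq:q3q4ineq} is linear in $\alpha$ but with an \emph{explicitly small} constant factor ($1/10000$ there, becoming $1/5000$ after the parameter shift). Without such a quantitatively small constant, the $|Z|^2$ error produced when bounding $|\nabla Z|$ via Lemma \ref{lem:pdeintineq2} could not be dominated by the factor of $\alpha$ on the left of the second inequality, no matter how large $\alpha$ is taken, and no combined $L^2$-Carleman estimate of the desired form would be available.
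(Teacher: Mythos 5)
Your proposal matches the paper's proof in all essentials: it derives the estimate by combining Lemmas \ref{lem:pdeintineq2} and \ref{lem:pdeintineq3} with the quadratic-form bound \eqref{eq:q3q4ineq}, uses $\tilde{F}_2\le 0$ from \eqref{eq:tildeFest}, closes the loop via an absorption made possible by the explicitly small $\alpha/5000$ coefficient, and finishes by converting $G_2$ to $\hat{G}_2$ via \eqref{eq:gghatcomp} with the shift $\alpha\mapsto\alpha-n/4$. The only imprecision is that the constant you call $C'$ (``essentially $2$'') is actually on the order of $20$ once the $\sigma_a/\dot{\sigma}_a\lesssim 3e$ factor is accounted for, but $20/5000<1$ so the absorption still goes through unchanged.
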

\begin{remark}
An essentially identical inequality holds with $G_2 = G_{2; a, \rho}$ in place of $\hat{G}_{2}$ -- in fact, we will prove 
it first for $G_{2}$ and appeal to \eqref{eq:gghatcomp} to obtain \eqref{eq:pdecarleman2}. 
We find the inequality easier to apply with the weight $\hat{G}_{2}$, but easier to prove with $G_{2}$.
\end{remark}

\begin{proof} We begin by choosing $\rad_7$ to be greater than the constant $\rad_6$ from Lemma \ref{lem:quadlowerbounds}, 
and we will continue to increase it as necessary as the argument progresses.
To further reduce clutter, we will use the temporary shorthand 
$\mathcal{E} = \left(E_{\rad_7}\cap E_{\gamma\rho}\right)\times[0, \tau_0]$, and use $C$ and $N$ to denote sequences
of positive constants depending, respectively, only on $n$, and on $n$, $\gamma$, $\kappa$, $\nu$, and $K_0$.

First, combining \eqref{eq:pdeintident2} with \eqref{eq:q3q4ineq} yields the estimate
\begin{align}
\begin{split}
\label{eq:precarleman1}
 &\frac{1}{12e}\|\sigma_a^{-\alpha}\nabla Z G_2^{1/2}\|_{L^2(\mathcal{E})}^2 \leq 
  \left(N + \frac{\alpha}{5000}\right)\|\sigma_a^{-\alpha-1/2}ZG_2^{1/2}\|_{L^2(\mathcal{E})}^2\\
  &\qquad\qquad\qquad\phantom{\leq}  + \|\sigma_a^{-\alpha}(\partial_{\tau}Z+\Delta Z)G_2^{1/2}\|_{L^2(\mathcal{E})}^2,
\end{split}
\end{align}
valid for any $\alpha > 0$. (Here we have renamed $\alpha$ to write the factors of $\sigma_a^{1-\alpha}$
as $\sigma_a^{-2\alpha}$, and have used \eqref{eq:sigmabounds} to estimate the extra factors of $\dot{\sigma}_a$
in \eqref{eq:pdeintident2}.)  

Next, observe that, by \eqref{eq:tildeFest} and our choice of $\rad_7$,
we have $\tilde{F}_2\leq 0$ on $\operatorname{supp}{Z}$.  Using Lemma \ref{lem:pdeintineq3}, we can therefore choose
an
$\alpha_0 = \alpha_0(n, \gamma, \kappa, \nu,  K_0)\geq 1$ such that
\begin{align}
 \begin{split}\label{eq:precarleman2}
\frac{\alpha}{10}\|\sigma_a^{-\alpha-1/2}ZG^{1/2}_{2}\|_{L^2(\mathcal{E})}^2
&\leq 2\|\sigma_a^{-\alpha}\nabla Z G_{2}^{1/2}\|_{L^2(\mathcal{E})}^2\\
 &\phantom{\leq}
  + \frac{20}{\alpha}\|\sigma_a^{-\alpha}(\partial_{\tau}Z+\Delta Z)G_{2}^{1/2}\|_{L^2(\mathcal{E})}^2
 \end{split}
\end{align}
for all $\alpha \geq \alpha_0$.  Increasing $\alpha_0$, if necessary, to ensure
that $12e(N + \alpha/5000) \leq \alpha/40$ for all $\alpha \geq \alpha_0$, we may combine \eqref{eq:precarleman1} 
and \eqref{eq:precarleman2} to obtain that
\begin{equation}\label{eq:precarleman3}
\|\sigma_a^{-\alpha}\nabla Z G_{2}^{1/2}\|_{L^2(\mathcal{E})}^2\leq
 C\|\sigma_a^{-\alpha}(\partial_{\tau}Z+\Delta Z)G_{2}^{1/2}\|_{L^2(\mathcal{E})}^2
\end{equation}
for all $\alpha \geq \alpha_0$.  An appropriate further combination of \eqref{eq:precarleman2} and \eqref{eq:precarleman3} implies
\eqref{eq:pdecarleman2} with the substitute weight $G_{2}$. Then, using \eqref{eq:gghatcomp}, we can replace
$G_{2}$ with $\hat{G}_{2}$ at the expense of increasing the constant $C$ by a factor of $4$.
Finally, relabeling $\alpha$ once more to be $\alpha - n/4$, and using \eqref{eq:sigmabounds} to adjust the constant
by another universal factor, we obtain \eqref{eq:pdecarleman2}. 
\end{proof}

\subsection{A Carleman-type inequality for the ODE component}
Now we derive a matching $L^2$- estimate for the ODE portion of our system.  Since we will not perform any spatial integrations-by-parts
and the metrics $g(\tau)$ are uniformly equivalent, it will suffice 
to first prove the estimate relative to the fixed metric $g_c = g(0)$
and measure $d\mu_{g_c} = d\mu_{g(0)}$, and doing so will eliminate some extra terms in our computations. We will also work with
the function $\hat{G}_2 = \hat{G}_{2; a, \rho}$ from the outset.

Thus far the parameter $a$ has only been restricted to lie in $(0, 1)$; we will assume further now that $0<a\leq a_0$ for some $0 < a_0 \leq 1/8$.
We will also assume that $0 < \tau_0 \leq 1/4$ and that 
$Z\in C^{\infty}(\mathcal{Z}\times[0, \tau_0])$
is both compactly supported on $E_{\rad_0}\times[0, \tau_0)$ and vanishes identically on $E_{\rad_0}\times\{0\}$.
For convenience, we extend $Z$ to a piecewise smooth family of smooth sections of $\mathcal{Z}$ by declaring $Z(x, \tau) = 0$ 
for $\tau \ne [0, \tau_0]$. 
The basis for our estimate is the simple identity
\begin{align}
\begin{split}
\label{eq:odedivident2}
 &\sigma_a^{-2\alpha}\pdtau|Z|^2_{g_c}\hat{G}_2
   - \pdtau\left(\sigma_a^{-2\alpha}|Z|_{g_c}^2\hat{G}_2\right)\\
&\qquad\qquad= \sigma_a^{-2\alpha}\left(\frac{2\alpha}{(\tau+a)} - \frac{2\alpha}{3} - \frac{(r_c-\rho)^2}{4(\tau+a)^2}\right)|Z|^2_{g_c}\hat{G}_2,
\end{split}
\end{align}
valid for any $\alpha$ and $\rho$. From it we derive the inequalities
\begin{align}
\begin{split}
\label{eq:odedivineq2}
&\frac{3}{\alpha}\sigma_a^{-2\alpha}\left|\pd{Z}{\tau}\right|^2_{g_c}\hat{G}_2
   - \pdtau\left(\sigma_a^{-2\alpha}|Z|_{g_c}^2\hat{G}_2\right)\\
&\qquad\qquad\geq \sigma_a^{-2\alpha}\left(\frac{\alpha}{(\tau+a)} - \frac{(r_c-\rho)^2}{4(\tau+a)^2}\right)|Z|^2_{g_c}\hat{G}_2
\end{split}
\end{align}
and
\begin{align}
\begin{split}
\label{eq:odedivineq3}
&\frac{3}{2\alpha}\sigma_a^{-2\alpha}\left|\pd{Z}{\tau}\right|^2_{g_c}\hat{G}_2
   + \pdtau\left(\sigma_a^{-2\alpha}|Z|_{g_c}^2\hat{G}_2\right)\\
&\qquad\qquad\geq \sigma_a^{-2\alpha}\left(\frac{(r_c-\rho)^2}{4(\tau+a)^2} - \frac{2\alpha}{\tau+a}\right)|Z|^2_{g_c}\hat{G}_2
\end{split}
\end{align}
on $\Ec_{\rad_0}^{\tau_0}$ using Cauchy-Schwarz.
Consider the sets
\begin{align}
\begin{split} 
\label{eq:omegaprimedef}
  \Omega_{a, \alpha, \rho}^{\prime} &\dfn \left\{\,(x, \tau)\,|\, (r_c(x)-\rho)^2 \leq 2\alpha(\tau+a)\,\right\},\quad\mbox{and}\\
  \Omega_{a, \alpha, \rho}^{\prime\prime} &\dfn \left\{\,(x, \tau)\,|\, (r_c(x)-\rho)^2 \geq 10\alpha(\tau+a)\,\right\}.
\end{split}
\end{align}
For fixed $x\in E_{\rad_0}$, the intervals
\begin{align*}
  J^{\prime}(x) \dfn J_{a, \alpha, \rho}^{\prime}(x) &\dfn \{\,\tau \,|\,(x, \tau) \in \Omega^{\prime}_{a, \alpha, \rho}\,\}\,\quad\mbox{and}\\
  J^{\prime\prime}(x) \dfn J^{\prime\prime}_{a, \alpha, \rho}(x) &\dfn \{\,\tau\,|\,(x, \tau) \in \Omega^{\prime\prime}_{a, \alpha, \rho}\,\}
\end{align*}
are of the form $[b^{\prime}(x), \infty)$ and $(-\infty, b^{\prime\prime}(x)]$, respectively, for 
\[
  b^{\prime}(x) \dfn b^{\prime}_{a, \alpha, \rho}(x) \dfn \frac{(r_c(x) -\rho)^2}{2\alpha} - a, \quad\mbox{and}\quad  
b^{\prime\prime}(x) \dfn b^{\prime\prime}_{a, \alpha, \rho}(x) \dfn \frac{(r_c(x) -\rho)^2}{10\alpha} - a.
\]
Then, upon integration, we obtain from \eqref{eq:odedivineq2} that 
\begin{align*}
\begin{split}
 &\frac{3}{\alpha}\,\int_{J^{\prime}(x)}\sigma_a^{-2\alpha}\left|\pd{Z}{\tau}\right|_{g_c}^2\hat{G}_2\,d\tau
   + \left.\sigma_a^{-2\alpha}|Z|_{g_c}^2\hat{G}_2\right|_{(x, b^{\prime}(x))}\\ 
&\qquad\qquad\geq \int_{J^{\prime}(x)}\,\sigma_a^{-2\alpha}
\left(\frac{\alpha}{\tau+a} - \frac{(r_c-\rho)^2}{4(\tau+a)^2}\right)|Z|^2_{g_c}\hat{G}_2\,d\tau\\
&\qquad \qquad\geq \frac{\alpha}{6e}\int_{J^{\prime}(x)}\,\sigma_a^{-2\alpha-1}|Z|^2_{g_c}\hat{G}_2\,d\tau,
\end{split}
\end{align*}
and, similarly, from \eqref{eq:odedivineq3}, that
\begin{align*}
  &\frac{3}{2\alpha}\int_{(J^{\prime})^c(x)}\,\sigma_a^{-2\alpha}\left|\pd{Z}{\tau}\right|_{g_c}^2\hat{G}_2\,d\tau
   + \left.\sigma_a^{-2\alpha}|Z|_{g_c}^2\hat{G}_2\right|_{(x, b^{\prime}(x))}\\ 
&\geq \int_{(J^{\prime})^c(x)}\,\sigma_a^{-2\alpha}
\left(\frac{(r_c-\rho)^2}{4(\tau+a)^2}-\frac{2\alpha}{\tau+a}\right)|Z|^2_{g_c}\hat{G}_2\,d\tau\\
&\geq \frac{\alpha}{6e}\int_{J^{\prime\prime}(x)}\,\sigma_a^{-2\alpha-1}|Z|^2_{g_c}\hat{G}_2\,d\tau
  + \int_{b^{\prime\prime}(x)}^{b^{\prime}(x)}\,\sigma_a^{-2\alpha}
\left(\frac{(r_c-\rho)^2}{4(\tau+a)^2}-\frac{2\alpha}{\tau+a}\right)|Z|^2_{g_c}\hat{G}_2\,d\tau\\
&\geq \frac{\alpha}{6e}\int_{J^{\prime\prime}(x)}\,\sigma_a^{-2\alpha-1}|Z|^2_{g_c}\hat{G}_2\,d\tau
  - \frac{3\alpha}{2}\int_{b^{\prime\prime}(x)}^{b^{\prime}(x)}\,\sigma_a^{-2\alpha-1}|Z|^2_{g_c}\hat{G}_2\,d\tau.
\end{align*}
When combined, the above inequalities yield
\begin{align}
\begin{split}\label{eq:odedivineq4}
&\int_{0}^{\tau_0}\,\sigma_a^{-2\alpha -1}|Z|_{g_c}^2\hat{G}_2\,d\tau
  \leq \frac{C}{\alpha^2}\int_{0}^{\tau_0}\,\sigma_a^{-2\alpha }\left|\pd{Z}{\tau}\right|_{g_c}^2\hat{G}_2\,d\tau\\
&\quad\phantom{\leq} + C\int_{b^{\prime\prime}(x)}^{b^{\prime}(x)}\,\sigma_a^{-2\alpha -1}|Z|_{g_c}^2\hat{G}_2\,d\tau 
+ \frac{C}{\alpha}\left.\sigma_a^{-2\alpha}|Z|_{g_c}^2\hat{G}_2\right|_{(x, b^{\prime}(x))},
\end{split}
\end{align}
for some $C = C(n)$, for all $a\in (0, a_0)$, $\alpha > 0 $, $\rho > 0$, and $x\in E_{\rad_0}$.

We pause to estimate the last term in \eqref{eq:odedivineq4}.  Note that, if $b^{\prime}(x)\in [0, \tau_0]$, then 
$x\in A(r^{\prime}, r^{\prime\prime}) = \{\,r^{\prime} \leq |r_c(x)-\rho| \leq r^{\prime\prime}\,\}$
with 
\begin{equation}\label{eq:rprimedef}
   r^{\prime} \dfn r^{\prime}_{a, \alpha, \rho} \dfn \sqrt{2a\alpha},  \quad\mbox{and}\quad 
r^{\prime\prime} \dfn r^{\prime\prime}_{a, \alpha, \rho} \dfn \sqrt{2\alpha(\tau_0+ a)}.
\end{equation}
Then, using the definition of $b^{\prime}(x)$, we see that 
 \begin{align*}
  \left.\sigma_a^{-2\alpha}|Z|_{g_c}^2\hat{G}_2\right|_{(x, b^{\prime}(x))} 
 &=\frac{e^{\frac{2\alpha}{3}(b^{\prime}(x) + a)}}{(b^{\prime}(x) + a)^{2\alpha}}|Z|^2_{g_c}(x, b^{\prime}(x))e^{-\frac{\alpha}{2}}\\
 &\leq \left(\frac{1}{ae^{\frac18}}\right)^{2\alpha}|Z|^2_{g_c}(x, b^{\prime}(x)),
 \end{align*}
since, if $x\in \operatorname{supp}(Z)$, then $b^{\prime}(x) + a \leq \tau_0 + a_0 < 3/8$ by assumption. Also, 
\[
\operatorname{vol}_{g_c}(A(r_{a, \alpha, \rho}^{\prime}, r^{\prime\prime}_{a, \alpha, \rho})) \leq CA_0(\rho + \sqrt{\alpha})^{n}.
\]
for some $C = C(n)$, by \eqref{eq:rprimedef}.

Continuing on now from \eqref{eq:odedivineq4},  upon integration over $E_{\rad_0}$ and an application of Fubini's theorem, we obtain that
\begin{align}
\begin{split}\label{eq:odedivineq5}
&\iint_{\Ec_{\rad_0}^{\tau_0}}\sigma_a^{-2\alpha -1}|Z|_{g_c}^2\hat{G}_2\, d\mu_{g_{c}}\,d\tau
\leq \frac{C}{\alpha^2}\iint_{\Ec_{\rad_0}^{\tau_0}}\,\sigma_a^{-2\alpha }\left|\pd{Z}{\tau}\right|_{g_c}^2\hat{G}_2\,d\mu_{g_c}\,d\tau\\
&\qquad\qquad\phantom{\leq} + C\iint\limits_{\Omega_{a, \alpha, \rho} \cap \Ec_{\rad_0}^{\tau_0}}\,\sigma_a^{-2\alpha -1}|Z|_{g_c}^2\hat{G}_2\,d\mu_{g_c}\,d\tau\\
&\qquad\qquad\phantom{\leq}
+ \frac{C}{\alpha}A_0(\rho + \sqrt{\alpha})^{n}\left(\frac{1}{ae^{\frac18}}\right)^{2\alpha}\|Z\|^2_{\infty, g_c},
\end{split}
\end{align}
where
\begin{equation}\label{eq:omegadef}
  \Omega_{a, \alpha, \rho} \dfn \left(\Omega_{a, \alpha, \rho}^{\prime}\cup \Omega_{a, \alpha, \rho}^{\prime\prime}\right)^c
	= \{\,(x, \tau)\,|\, 2\alpha\leq (r_c(x)-\rho)^2(\tau+a)^{-1}\leq 10\alpha\,\}.
\end{equation}
By an argument similar to that in the preceding paragraph, on $\Omega_{a, \alpha, \rho}\cap\Ec_{\rad_0}^{\tau_0}$, the integrand
in the penultimate integral in \eqref{eq:odedivineq5} can be bounded above by $Ca^{-1}(ae^{1/8})^{-2\alpha}\|Z\|_{\infty, g_c}$, 
and the (space-time) measure of the set 
$\Omega_{a, \alpha, \rho}$ is again bounded above by $CA_0(\rho + \sqrt{\alpha})^{n}$. 
Thus, combining \eqref{eq:odedivineq5} with the equivalence of the norms $|\cdot|_{g_c}$ and $|\cdot| = |\cdot|_{g(\tau)}$ and
 the measures 
$d\mu_{g_c}$ and $d\mu = d\mu_{g(\tau)}$,
we arrive at our desired Carleman-type estimate.
\begin{proposition}\label{prop:odecarleman2}  Suppose $a_0 \in (0, 1/8)$, $\tau_0 \in (0, 1/4)$, and $\rad_0 \geq 1$.
Then there exist constants 
$N$ and $\rad_8 \geq \rad_0$, depending
only on $n$, $\kappa$, $\nu$, $A_0$, and  $K_0$, such that, for any smooth family $Z = Z(\tau)$ 
of sections of $\mathcal{Z}$ with compact support in $E_{\rad_8}\times[0, \tau_0)$
and which satisfies $Z(\cdot, 0) \equiv 0$, we have
\begin{align}
\begin{split}\label{eq:odecarleman2}
\|\sigma_{a}^{-\alpha-\frac12}Z\hat{G}_{2}^{\frac12}\|_{L^2(\Ec_{\rad_0}^{\tau_0})}
&\leq N\alpha^{-1}\|\sigma_{a}^{-\alpha}\partial_{\tau}Z\hat{G}_{2}^{\frac12}\|_{L^2(\Ec_{\rad_0}^{\tau_0})}\\
&\phantom{\leq}+ Na^{-\frac12}(\rho + \sqrt{\alpha})^{\frac{n}{2}}\left(\frac{1}{ae^{\frac18}}\right)^{\alpha}\|Z\|_{\infty, g_c}
\end{split}
\end{align}
for all $\alpha > 0$.
\end{proposition}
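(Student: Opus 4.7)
The plan is to derive the pointwise identity \eqref{eq:odedivident2}, apply Cauchy--Schwarz to produce the complementary differential inequalities \eqref{eq:odedivineq2} and \eqref{eq:odedivineq3}, integrate each on the appropriate region of the time axis at each fixed $x$, and finally pass from $g_c$-norms and measure to $g(\tau)$-norms and measure by uniform equivalence.

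First I would verify \eqref{eq:odedivident2} directly from $\partial_\tau\log\sigma_a = (\tau+a)^{-1} - 1/3$ and $\partial_\tau\log\hat{G}_2 = (r_c-\rho)^2/(4(\tau+a)^2)$. Because no spatial derivatives of $Z$ appear and I am working with the fixed reference metric $g_c$, no integration by parts is required. Applying Cauchy--Schwarz to $\partial_\tau|Z|_{g_c}^2 = 2\langle\partial_\tau Z, Z\rangle_{g_c}$ then yields the two complementary inequalities \eqref{eq:odedivineq2} and \eqref{eq:odedivineq3}, the first being productive where $(r_c - \rho)^2 \leq 2\alpha(\tau+a)$ and the second where $(r_c - \rho)^2 \geq 10\alpha(\tau+a)$.

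Next, for each fixed $x \in E_{\rad_0}$, I would integrate \eqref{eq:odedivineq2} over $J'(x) \cap [0, \tau_0]$ and \eqref{eq:odedivineq3} over $J''(x) \cap [0, \tau_0]$. The vanishing $Z(\cdot,0)\equiv 0$, together with compact support in $[0, \tau_0)$, kills the outer boundary contributions; the inner boundary term at $\tau = b'(x)$ is controlled by $\|Z\|_\infty^2$ times the weight $(ae^{1/8})^{-2\alpha}$ once the constraints $a \leq a_0 \leq 1/8$ and $\tau_0 \leq 1/4$ are used to guarantee $b'(x) + a < 3/8$ on $\operatorname{supp}Z$. The intermediate region $\Omega_{a,\alpha,\rho}$ between $J'$ and $J''$, on which neither inequality is sharp, is handled by estimating the integrand pointwise by $\|Z\|^2_\infty$; the definite gap between the thresholds $2\alpha$ and $10\alpha$ confines this set to an $r_c$-annulus about $\{r_c = \rho\}$ of width $O(\sqrt{a\alpha})$, and hence of $g_c$-measure at most $CA_0(\rho + \sqrt{\alpha})^n$.

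Integrating over $E_{\rad_0}$ via Fubini then produces \eqref{eq:odedivineq5}. To finish, I would convert from $|\cdot|_{g_c}$ and $d\mu_{g_c}$ to $|\cdot|_{g(\tau)}$ and $d\mu_{g(\tau)}$: since $g(0)\equiv g_c$ and $|\partial_\tau g| = 2|\Rc(g)| \leq CK_0 r_c^{-2}$ by \eqref{eq:curvdecay}, the two metrics and their volume forms are uniformly equivalent on $\Ec_{\rad_8}^{\tau_0}$ with $\rad_8$ sufficiently large, and all norms and measures may be replaced at the cost of absorbing a fixed constant. The main delicate point, where I expect to spend most of the care, is the intermediate annular region: neither differential inequality controls it, so the resulting error must be absorbed entirely into the $\|Z\|_\infty$ term on the right of \eqref{eq:odecarleman2}. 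The definite threshold gap between $2\alpha$ and $10\alpha$ in \eqref{eq:omegaprimedef}, combined with the explicit value $\hat{G}_2|_{(x,b'(x))} = e^{-\alpha/2}$, is precisely what keeps this contribution polynomial in $\rho$ and exponential only in the combination $(ae^{1/8})^{-\alpha}$, matching the second term on the right of \eqref{eq:odecarleman2}.
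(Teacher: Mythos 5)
Your proposal is correct and follows essentially the same route as the paper: verify the ODE divergence identity pointwise, derive the two complementary Cauchy--Schwarz inequalities valid on $\Omega'$ and $\Omega''$, integrate in $\tau$ at each fixed $x$ with boundary terms controlled by the explicit value $\hat G_2=e^{-\alpha/2}$ at the transition (using $b'(x)+a<3/8$ from $a_0\le 1/8$, $\tau_0\le 1/4$), control the intermediate annulus $\Omega_{a,\alpha,\rho}$ pointwise by $\|Z\|_\infty$ times $|\Omega_{a,\alpha,\rho}|\lesssim A_0(\rho+\sqrt\alpha)^n$, apply Fubini, and absorb the change from $g_c$ to $g(\tau)$ by uniform equivalence. (A minor slip: if you integrate the second inequality only over $J''(x)$, the boundary sits at $b''(x)$ rather than $b'(x)$, but since $\hat G_2|_{b''(x)}=e^{-5\alpha/2}$ is even smaller, either arrangement works.)
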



\section{Proof of backwards uniqueness}
\label{sec:backwardsuniqueness}

We now have the components we need to assemble our proof of Theorem \ref{thm:bu}. 
Below, we will continue to use one of the metrics, $g = g(\tau)$, from the statement
of that theorem as a reference metric in our estimates, and so will continue to assume that $g$ and its potential 
$f$ satisfy equations \eqref{eq:brf} -- \eqref{eq:fid1} of Proposition \ref{prop:reduction}.
By the arguments in Sections \ref{sec:reduction} and \ref{sec:pdeode},
it is enough to show that the sections $\Xb = S\oplus T$ and $\Yb = U\oplus V\oplus W$  defined in Section \ref{ssec:xydef}
 vanish identically on $\Ec_{\rad}^{\tau^{\prime}}$ for $\rad$ sufficiently large and $\tau^{\prime} \in (0, 1)$ sufficiently small. 

To begin, we observe that, from Proposition \ref{prop:pdeode}, there exists a constant $N = N(n, K_0)$ and, for any $\epsilon > 0$, another
constant $\rad_9 = \rad_9(\epsilon, n, K_0, \rad_0) \geq \rad_0$
such that 
\begin{align}\label{eq:xybound}
|\Xb| + |\nabla \Xb| + |\Yb|  \leq N
\end{align}
and
\begin{align}\label{eq:modpdeode}
\begin{split}
  \left|\pd{\Xb}{\tau} + \Delta \Xb\right| \leq \epsilon\left(|\Xb| +  |\Yb|\right), \quad \left|\pd{\Yb}{\tau}\right|\leq N\left(|\Xb| + |\nabla \Xb|\right) + \epsilon|\Yb| 
\end{split}
\end{align}
on $\Ec_{\rad_9}^{1}$.

Next, we describe the basic spatial cutoff function we will use in our argument.
\begin{lemma}\label{lem:cutoff}
 Given $\rho > 12\rad_0$ and $\xi > 4\rho$, there exists 
a smooth function $\psi_{\rho, \xi} \in C^{\infty}(E_{\rad_0}, [0, 1])$ satisfying
 $\psi_{\rho, \xi} \equiv 1$ on $E_{\frac{\rho}{3}}\setminus E_{2\xi}$ and $\psi_{\rho, \xi}\equiv 0$ on $(E_{\rad_0}\setminus E_{\rho/6})\cup E_{3\xi}$
 whose derivatives satisfy
 \begin{equation}\label{eq:cutoffderbounds}
    |\nabla \psi_{\rho, \xi}| + |\Delta \psi_{\rho, \xi}| \leq N\rho^{-1}
 \end{equation}
for some $N = N(n, K_0)$.  Here $|\cdot| = |\cdot|_{g(\tau)}$ and $\nabla = \nabla_{g(\tau)}$.
\end{lemma}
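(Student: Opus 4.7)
The plan is to take $\psi_{\rho,\xi}$ to be a composition with the conical radial coordinate $r_c$, so that it depends only on the position (not on $\tau$). Fix once and for all smooth cutoffs $\chi_1, \chi_2 \in C^\infty(\RR, [0,1])$ with $\chi_1 \equiv 0$ on $(-\infty, 1/6]$, $\chi_1 \equiv 1$ on $[1/3, \infty)$, $\chi_2 \equiv 1$ on $(-\infty, 2]$, and $\chi_2 \equiv 0$ on $[3, \infty)$, and define
\[
\psi_{\rho,\xi}(x) \dfn \chi_1\!\left(\frac{r_c(x)}{\rho}\right)\chi_2\!\left(\frac{r_c(x)}{\xi}\right).
\]
The required support conditions follow directly from those of $\chi_1$ and $\chi_2$; the hypothesis $\xi > 4\rho > 12 \rad_0$ ensures the inner and outer cutoffs do not interact.

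The first real step is to establish uniform pointwise bounds
\[
|\nabla r_c|_{g(\tau)} \leq C, \qquad |\Delta_{g(\tau)} r_c| \leq \frac{C}{r_c}
\]
on $E_{\rad_0}\times[0, \tau_0]$ for some $C = C(n, K_0)$. With respect to the conical metric these are the exact identities $|\nabla r_c|_{g_c} = 1$ and $\Delta_{g_c} r_c = (n-1)/r_c$. On the other hand, integrating $\partial_\tau g = 2\Rc(g)$ from $\tau = 0$ and invoking \eqref{eq:curvdecay} gives $g(\tau) - g_c = O(r_c^{-2})$ in $C^2$-norm on $E_{\rad_0}$, uniformly in $\tau \in [0, \tau_0]$. (Alternatively, one may compare $r_c$ with the approximately-radial function $h$ from Section \ref{ssec:hdef} using Lemma \ref{lem:hder} and Lemma \ref{lem:rhcomparison}.) The uniform equivalence and smallness of the perturbation then transfer the conical bounds to $g(\tau)$ after adjusting $C$.

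Granted these estimates, the chain rule yields
\[
\nabla \psi_{\rho,\xi} = \left(\frac{\chi_1'}{\rho}\chi_2 + \frac{\chi_1\chi_2'}{\xi}\right)\nabla r_c,
\]
and since $\xi \geq \rho$ we obtain $|\nabla \psi_{\rho,\xi}|_{g(\tau)} \leq C(\|\chi_1'\|_\infty + \|\chi_2'\|_\infty)/\rho \leq N/\rho$. Expanding the Laplacian,
\[
\Delta \psi_{\rho,\xi} = \left(\frac{\chi_1''}{\rho^2}\chi_2 + \frac{2\chi_1'\chi_2'}{\rho\xi} + \frac{\chi_1\chi_2''}{\xi^2}\right)|\nabla r_c|^2 + \left(\frac{\chi_1'}{\rho}\chi_2 + \frac{\chi_1 \chi_2'}{\xi}\right)\Delta r_c,
\]
each quadratic term is bounded by $C/\rho^2 \leq C/\rho$ since $\rho > 12$, and each first-derivative term is bounded by $C/(r_c \rho)$; on the support of $\nabla \psi_{\rho,\xi}$ we have $r_c \geq \rho/6$, so this contribution is also $\leq 6C/\rho^2 \leq C/\rho$. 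Adding up yields the claimed bound $|\nabla \psi_{\rho,\xi}| + |\Delta \psi_{\rho,\xi}| \leq N \rho^{-1}$, uniformly in $\tau \in [0, \tau_0]$.

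The only substantive step is the verification of the bounds on $\nabla r_c$ and $\Delta r_c$ in the evolving metric; everything else is a routine chain-rule computation. This step is straightforward given the standing quadratic curvature decay, since the evolution only perturbs $g_c$ by $O(r_c^{-2})$ over the time interval $[0, \tau_0]$.
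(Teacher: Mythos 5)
Your proposal is correct and follows essentially the same route as the paper: define $\psi_{\rho,\xi}$ by composing fixed one-variable cutoffs with $r_c/\rho$ and $r_c/\xi$, and reduce the gradient and Laplacian bounds to the conical identities $|\nabla r_c|_{g_c}=1$, $\Delta_{g_c}r_c=(n-1)/r_c$, transferred to $g(\tau)$ via the $C^2$-closeness of $g(\tau)$ to $g_c$ (equivalently, the Christoffel-symbol estimate coming from $|\nabla\Rc|\lesssim r_c^{-3}$, which is what the paper invokes). The only cosmetic difference is that the paper writes the cutoff as a difference $\eta_1(r_c/\rho)-\eta_2(r_c/\xi)$ rather than your product, which is immaterial.
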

\begin{proof} 
It is a routine matter to construct such a function in the form $\psi_{\rho, \xi}(x) = \eta_1(r_c(x)/\rho) - \eta_2(r_c(x)/\xi)$ 
for some $\eta_i \in C^{\infty}_c(\RR, [0, 1])$.
The only potentially nonstandard detail to verify is the \emph{two-sided} bound on the Laplacian, which may be derived
from the identity $\nabla\nabla_{g_c} r_c = r_c g_{\Sigma}$, the uniform equivalence of the metrics $g$ and $g_c$,
 and the pointwise estimate on $|\Gamma - \Gamma_{g_c}|$ one obtains from the bounds on $\nabla\Rc$.
\end{proof}

From this point onwards, the proof consists of two general steps.
First, we apply the PDE and ODE Carleman inequalities of Section \ref{sec:carleman2} to (suitably cut-off versions of) $\Xb$ and $\Yb$,
and use them to verify that $\Xb$ and $\Yb$ have quadratic exponential decay in space if they vanish at $\tau = 0$.  This ensures the validity
of our second step, in which we apply the Carleman estimates in Section \ref{sec:carleman1} to deduce that 
$\Xb$ and $\Yb$ vanish identically.

\subsection{Exponential Decay} We now proceed with the first of these steps, verifying the following ancillary claim.
\begin{claim}\label{cl:expdecay} There exist constants $s_0 = s_0(n)$, $C = C(n)$, $N = N(n, K_0)$, and $\rad_{10} = \rad_{10}(n, K_0)$
with $s_0 \in (0, 1]$ and $\rad_{10}\geq \rad_0$,
such that, for all $\rad \geq \rad_{10}$,
\begin{equation}\label{eq:xydecay}
  \||\Xb| + |\nabla \Xb| + |\Yb|\|_{L^2(A((1-\sqrt{s})\rho, (1+\sqrt{s})\rho)\times[0, s])}
\leq N e^{-\frac{\rho^2}{Cs}},
\end{equation}
for any $s\in (0, s_0]$ and $\rho > 12\rad$.  Here $A(r_1, r_2)$ denotes the annular region $E_{r_1}\setminus E_{r_2}$. 
\end{claim}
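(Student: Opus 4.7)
The strategy is to apply the two rapid-decay Carleman inequalities (Propositions \ref{prop:pdecarleman2} and \ref{prop:odecarleman2}) to suitably cut-off versions of $\Xb$ and $\Yb$, with weight $\hat{G}_2=\hat{G}_{2;a,\rho}$ sharply localized near $r_c=\rho$, and then to choose the parameters $a$, $\tau_0$, $\alpha$ so that the cutoff errors are crushed by the Gaussian factor while the target annulus sits inside the effective support of $\hat{G}_2$.

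I will invoke Lemma \ref{lem:cutoff} with inner parameter $\rho_1=2\rho$ and outer $\xi\to\infty$: the resulting $\psi$ equals $1$ on $[2\rho/3,2\xi]$ (hence on the target annulus, provided $\sqrt{s}<1/3$) and vanishes outside $[\rho/3,3\xi]$, so $\operatorname{supp}\psi\subset E_{\rho/3}\subset E_{\rad_7}\cap E_{\gamma\rho}$ with $\gamma=1/4$ once $\rho$ is sufficiently large, meeting the support hypothesis of Proposition \ref{prop:pdecarleman2}. The product rule yields
\[
(\partial_\tau+\Delta)(\psi\Xb)=\psi(\partial_\tau+\Delta)\Xb+2\nabla\psi\cdot\nabla\Xb+(\Delta\psi)\Xb,\qquad \partial_\tau(\psi\Yb)=\psi\,\partial_\tau\Yb,
\]
the absence of a spatial commutator in the ODE relation being crucial. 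Combining with \eqref{eq:modpdeode} and \eqref{eq:cutoffderbounds}, the two right-hand sides are controlled respectively by $\epsilon\psi(|\Xb|+|\Yb|)+N\rho^{-1}\chi_{\operatorname{supp}(\nabla\psi)}(|\Xb|+|\nabla\Xb|)$ and by $N\psi(|\Xb|+|\nabla\Xb|)+\epsilon\psi|\Yb|$.

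Summing the two Carleman inequalities, the cross-coupling terms ($|\Yb|$ on the PDE side and $|\Xb|+|\nabla\Xb|$ on the ODE side) can be absorbed into the left-hand sides by taking $\alpha$ large enough to dominate the $N$ factors via the $\alpha^{-1}$ savings in Proposition \ref{prop:odecarleman2}, and by taking $\rad$ large enough to make $\epsilon$ correspondingly small. What remains on the right-hand side are the cutoff-commutator contributions, supported on $\operatorname{supp}(\nabla\psi)\subset\{\rho/6\le r_c\le \rho/3\}\cup\{2\xi\le r_c\le 3\xi\}$, together with the ODE boundary term $Na^{-1/2}(\rho+\sqrt\alpha)^{n/2}(ae^{1/8})^{-\alpha}\|\psi\Yb\|_\infty$. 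The outer-cutoff contribution vanishes in the limit $\xi\to\infty$ via $\hat{G}_2\le e^{-\xi^2/(C(\tau+a))}$, while on the inner-cutoff support $|r_c-\rho|\ge 2\rho/3$, so $\hat{G}_2\le e^{-\rho^2/(9(\tau+a))}$.

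To extract \eqref{eq:xydecay} I will take $\tau_0=a=s$ and $\alpha=\rho^2/s$, fixing $s_0$ small (depending only on $n$; for instance $s_0=1/9$) so that $s\le 1/8$, $\sqrt{s}<1/3$, and $\alpha\ge\alpha_0$ hold, with $\rad_{10}$ chosen large in terms of $n$ and $K_0$ to validate the absorption step and the support hypotheses. On the target annulus, where $\psi\equiv 1$, $\sigma_a\asymp s$, and $\hat{G}_2\ge e^{-\rho^2/4}$, the left-hand side is bounded below by $c\sqrt\alpha\,s^{-\alpha-1/2}e^{-\rho^2/8}\,\||\Xb|+|\nabla\Xb|+|\Yb|\|_{L^2(A\times[0,s])}$; the inner-cutoff contribution to the right-hand side is at most $Ns^{-\alpha}\rho^{n/2}e^{-\rho^2/(72s)}$, and the ODE boundary term at most $Ns^{-\alpha-1/2}\rho^{n/2}e^{-\rho^2/(8s)}$. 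Dividing through, the prefactor $e^{\rho^2/8}$ from the left-hand-side weight is overwhelmed by the Gaussian decay on the right once $s$ is small, yielding the bound $Ne^{-\rho^2/(Cs)}$. The main technical hurdle I anticipate is coordinating the absorption of the coupling: Proposition \ref{prop:pdecarleman2} supplies a factor $\sqrt\alpha$, whereas Proposition \ref{prop:odecarleman2} supplies only $1/\alpha$, and the two estimates use slightly different $\sigma_a$-weightings ($\sigma_a^{-\alpha-1/2}$ on the left, $\sigma_a^{-\alpha}$ on the right), which must be reconciled using the bound $\sigma_a\le 2/e$, all while keeping $\epsilon=\epsilon(\rad)$ small enough to permit the absorption uniformly in the (large) $\alpha$ we ultimately choose.
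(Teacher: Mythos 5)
Your overall blueprint (cut off in space, apply Propositions \ref{prop:pdecarleman2} and \ref{prop:odecarleman2}, absorb the coupling, send the outer cutoff to infinity, then divide by the weight on the target annulus) is the right starting point, but the specific parameter choice $a=\tau_0=s$, $\alpha=\rho^2/s$ with a \emph{single} application of each Carleman inequality cannot be made to close, and the step where you divide through is quantitatively wrong.

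The issue is the lower bound you claim for the weight on the annulus. With $a=s$ and $\tau\in[0,s]$, the quantity $\tau+a$ ranges over $[s,2s]$, so $\sigma_a$ ranges over approximately $[s,2s]$, and the infimum of $\sigma_a^{-\alpha-1/2}$ over the annulus-times-time box is of size $(2s)^{-\alpha-1/2}$, \emph{not} $s^{-\alpha-1/2}$. When you divide the inner-cutoff and ODE boundary terms by this, you pick up a factor of order $2^{\alpha}=e^{(\log 2)\rho^2/s}$. But your inner-cutoff contribution decays only like $e^{-\rho^2/(36s)}$ (or $e^{-\rho^2/(72s)}$ in your accounting), and the ODE boundary term $(ae^{1/8})^{-\alpha}\sim s^{-\alpha}e^{-\rho^2/(8s)}$ decays only like $e^{-\rho^2/(8s)}$ relative to $s^{-\alpha}$; since $\log 2 > 1/8 > 1/36$, the ratio you compute actually blows up like $e^{c\rho^2/s}$ rather than decaying. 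Tinkering with $\alpha=c'\rho^2/s$ for smaller $c'$ does not rescue this: the $(ae^{1/8})^{-\alpha}$ factor in Proposition \ref{prop:odecarleman2} always beats $\sigma_a^{-\alpha}$ over the full interval $[0,s]$ because $\log 2 - 1/8 > 0$. The cancellation you gesture at (``$e^{\rho^2/8}$ overwhelmed by the Gaussian on the right'') silently assumes the $\sigma_a^{-\alpha}$ factors cancel pointwise across the two sides, which they do not once you take a genuine infimum/supremum over the time interval.

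The paper overcomes precisely this obstruction by a different mechanism: rather than a single $\alpha$, it applies the Carleman pair for \emph{every} $k=k_1+l$, multiplies the $l$-th inequality by $\rho^{2l}/((2C)^l l!)$, and \emph{sums} the series. This synthesizes an $e^{\rho^2/(C(\tau+a))}$ weight on the left that exactly competes with $\hat{G}_2^{1/2}$, and the ODE boundary term becomes $a^{-(k_1+1/2)}e^{\rho^2/(C a e^{1/16})}$. The final gain is extracted by writing $e^{1/16}=1+2\delta$ and restricting the left-hand side to $\tau\in[0,\delta a]$, so that the left weight $e^{\rho^2/(Ca(1+\delta))}$ strictly dominates the right. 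None of this is visible from a single choice of $\alpha$, and the $e^{1/8}$-versus-$e^{1/16}$ margin --- which you do not mention --- is the crux. Two minor additional gaps: your proof uses no temporal cutoff, yet Propositions \ref{prop:pdecarleman2} and \ref{prop:odecarleman2} require compact support in $E\times[0,\tau_0)$, so the sections must be cut off in time \emph{before} $\tau_0$; and the paper's choice is to fix $\tau_0=1/4$ and a fixed time cutoff $\varphi$ supported in $[0,1/5]$, varying only $a\to 0^+$, rather than setting $\tau_0=a=s$.
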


\begin{proof}
We follow the broad outline of the proof of Lemma 4 of \cite{EscauriazaSereginSverak}, making adjustments where necessary
to handle the additional error term contributed by the inequality \eqref{eq:odecarleman2},
the lack of consistent scaling among the components of $\Xb$ and $\Yb$, and the somewhat different
form of our Carleman estimates.
 
Let $\rad_7$ and $\rad_8$ be the constants guaranteed by Propositions \ref{prop:pdecarleman2} and \ref{prop:odecarleman2}
with the choice $\gamma = 1/12$.  We take $s_0 = 1/4$ and $\rad_{10} = \max\{\rad_7, \rad_8\}$ initially, and adjust them as the argument progresses, always assuming
$\rad \geq \rad_{10}$. 
 We then let $\rho$ be a positive parameter satisfying $\rho \geq 12\rad$ and choose a further large number  $\xi \geq 4\rho$. 
 Below, $C$  will denote a series of constants depending only on the parameter $n$ and $N$ 
a series depending only on $n$, $A_0$ and $K_0$.

Take $\psi_{\rho, \xi}$ to be the cutoff function guaranteed by Lemma \ref{lem:cutoff}, and choose a temporal cutoff function 
$\varphi\in C^{\infty}(\RR, [0, 1])$ with 
 $\varphi \equiv 1$ for $\tau\leq 1/6$ and $\varphi \equiv 0$ for $\tau\geq 1/5$.
Then $\Xb_{\rho, \xi}\dfn \varphi\psi_{\rho, \xi}\Xb$ and $\Yb_{\rho, \xi}\dfn \varphi\psi_{\rho, \xi}\Yb$ are compactly supported in
$A(\rho/6, 3\xi)\times [0, 1/4)$.  Applying Propositions \ref{prop:pdecarleman2} and \ref{prop:odecarleman2}, respectively, to
the components of $\Xb_{\rho, \xi}$ and $\Yb_{\rho, \xi}$, summing the result, and using \eqref{eq:xybound}, we obtain constants $k_0$ and $N$
such that
\begin{align*}
\begin{split}
 &k^{\frac12}\|\sigma_a^{-k-\frac{1}{2}}\Xb_{\rho, \xi}\hat{G}_2^{\frac{1}{2}}|\|_{L^2(A(\frac{\rho}{6},3\xi)\times [0, \frac15])}
  +\|\sigma_a^{-k}\nabla \Xb_{\rho, \xi}\hat{G}^{\frac12}_2\|_{L^2(A(\frac{\rho}{6},3\xi)\times [0, \frac15])}\\
  &\qquad\phantom{\leq}+\|\sigma_{a}^{-k-\frac{1}{2}}\Yb_{\rho, \xi}\hat{G}_2^{\frac12}\|_{L^2(A(\frac{\rho}{6},3\xi)\times [0, \frac15])}\\
  &\leq N\|\sigma_a^{-k}(\partial_{\tau}+\Delta) \Xb_{\rho, \xi}\hat{G}_2^{\frac12}\|_{L^2(A(\frac{\rho}{6},3\xi)\times [0, \frac15])} \\
  &\qquad\phantom{\leq}
      + Nk^{-1}\|\sigma_{a}^{-k}\partial_{\tau}\Yb_{\rho, \xi}\hat{G}_2^{\frac12}\|_{L^2(A(\frac{\rho}{6},3\xi)\times [0, \frac15])}\\
  &\qquad\phantom{\leq}
+ N(\rho + k^{\frac12})^{\frac{n}{2}}a^{-\frac12}(ae^{\frac18})^{-k}
\end{split}
\end{align*}
for any $a\in (0, 1/8)$ and any $k \geq k_0$. Here $\hat{G}_2= \hat{G}_{2; a, \rho}$. 

Now, by \eqref{eq:modpdeode}, for all $\epsilon > 0$, there is $\rad_9 = \rad_9(\epsilon)$ such that the pair of inequalities
\begin{align*}
  \left|\pd{\Xb_{\rho, \xi}}{\tau} + \Delta \Xb_{\rho, \xi}\right|&\leq \epsilon\left(|\Xb_{\rho, \xi}| + |\Yb_{\rho, \xi}|\right)
	      +\psi_{\rho, \xi}|\varphi^{\prime}||\Xb|\\
  &\phantom{\leq}
\qquad+ \varphi\left(|\Delta\psi_{\rho, \xi}||\Xb| + 2|\nabla \psi_{\rho, \xi}||\nabla \Xb|\right),\\
 \left|\pd{\Yb_{\rho, \xi}}{\tau}\right| &\leq 
  N\left(|\Xb_{\rho, \xi}| + |\nabla \Xb_{\rho, \xi}|\right) + \epsilon|\Yb_{\rho, \xi}| + N\varphi|\nabla\psi_{\rho, \xi}||\Xb|
      +\psi_{\rho, \xi}|\varphi^{\prime}||\Yb|,
\end{align*}
hold on $\Ec_{\rad_9}^{1}$.  Thus if $\epsilon$ is taken sufficiently small,
and $k_1 \geq k_0$ sufficiently large, we may increase $\rad_{10}$ to ensure  $\rad_{10}\geq \rad_9$, assume $k\geq k_1$,  
and return to the preceding inequality
to absorb the terms proportional to $|\Xb_{\rho, \xi}|$, $|\nabla \Xb_{\rho, \xi}|$,
and  $|\Yb_{\rho, \xi}|$ on the right into the left-hand side. (Here we also use that $\sigma_a \leq 1$.) We obtain the inequality
\begin{align}\label{eq:decayest2}
\begin{split}
 &\|\sigma_a^{-k-\frac{1}{2}}(|\Xb_{\rho, \xi}|+|\Yb_{\rho, \xi}|)\hat{G}_2^{\frac{1}{2}}|\|_{L^2(A(\frac{\rho}{6},3\xi)\times [0, \frac15])}\\
 &\qquad\phantom{\leq}
  +\|\sigma_a^{-k}\nabla \Xb_{\rho, \xi}\hat{G}_2^{\frac12}\|_{L^2(A(\frac{\rho}{6},3\xi)\times [0, \frac15])}\\
  &\qquad\leq N \|\sigma_a^{-k-\frac{1}{2}}(|\Xb|+|\Yb|)\hat{G}_2^{\frac{1}{2}}|\|_{L^2(A(\frac{\rho}{6},3\xi)\times [\frac16, \frac15])}\\
  &\qquad\phantom{\leq} + N\|\sigma_a^{-k}(|\Xb| +|\nabla \Xb| + |\Yb|)\hat{G}_2^{\frac12}\|_{L^2(A(\frac{\rho}{6},\frac{\rho}{3})\times [0, \frac15])}\\
  &\qquad \phantom{\leq} + N\|\sigma_a^{-k}(|\Xb| +|\nabla \Xb| + |\Yb|)\hat{G}_2^{\frac12}\|_{L^2(A(2\xi, 3\xi)\times [0, \frac15])} \\
  &\qquad\phantom{\leq}+ N(\rho + k^{\frac12})^{\frac{n}{2}}a^{-\frac12}(ae^{\frac18})^{-k},
\end{split}
\end{align}
valid for all $k\geq k_1$ and all $0 < a < 1/8$.

Consider the penultimate term in \eqref{eq:decayest2}.  Since $\xi\geq 4\rho$, we have 
$r_c -\rho \geq 7\xi/4$ on $A(2\xi, 3\xi)\times [0, 1/5]$ and thus, for some universal $\beta$, we have
\begin{align*}
& N\|\sigma_a^{-k}(|\Xb| + |\nabla \Xb|+|\Yb|)\hat{G}_2^{\frac12}\|_{L^2(A(2\xi, 3\xi)\times [0, \frac15])}
\leq N\left(e/a\right)^{k}e^{-\beta\xi^2}\xi^{\frac{n}{2}}
\end{align*}
where we have used the uniform equivalence of the metrics $g(\tau)$ and $g_c$ to estimate the volume. It follows that 
this term tends to $0$ as $\xi\to\infty$. In fact, from \eqref{eq:xybound} and the quadratic exponential decay of $\hat{G}_2$ we see also that
the integrals in the other terms in \eqref{eq:decayest2} will be finite as $\xi\to\infty$.  
Therefore, upon sending $\xi\to\infty$ in \eqref{eq:decayest2}, using the monotone convergence theorem,
and shrinking the domain of integration on the left side,
we obtain
\begin{align}\label{eq:decayest3}
\begin{split}
 &\|(\tau+a)^{-k}(|\Xb|+|\nabla \Xb| + |\Yb|)\hat{G}_2^{\frac12}\|_{L^2(E_{\frac{\rho}{3}}\times [0, \frac{1}{6}])}\\
  &\leq C^k N\|(\tau+a)^{-k}(|\Xb|+|\Yb|)\hat{G}_2^{\frac12}\|_{L^2(E_{\frac{\rho}{6}}\times [\frac16, \frac15])}\\
&\qquad+ C^kN\|(\tau+a)^{-k}(|\Xb| + |\nabla \Xb|+|\Yb|)\hat{G}_2^{\frac12}\|_{L^2(A(\frac{\rho}{6}, \frac{\rho}{3})\times [0, \frac15])}\\
  &\qquad+ N(\rho + k^{\frac12})^{\frac{n}{2}}a^{-\frac12}(ae^{\frac18})^{-k}
\end{split}
\end{align}
for some universal constant $C$. The inequality is valid 
for all $k\geq k_1$ and $a\in (0, 1/8)$.

Now, by \eqref{eq:xybound}, the first
term on the right side of \eqref{eq:decayest3} can be estimated as
\begin{align*}
 \|(\tau+a)^{-k}(|\Xb|+|\Yb|)\hat{G}_2^{\frac12}\|_{L^2(E_{\frac{\rho}{6}}\times [\frac16, \frac15])}
    &\leq 6^kN\|e^{-\frac{(r_c-\rho)^2}{3}}\|_{L^2(E_{\frac{\rho}{6}}\times[\frac16, \frac15])}\leq C^kN\rho^{\frac{n}{2}}
\end{align*}
for all $k\geq k_1$ and $a\in (0, 1/8)$.
On the domain of the second term on the right side of \eqref{eq:decayest3},
we have $e^{-(r_c-\rho)^2/(8(\tau+a))} \leq e^{-\rho^2/(18(\tau+a))}$, and, by Stirling's formula,
\[
 \max_{s>0} s^{-k}e^{-\rho^2/(18s)}  = \rho^{-2k}(18k)^ke^{-k} \leq \rho^{-2k}C^k k!,
\]
so, invoking \eqref{eq:xybound}, we can estimate this term as
\[
  \|(\tau+a)^{-k}(|\Xb| + |\nabla \Xb|+ |\Yb|)\hat{G}_2^{\frac12}\|_{L^2(A(\frac{\rho}{6}, \frac{\rho}{3})\times [0, \frac15])} 
    \leq N\rho^{-2k}C^kk!\rho^{\frac{n}{2}}
\]
for a universal constant $C$ and all $a\in (0, 1/8)$ and $k \geq k_1$.

Returning to \eqref{eq:decayest3} with these two estimates in hand, we obtain
\begin{align}\label{eq:decayest4}
\begin{split}
 &\|(\tau+a)^{-(k_1+l)}(|\Xb|+|\nabla \Xb| + |\Yb|)\hat{G}_2^{\frac12}\|_{L^2(E_{\frac{\rho}{3}}\times [0, \frac{1}{6}])}\\ 
  &\leq C^{k_1+l}N\rho^{\frac{n}{2}}(1+ \rho^{-2(k_1+l)}(k_1+l)!)
    + N(\rho + (k_1+l)^{\frac12})^{\frac{n}{2}}a^{-\frac12}(ae^{\frac18})^{-(k_1+l)}
\end{split}
\end{align}
for all $l\geq 0$ and all $a\in (0, 1/8)$. Noting that $(l+k_1)! \leq C^{k_1+l}(l!)(k_1!)$ 
and $l^{n/4}e^{-l/16} \leq C$ for some universal $C$,
we can
multiply both sides of \eqref{eq:decayest4} by $\rho^{2l}/((2C)^ll!)$ and sum over all $l\geq 0$
to obtain
\begin{align}\label{eq:decayest5}
 \begin{split}
  &\|(|\Xb|+|\nabla \Xb| + |\Yb|)e^{\frac{\rho^2}{C(\tau+a)}
      - \frac{(r_c-\rho)^2}{8(\tau+a)}}\|_{L^2(E_{\frac{\rho}{3}}\times [0, \frac16])}\\
&\qquad\leq N\rho^{\frac n 2}\left(1+ e^{\frac{\rho^2}{2}}  + a^{-(k_1+\frac12)}e^{\frac{\rho^2}{Cae^{1/16}}}\right).
 \end{split}
\end{align}
for some possibly increased universal $C$.

The $e^{1/16}$ factor in the denominator of the exponent in the last term on the right is crucial
here, as it enables us to achieve a slightly smaller relative value in the denominator of
the exponent of the corresponding factor on the left by suitably restricting $\tau$. Specifically, if we write $e^{1/16} = 1 +2\delta$, then
\begin{align*}
 \begin{split}
  &\|(|\Xb|+|\nabla \Xb| + |\Yb|)e^{-\frac{(r_c-\rho)^2}{8(\tau+a)}}\|_{L^2(E_{\frac{\rho}{3}}\times [0, \delta a])}\\
    &\qquad \leq N\rho^{\frac n 2}\left(\left(1+ e^{\frac{\rho^2}{2}}\right)e^{-\frac{\rho^2}{Ca(1+\delta)}}
    + a^{-(k_1+\frac12)}e^{\frac{-\delta\rho^2}{Ca(1+2\delta)(1+\delta)}}\right).
 \end{split}
\end{align*}
Since $a^{-(k_1+1/2)}e^{-\delta\rho^2/(2Ca(1+2\delta)(1+\delta))}$ is bounded above by a constant
depending only on $k_1$, provided we increase $N$ by an appropriate factor 
(recall that $k_1$ depends on the same parameters as $N$), it follows that there is $C = C(n)$ sufficiently
large such that
\[
 \|(|\Xb|+|\nabla \Xb| + |\Yb|)e^{-\frac{(r_c-\rho)^2}{8(\tau+a)}}\|_{L^2(E_{\frac{\rho}{3}}\times [0, \delta a])}
    \leq Ne^{\frac{-\rho^2}{Ca}}.
\]
for all $0 < a\leq 1/C$. On the other hand, $e^{-|r_c -\rho|^2/(8(\tau+a))} \geq N^{-1}$
on the set $A((1-\sqrt{\delta a})\rho, (1+\sqrt{\delta a})\rho)\times [0, \delta a]$, so
we obtain that
\begin{align}
\begin{split}
    \||\Xb|+|\nabla \Xb| +|\Yb|\|_{L^2(A((1-\sqrt{s})\rho,(1+\sqrt{s})\rho)\times[0, s])}
\leq Ne^{-\frac{\rho^2}{Cs}}
\end{split}  
\end{align}
for $s\in [0, 1/C]$ and arbitrary $\rho \geq 12\rad_6$. 
\end{proof}

\subsection{The vanishing of $\Xb$ and $\Yb$}

The proof of Theorem \ref{thm:bu} is now reduced to that of the following claim.
\begin{claim} There exist $\tau^{\prime}\in (0, 1)$ and $\rad_{11} = \rad_{11}(n, K_0)$ such that
$\Xb \equiv 0$ and $\Yb\equiv 0$ on $\Ec_{\rad_{11}}^{\tau^{\prime}}$. 
\end{claim}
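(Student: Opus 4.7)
The strategy is to apply the Carleman estimates of Proposition \ref{prop:systemcarlemanineq} to space-time cutoffs of $\Xb$ and $\Yb$, appealing to Claim \ref{cl:expdecay} to send the outer spatial cutoff to infinity. Fix $\rho \geq 12\rad_{10}$, $\xi > 4\rho$, and let $\psi = \psi_{\rho,\xi}$ be the spatial cutoff of Lemma \ref{lem:cutoff}. Choose $\tau_0 \in (0, 1/4)$ to be determined and select $\varphi \in C^{\infty}([0, \tau_0], [0,1])$ with $\varphi \equiv 1$ on $[0, \tau_0/2]$ and $\varphi \equiv 0$ on $[3\tau_0/4, \tau_0]$. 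Since $\Xb(\cdot, 0) = \Yb(\cdot, 0) = 0$ and $\varphi(\tau_0)=0$, the tensors $\varphi\psi\Xb$ and $\varphi\psi\Yb$ are admissible in Proposition \ref{prop:systemcarlemanineq} with vanishing $\tau_0$-boundary term in \eqref{eq:systemcarlemanineq1}. The plan is to apply \eqref{eq:systemcarlemanineq1} to $\varphi\psi\Xb$ multiplied by a large constant $M = M(n, K_0)$ and add the result to \eqref{eq:systemcarlemanineq2} applied to $\varphi\psi\Yb$.

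Expanding the derivatives in the right-hand side and invoking the PDE-ODE inequalities \eqref{eq:pdeode}, I would split the resulting estimate into interior error terms (proportional to $|\Xb|$, $|\nabla\Xb|$, $|\Yb|$) and cutoff terms supported on $\{\varphi^{\prime}\neq 0\}$, $A(\rho/6,\rho/3)$, and $A(2\xi, 3\xi)$. Taking $\rad_{11}$ large enough that $Nr_c^{-2}$ is small on the support of $\psi$, $M$ large enough to absorb the $N\|\varphi\psi\nabla\Xb\,G_1^{1/2}\|^2$ contribution from the $\Yb$-equation into $M\|\nabla(\varphi\psi\Xb)G_1^{1/2}\|^2$, and $\alpha$ large enough to absorb the remaining lower-order interior terms, the inequality reduces to
\[
\alpha\|\varphi\psi(|\Xb|+|\Yb|)G_1^{1/2}\|_{L^2(\Ec_{\rad_0}^{\tau_0})}^2 \leq C\bigl(I_{\mathrm{time}} + I_{\mathrm{inner}} + I_{\mathrm{outer}}(\xi)\bigr).
\]
Choosing $\tau_0$ small enough so that the Gaussian decay from Claim \ref{cl:expdecay} beats the $e^{h^2}$ factor in $G_1$, one gets $I_{\mathrm{outer}}(\xi) \to 0$ as $\xi\to\infty$. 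A dyadic decomposition of $E_{\rho/6}$ into annuli of width $\sqrt{\tau_0}\rho_k$, combined with the same decay, shows $I_{\mathrm{time}}$ is finite and is dominated by its contribution on the innermost annulus, yielding
\[
I_{\mathrm{inner}} \leq C(\rho)\exp\!\bigl(\alpha\tau_0(2\rho/3)^{2-\delta} + (2\rho/3)^2\bigr),\quad
I_{\mathrm{time}} \leq C(\rho)\exp\!\bigl(\alpha(\tau_0/2)(\rho/3)^{2-\delta}+C^{\prime}\rho^2\bigr).
\]

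To extract vanishing, restrict the left-hand side to $\{r_c \geq 2\rho\} \times [0, \tau_0/4]$, on which $\varphi\psi \equiv 1$, $h \geq \rho$, and $\tau_0 - \tau \geq 3\tau_0/4$; there $G_1 \geq \exp(\alpha(3\tau_0/4)\rho^{2-\delta} + \rho^2)$. Since for every $\delta\in(0,1)$ one has
\[
\tfrac{3}{4} > \bigl(\tfrac{2}{3}\bigr)^{2-\delta} \qquad\text{and}\qquad \tfrac{3}{4} > \tfrac{1}{2}\bigl(\tfrac{1}{3}\bigr)^{2-\delta},
\]
the leading $\alpha$-exponent on the left strictly exceeds those bounding $I_{\mathrm{inner}}$ and $I_{\mathrm{time}}$. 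Dividing through by the LHS weight and letting $\alpha\to\infty$ forces
\[
\int_0^{\tau_0/4}\!\int_{E_{2\rho}}(|\Xb|^2+|\Yb|^2)\,d\mu\,d\tau = 0,
\]
so by continuity $\Xb\equiv\Yb\equiv 0$ on $E_{2\rho}\times[0,\tau_0/4]$. Setting $\rho=12\rad_{10}$, $\rad_{11}=24\rad_{10}$, and $\tau^{\prime}=\tau_0/4$ then yields the claim.

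The principal obstacle is the careful bookkeeping of the $\alpha$-dependent weight $G_1 = \exp(\alpha(\tau_0-\tau)h^{2-\delta}+h^2)$ across regions of very different character: the interior (where vanishing is to be established) must simultaneously have $(\tau_0-\tau)$ larger than on the temporal-cutoff strip $[\tau_0/2, 3\tau_0/4]$ and $h^{2-\delta}$ larger than on the inner-cutoff annulus $A(\rho/6, \rho/3)$, and both gaps must survive multiplication by the coefficient $\tau_0$ that governs the smallness forced by Claim \ref{cl:expdecay} to ensure convergence of the temporal-cutoff integral at spatial infinity. The geometry of the cutoffs precisely engineers the two arithmetic inequalities above; each would fail for neighboring placements, and keeping them compatible with the smallness of $\tau_0$ is the key quantitative balance in the argument.
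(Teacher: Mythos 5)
Your overall architecture --- apply the $G_1$-Carleman estimates of Proposition \ref{prop:systemcarlemanineq} to spatio-temporal cutoffs of $\Xb$ and $\Yb$, exploit the Gaussian decay from Claim \ref{cl:expdecay} to remove the outer spatial cutoff, and then send $\alpha\to\infty$ --- matches the paper's. The difference is how the $\tau_0$-boundary term in \eqref{eq:systemcarlemanineq1} is removed, and there is a genuine gap in your treatment.

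You introduce a temporal cutoff $\varphi$ with $\varphi'$ supported on $[\tau_0/2,3\tau_0/4]$ so that the boundary term vanishes, and claim that the resulting error $I_{\mathrm{time}}$, an $L^2$-integral of $\varphi'\psi(|\Xb|+|\Yb|)G_1^{1/2}$ over $E_{\rho/6}\times[\tau_0/2,3\tau_0/4]$, is dominated (after a dyadic decomposition) by its contribution on the innermost annulus and so obeys $I_{\mathrm{time}}\leq C(\rho)\exp(\alpha\tau_0(\rho/3)^{2-\delta}/2+C'\rho^2)$. This is false for $\alpha$ large. On $\{\varphi'\neq 0\}$ one has $\tau_0-\tau\geq\tau_0/4>0$, so $G_1=\exp(\alpha(\tau_0-\tau)h^{2-\delta}+h^2)\geq\exp(\alpha\tau_0 h^{2-\delta}/4+h^2)$, a weight whose $\alpha$-dependent factor grows with $h$. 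Against this, Claim \ref{cl:expdecay} gives decay of order $\exp(-r_c^2/(C\tau_0))\sim\exp(-h^2/(C'\tau_0))$. The product on an annulus $r_c\approx\rho_k$ thus carries an exponent of the form $c_1\alpha\tau_0\rho_k^{2-\delta}-(c_2/\tau_0-c_3)\rho_k^2$, whose maximum over $\rho_k\geq\rho/6$ is attained at $\rho_k^*\sim(\alpha\tau_0^2)^{1/\delta}$ (not at the innermost annulus once $\alpha$ exceeds a threshold depending on $\rho$ and $\tau_0$) and has value $\sim(\rho_k^*)^2\sim\alpha^{2/\delta}$. Since $2/\delta>1$ for $\delta\in(0,1)$, $I_{\mathrm{time}}$ grows superexponentially in $\alpha$, while the lower bound for $G_1$ that you use on $\{r_c\geq 2\rho\}\times[0,\tau_0/4]$ only produces an exponent linear in $\alpha$. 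Sending $\alpha\to\infty$ therefore does not force the left-hand side to vanish, and the argument breaks down.

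The paper sidesteps precisely this obstruction by not cutting off in time. Instead, it first integrates the Gaussian decay of Claim \ref{cl:expdecay} over dyadic annuli to show $\|(|\Xb|+|\nabla\Xb|)e^{4r_c^2}\|_{L^2(\Ec_{\rad_0}^{s_1})}\leq N$, then uses the mean value theorem to pick a slice $\tau^*\in(0,s_1)$ on which $\int_{E_{\rad_0}\times\{\tau^*\}}(|\Xb|+|\nabla\Xb|)^2e^{8r_c^2}\,d\mu\leq N$. Applying Proposition \ref{prop:systemcarlemanineq} with $\tau_0=\tau^*$ and only a spatial cutoff, the boundary term at $\tau=\tau^*$ involves $G_{1;\alpha,\tau^*}(\,\cdot\,,\tau^*)=e^{h^2}\leq e^{4r_c^2}$, which is $\alpha$-independent --- the $\alpha$-factor $\exp(\alpha(\tau^*-\tau)h^{2-\delta})$ degenerates to $1$ at $\tau=\tau^*$ --- so it is controlled uniformly in $\alpha$ by the slice bound just obtained. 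The only $\alpha$-dependence in the remaining error then lives on the fixed compact annulus $A(\rad,2\rad)\times[0,\tau^*]$, and the comparison of exponents (using a $\theta$ with $\theta^{2-\delta}>\rad^\delta$) lets $\alpha\to\infty$ close the argument. If you wish to salvage the temporal-cutoff approach, you must replace the claimed $I_{\mathrm{time}}$ bound with an $\alpha$-uniform one, which the weight $G_1$ does not permit on regions where $\tau_0-\tau$ is bounded below.
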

\begin{proof}  Below, we will continue to use $N$ to denote a series of constants which depend at most
on the parameters $n$, $A_0$, $K_0$, and $\rad_0$.  
We first show that the (space-time) $L^2$-norms of $\Xb$ and $\nabla \Xb$, weighted by 
$e^{8r_c^2}$, are finite on $\Ec_{\rad_0}^{s}$ for $s$ sufficiently small.
According to \eqref{eq:xydecay}, there is  $s_1 = s_1(n)$
such that
\[ 
\||\Xb| + |\nabla\Xb|\|_{L^2(A_{\rad, \sqrt{s}} \times[0, s])} 
  \leq e^{-16\rad^2},
\]
provided $\rad \geq 12\rad_{10}$ and $s\in [0, s_1]$. Here $A_{\rad, \epsilon} = E_{\rad - \epsilon}\setminus E_{\rad +\epsilon}$.
Thus, for any $\rad^{\prime\prime}\geq \rad^{\prime}\dfn 12\rad_{10}$, we have that
\begin{align*}
&\|(|\Xb| + |\nabla\Xb|) e^{4r_c^2}\|_{L^2(E_{\rad_0}\setminus E_{\rad^{\prime\prime}}\times[0, s_1])}\\
&\quad\leq \|(|\Xb|+|\nabla \Xb|) e^{4r_c^2}\|_{L^2(E_{\rad^{\prime}}\setminus E_{\rad^{\prime\prime}}\times[0, s_1])}
+ \|(|\Xb|+|\nabla\Xb|) e^{4r_c^2}\|_{L^2(E_{\rad_0}\setminus E_{\rad^{\prime}}\times[0, s_1])}\\
&\quad \leq \sum_{i=0}^{k^{\prime}}\|(|\Xb|+|\nabla\Xb|) e^{4r_c^2}\|_{L^2(A_{\rad^{\prime}+(2i+1)\sqrt{s_1}, \sqrt{s_1}}\times[0, s_1])}
    + N \|e^{4r_c^2}\|_{L^2(E_{\rad_0}\setminus E_{\rad^{\prime}}\times[0, s_1])}\\
&\quad \leq \sum_{i=0}^{k^{\prime}} N(\rad^{\prime} +2(i+1)\sqrt{s_1})^{\frac{n}{2}}e^{-12((\rad^{\prime})^2+i^2s_1)}
    + N
\end{align*}
where $k^{\prime} = \lceil(\rad^{\prime\prime}-\rad^{\prime})/(2\sqrt{s_1})\rceil$.
Sending $\rad^{\prime\prime}\to \infty$ it follows that
\begin{equation}\label{eq:xnablaxbounds}
   \|(|\Xb| + |\nabla \Xb|) e^{4r_c^2}\|_{L^2(\Ec_{\rad_0}^{s_1})} \leq N
\end{equation}
for some constant $N = N(n, A_0, K_0)$. 
In particular, by the mean value theorem, there is at least one $\tau^* \in (0, s_1)$ such that
\begin{equation}\label{eq:xnablaxspatialbounds}
  \int_{E_{\rad_0}\times\{\tau^*\}}\left(|\Xb| + |\nabla\Xb|\right)^2e^{8r_c^2}\,d\mu 
= \frac{1}{s_1}\|(|\Xb|+|\nabla \Xb|) e^{4r_c^2}\|_{L^2(\Ec_{\rad_0}^{s_1})}^2 \leq N.
\end{equation}

Now we are ready to apply our first Carleman estimate.  By \eqref{eq:modpdeode},
we can choose $\rad_{12} \geq \rad_0$ to ensure that
\begin{align}\label{eq:modpdeode2}
 \begin{split}
  &\left|\pd{\Xb}{\tau} + \Delta \Xb\right| \leq \frac{1}{100}\left(|\Xb| + |\Yb|\right),\\
  &\left|\pd{\Yb}{\tau}\right| \leq N(|\Xb| + |\nabla\Xb|) + \frac{1}{100}|\Yb|. 
 \end{split}
\end{align}
Next, as in Section \ref{sec:carleman1}, we take
\[
    G_1(x, \tau) \dfn G_{1; \alpha, \tau^*} = e^{\alpha(\tau^*-\tau)h^{2-\delta}(x, \tau) + h^2(x, \tau)} 
\]
for $\alpha \geq 0$ and $(x, \tau)\in \Ec_{\rad_0}^{\tau^*}$.  Observe that, by \eqref{eq:hrbounds}, we have $G_1(x, \tau^*) \leq e^{4r_c^2(x)}$
on $E_{\rad_0}$ and, generally, $G_1(x, \tau) \leq e^{8r_c^2(x)}$ on $\Ec_{\rad(\alpha)}^{\tau^*}$ for $\rad(\alpha)$ sufficiently large.  
Choose $\rad \geq \max\{12\rad_0, \rad_3, \rad_{12}\}$, and, for all $\xi > 4\rad$, let $\psi_{\rad, \xi}: E_{\rad_0}\to [0, 1]$ be a cutoff function, constructed as in Lemma \ref{lem:cutoff},
satisfying $\psi_{\rad, \xi} \equiv 1$ for $ 2\rad \leq r_c(x)\leq \xi$, $\psi_{\rad, \xi} \equiv 0$ for $r_c(x) <\rad$ and $r_c(x) > 2\xi$, and
$|\nabla \psi_{\rad, \xi}| + |\Delta \psi_{\rad, \xi}| \leq L(n, \rad, K_0)$.  Then $\Xb_{\rad, \xi} \dfn \psi_{\rad, \xi}\Xb$
and $\Yb_{\rad, \xi} \dfn \psi_{\rad, \xi}\Yb$ have compact support in $E_{\rad}$ for each $\tau\in[0, \tau^*]$, and so, by Proposition \ref{prop:systemcarlemanineq},
we have
\begin{align*}
\begin{split}
&\alpha^{\frac12}\|\Xb_{\rad, \xi}G_1^{\frac12}\|_{L^2(\Ec_{\rad}^{\tau^*})}
  + \|\nabla \Xb_{\rad, \xi} G_1^{\frac12}\|_{L^2(\Ec_{\rad}^{\tau^*})} 
+\|\Yb_{\rad, \xi} G_1^{\frac12}\|_{L^2(\Ec_{\rad}^{\tau^*})}
\\
  &\qquad\le 2\|(\partial_{\tau} +\Delta) \Xb_{\rad, \xi} G_1^{\frac12}\|_{L^2(\Ec_{\rad}^{\tau^*})}
   + 4\alpha^{-\frac12}\|\partial_{\tau}\Yb_{\rad, \xi}G_1^{\frac12}\|_{L^2(\Ec_\rad^{\tau^*})} \\
  &\qquad\phantom{\le}  + 2\|\nabla \Xb_{\rad, \xi} G_1^{\frac12}\|_{L^2(E_{\rad}\times\{\tau^*\})},
\end{split}
\end{align*}
for all $\alpha \geq 1$ where $G_1 = G_{1; \alpha, \tau^*}$.

On the other hand, by \eqref{eq:modpdeode2},
\begin{align*}
  \left|\left(\pdtau + \Delta\right)\Xb_{\rad, \xi}\right| 
    &\leq \frac{1}{100} \left(|\Xb_{\rad, \xi}| + |\Yb_{\rad, \xi}|\right) + 2|\nabla\psi_{\rad, \xi}||\nabla\Xb|
     + |\Delta \psi_{\rad, \xi}||\Xb|\\
  \left|\pdtau\Yb_{\rad, \xi}\right| &\leq N(|\Xb_{\rad, \xi}| + |\nabla\Xb_{\rad, \xi}|) + \frac{1}{100}|\Yb_{\rad, \xi}|
	  +N|\nabla\psi_{\rad, \xi}| |\Xb|
\end{align*}
on $\Ec_{\rad}^{\tau^*}$, so there exists $\alpha_4 = \alpha_4(n, A_0, K_0)$ such that 
\begin{align}\label{eq:xyineq1}
\begin{split}
&\|(|\Xb| + |\Yb|) G_1^{\frac12}\|_{L^2(A(2\rad, \xi)\times[0, \tau^*])}\\
  &\qquad\le N\|(|\Xb| + |\nabla \Xb|)G_1^{\frac12}\|_{L^2(A(\rad, 2\rad)\times[0, \tau^*])}\\
  &\qquad\phantom{\le} + N\|(|\Xb| + |\nabla \Xb|)G_1^{\frac12}\|_{L^2(A(\xi, 2\xi)\times[0, \tau^*])}\\
   &\qquad\phantom{\le} + N\|\Xb G_1^{\frac12}\|_{L^2(A(\rad, 2\rad)\times\{\tau^*\})}
     + N\|\Xb G_1^{\frac12}\|_{L^2(A(\xi, 2\xi)\times\{\tau^*\})}\\
   &\qquad\phantom{\le} +N\|\nabla \Xb G_1^{\frac12}\|_{L^2(A(\rad,2\xi)\times\{\tau^*\})}\\
\end{split}
\end{align}
for all $\alpha \geq \alpha_4$.  Now,
\[
  \|(|\Xb| + |\nabla \Xb|)G_1^{\frac12}\|_{L^2(A(\xi, 2\xi)\times[0, \tau^*])} 
\leq L^{\prime} \|(|\Xb| + |\nabla \Xb|)e^{4r_c^2}\|_{L^2(A(\xi, 2\xi)\times[0, \tau^*])}
\]
for some constant $L^{\prime} = L^{\prime}(\alpha, \delta)$, and since $G_{1; \alpha, \tau^*}(x, \tau^*) = e^{h^2(x, \tau^*)}$, we have
\begin{align*}
  &\|\Xb G^{\frac12}\|_{L^2(A(\xi, 2\xi)\times\{\tau^*\})} \leq \|\Xb e^{4r_c^2}\|_{L^2(A(\xi, 2\xi)\times\{\tau^*\})},
\quad\mbox{and}\\
&\|\nabla \Xb G_{1}^{\frac12}\|_{L^2(A(\rad,2\xi)\times\{\tau^*\})}
      \leq\|\nabla \Xb e^{4r_c^2}\|_{L^2(A(\rad, 2\xi)\times\{\tau^*\})},
\end{align*}
so, in view of \eqref{eq:xnablaxbounds} and \eqref{eq:xnablaxspatialbounds}, sending $\xi\to\infty$
in \eqref{eq:xyineq1}, we obtain
\begin{align*}
\begin{split}
&\|(|\Xb| + |\Yb|) G_1^{\frac12}\|_{L^2(E_{2\rad}\times[0, \tau^*])}
\le N\|(|\Xb| + |\nabla \Xb|)G_1^{\frac12}\|_{L^2(A(\rad, 2\rad)\times[0, \tau^*])}+ N.
\end{split}
\end{align*}
But then, for any $\theta \geq 1$, we have 
$G_1 \geq \exp{(16\alpha\tau^*(\theta\rad)^{2-\delta})}$ on $\Ec_{64\theta\rad}^{\tau^*/2}$, while we will have 
$G_1 \leq \exp{(16\rad^2(\alpha\tau^* + 1))}$ on $A(\rad, 2\rad)\times[0, \tau^*]$.
So, for all $\alpha \geq \alpha_4$, we have
\begin{align*}
\begin{split}
&\|(|\Xb| + |\Yb|)\|_{L^2(E_{64\theta\rad}\times[0, \frac{\tau^*}{2}])}\\
&\;\;\le Ne^{8\rad^{2}(1 + \alpha\tau^*(1 - \theta^{2-\delta}\rad^{-\delta}))}\|(|\Xb| + |\nabla \Xb|)\|_{L^2(A(\rad, 2\rad)\times[0, \tau^*])}
+ Ne^{-8\alpha\tau^*(\theta\rad)^{2-\delta}}.
\end{split}
\end{align*}
Choosing $\theta$ such that $\theta^{2-\delta} > \rad^{\delta}$, we can send $\alpha \to \infty$ to conclude at last that 
$\Xb$ and $\Yb$ must vanish identically on $\Ec_{64\theta\rad}^{\tau^*/2}$.
\end{proof}

\appendix

\section{Asymptotically conical metrics} \label{app:asymcone}

In this appendix, $(\Sigma, g_{\Sigma})$ and $(\hat{\Sigma}, g_{\hat{\Sigma}})$ will denote closed $(n-1)$-dimensional Riemannian 
manifolds and $g_c$ and $\hat{g}_c$  regular cones on $E_0 \dfn (0, \infty)\times \Sigma$ and $\hat{E}_0 \dfn 
(0, \infty)\times \hat{\Sigma}$, respectively. We will denote the associated dilation maps by 
$\rho_{\lambda}$ and $\hat{\rho}_{\lambda}$ and use $\mathcal{C} \dfn E_0 \cup \{O\}$ and 
$\hat{\mathcal{C}} \dfn \hat{E}_0 \cup \{\hat{O}\}$ to denote the (completed) metric cones
with vertices $O$ and $\hat{O}$ and metrics $d_{\mathcal{C}}$ and $d_{\hat{\mathcal{C}}}$.

\subsection{Some elementary consequences of Definition \ref{def:asymcone}.}

\begin{lemma}\label{lem:asymcone1}  
Let $(M, g)$ be a Riemannian manifold, $V$ an end of $M$, and $\Phi: E_a \to V$ a diffeomorphism
for some $a > 0$. For all $k = 0, 1, 2, \ldots$, define the proposition
\begin{equation}
\label{eq:ack}
 \tag{$AC_k$} 
 \lim_{\lambda\to\infty}
\lambda^{-2}\rho^*_{\lambda}\Phi^*g = g_c \quad\mbox{in}\quad C^k_{\mathrm{loc}}(E_0, g_c).
\end{equation}
Then 
\begin{enumerate}
 \item[(a)]
\eqref{eq:ack} holds if and only if
\[
	    \lim_{b\to\infty}b^l\|\nabla^{(l)}_{g_c}(\Phi^*g - g_c)\|_{C^0(E_b, g_c)} = 0
\]
for each $l = 0, 1, 2, \ldots, k$.
\item[(b)] If ($AC_0$) holds, then the metrics $\Phi^*g$ and $g_c$ are uniformly equivalent on $\overline{E_b}$ for any $b > a$,
and, for all $\epsilon > 0$, there exists $b > a$ such that, for $(r,\sigma)\in E_b$,
\begin{equation}
\label{eq:epsest}
(1-\epsilon)|r-b|\leq\bar{r}_b(r,\sigma)\le (1+\epsilon)|r-b|,
\end{equation}  
where $\bar{r}_b(x)\dfn \dist_{\Phi^\ast g}(x,\partial E_b)$.
\item[(c)] If ($AC_2$) holds, then for any $b > a$, there exists a constant $K = K(b, g_{\Sigma}) > 0$ such that
\begin{equation}
\label{eq:acquaddecay1}
     \sup_{x\in E_b}(\bar{r}_b^2(x) + 1)|\Rm(\Phi^\ast g)|_{\Phi^\ast g}(x) \leq K.
\end{equation}
\end{enumerate}
\end{lemma}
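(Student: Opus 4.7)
The proof will be built around the scaling properties of the dilation $\rho_\lambda$ relative to the cone metric. The underlying observation is $\rho_\lambda^* g_c = \lambda^2 g_c$. From this, for any $(0,k)$-tensor $T$ on $E_0$, one computes $|\rho_\lambda^* T|_{g_c}(x) = \lambda^k |T|_{g_c}(\rho_\lambda(x))$, and since the constant rescaling $g_c \mapsto \lambda^2 g_c$ preserves the Levi-Civita connection, $\nabla_{g_c}$ commutes with $\rho_\lambda^*$. These two observations together underlie everything that follows.

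For part (a), writing $h \dfn \Phi^*g - g_c$, combining the commutation and scaling identities yields
\begin{equation*}
|\nabla^{(l)}_{g_c}(\lambda^{-2}\rho_\lambda^*\Phi^*g - g_c)|_{g_c}(x) = \lambda^l\,|\nabla^{(l)}_{g_c} h|_{g_c}(\rho_\lambda(x)).
\end{equation*}
Uniform smallness of the left-hand side on a compact set $K \subset E_0$ as $\lambda \to \infty$ is then equivalent, via an annular covering argument using that $\rho_\lambda(\{1/2 \le r \le 2\}\times \Sigma)$ sweeps out the ends of $E_0$, to $r^l |\nabla^{(l)}_{g_c} h|_{g_c}(r,\sigma) \to 0$ uniformly as $r \to \infty$. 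The latter is easily seen to be equivalent to the stated decay condition since $b^l\sup_{E_b} |\nabla^{(l)}h|_{g_c} \leq \sup_{E_b} r^l |\nabla^{(l)}h|_{g_c}$ and conversely one recovers the pointwise decay from the sup-decay by comparing $r$ and $b$ on dyadic annuli.

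Part (b) will follow from (a) with $l=0$, which gives $|h|_{g_c}(x) \to 0$ as $r_c(x)\to\infty$, and hence uniform equivalence of $\Phi^*g$ and $g_c$ on $\overline{E_b}$ for $b$ large. For the distance estimate, the upper bound in \eqref{eq:epsest} will come from integrating $|\partial_r|_{\Phi^*g} = \sqrt{1+h_{rr}}$ along the radial segment joining $(r,\sigma)$ to $(b,\sigma) \in \partial E_b$, using $|h_{rr}| \le |h|_{g_c}$; the matching lower bound will follow by observing that any curve joining $(r,\sigma)$ to $\partial E_b$ must have its $r$-coordinate vary by at least $|r-b|$, combined with the dual bound $|dr|_{\Phi^*g} \le 1+\epsilon$ extracted from the same metric equivalence.

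Part (c) will rely on the $C^2_{\mathrm{loc}}$ convergence, which forces $\Rm(\lambda^{-2}\rho_\lambda^*\Phi^*g) \to \Rm(g_c)$ in $C^0_{\mathrm{loc}}$. Since the scalar $|\Rm|$ satisfies $|\Rm(c^2 g)|_{c^2 g} = c^{-2}|\Rm(g)|_g$ and is invariant under pullback, this convergence translates into
\begin{equation*}
\lambda^2\, |\Rm(\Phi^*g)|_{\Phi^*g}(\rho_\lambda(x)) \to |\Rm(g_c)|_{g_c}(x)
\end{equation*}
uniformly on compact subsets of $E_0$. Evaluating on $\{1\}\times\Sigma$ and using compactness of $\Sigma$ yields a uniform bound $r^2|\Rm(\Phi^*g)|_{\Phi^*g}(r,\sigma) \le C$ for $r$ large; together with the distance comparison $\bar{r}_b \le 2r$ from part (b) at infinity and the boundedness of both factors on the compact annulus where $r$ is close to $b$, this delivers \eqref{eq:acquaddecay1}. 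The only genuine technical care needed is in part (a), where the scaling bookkeeping for derivatives of arbitrary order must be tracked precisely; the remaining parts then follow as short consequences.
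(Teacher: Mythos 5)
Your proposal is correct and follows essentially the same line as the paper's proof: the scaling identity coming from $\rho_\lambda^*g_c=\lambda^2 g_c$ and the commutation of $\nabla_{g_c}$ with the homothety pullback gives part (a), the length-comparison of curves via the uniform metric equivalence gives part (b), and the rescaled convergence of $|\Rm|$ together with the distance comparison from (b) gives part (c). The only slight gloss is in (c), where getting $\bar{r}_b\le C(r+1)$ for an \emph{arbitrary} $b>a$ requires a triangle-inequality step through some large $b'$ (as the paper does explicitly), since (b) directly yields the estimate only for large $b$; but this is a routine detail and does not change the argument.
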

\begin{proof}
 The proof of (a) is a direct application of the identity
  \[
	\sup_{E_{b}\setminus E_{2b}}\left|\nabla_{g_c}^{(k)}\left(\lambda^{-2}\rho_{\lambda}^*\Phi^*g\right)\right |_{g_c} = 
	    \sup_{E_{\lambda b}\setminus E_{2\lambda b}}\lambda^{k}\left|\nabla_{g_c}^{(k)}\left(\Phi^*g\right)\right|_{g_c},
  \]
 valid for any $k$, $\lambda \geq 1$,  and $b > a$.  The uniform equivalence assertion in (b) follows immediately from (a).

To prove the estimate \eqref{eq:epsest} in (b), first we invoke (a) to obtain $b > a+1$ such that
\begin{equation}\label{eq:unifequiv}
	 (1-\epsilon)^2 g_c \leq \Phi^\ast g \leq (1+\epsilon)^{2}g_c
\end{equation}
on $\overline{E_{b-1}}$. Suppose $x=(r,\sigma)\in E_b$. Any curve $\gamma$ in $E_b$ joining $x$ to a point $y=(b,\hat{\sigma})\in\partial E_b$ 
will satisfy that
\[
   (1-\epsilon)\length_{g_c}[\gamma]\leq \length_{\Phi^\ast g}[\gamma] \leq (1+\epsilon)\length_{g_c}[\gamma], 
\]
and so it follows from $\dist_{g_c}(x,\partial E_b)=r-b$ that 
\[
(1-\epsilon)|r-b|\leq\bar{r}_b(x)\leq (1+\epsilon)|r-b|.
\]

Finally, for the curvature estimate in \eqref{eq:acquaddecay1}, fix any $b > a$ and 
note that, according to (a) and the uniform equivalence of $\Phi^\ast g$ and $g_c$ in $E_b$, we have
\[
 \sup_{(r, \sigma)\in E_b}(r^2 + 1) |\Rm(\Phi^\ast g)|_{\Phi^\ast g}(r, \sigma) \leq K, 
\]
for some $K$ depending on $b$ and the curvature of $g_{\Sigma}$.
On the other hand, by (\ref{eq:epsest}), there exists $b'>0$ (independent of $b$) such that for any $x = (r, \sigma) \in E_{b'}$, after possibly enlarging $K$,
\[
 \bar{r}_{b}(x) \leq \bar{r}_{b'}(x) + \operatorname{diam}_{\Phi^\ast g}(E_b\setminus E_{b'})
 \leq K(|r-b'|+ 1). 
\]
Thus, for a still larger $K$,
\[
 \sup_{x \in E_b}(\bar{r}_b^2(x)+1)|\Rm(\Phi^\ast g)|_{\Phi^\ast g}(x) \leq K,
\]
completing the proof.
\end{proof}

\subsection{Reparametrizing an asymptotically conical soliton}
In the next lemma, we will show that a shrinking soliton asymptotic to a cone along some end
admits a reparametrization on that end in which the level sets of the potential function
coincide with those of the radial coordinate.
We include the details since the ends we are working on are incomplete (complete with boundary), 
but we note that there are very precise estimates (see e.g., \cite{CaoZhou}) on the growth of $f$ on arbitrary complete shrinking 
gradient solitons. Given the quadratic decay of the curvature tensor, our situation is actually far simpler,
and an elementary argument 
in the spirit of the first portion of Lemma 1.2 of \cite{Perelman2} suffices. 
 \begin{lemma}\label{lem:fnormalization}
Suppose $(E_{\rad}, g, f)$ is a shrinking soliton satisfying
\begin{equation}\label{eq:fnormqc}
    (\bar{r}^2(x) + 1)|\Rm(g)| \leq K.
\end{equation}
for some $K$, where $\bar{r}(x) \dfn \dist_{g}(x, \partial E_{2\rad})$,
and $\lim_{r_i\to\infty}\bar{r}(r_i, \sigma_i)\to \infty$ for all sequences $(r_i, \sigma_i)\in E_{\rad}$
with $r_i\to\infty$ as $i\to\infty$.
Then there exists $\srad > 0$, a closed $(n-1)$-dimensional manifold $\bar{\Sigma}$, and a map
$\bar{\Phi}: \bar{E}_{\srad} \to E_{\rad}$,
where $\bar{E}_{\srad} \dfn (\srad, \infty)\times \bar{\Sigma}$, with the following
properties:
\begin{enumerate}
 \item[(1)] $\bar{\Phi}$ is a diffeomorphism onto its image, and $\bar{\Phi}(\bar{E}_{\srad})$ is an end of the closure of ${E}_{2\rad}$.
 \item[(2)] For all $(s, \bar{\sigma})\in \bar{E}_{\srad}$,
\[
   \bar{f}(s, \bar{\sigma}) = \frac{s^2}{4},\quad \mbox{and}\quad   \pd{\bar{\Phi}}{s} = 
  \bar{f}^{\frac12}\frac{\bar{\nabla}\bar{f}}{|\bar{\nabla}\bar{f}|^2_{\bar{g}}}.
\]
\item[(3)] There exists a constant $N > 0$ such that, for all $(s, \bar{\sigma}) \in \bar{E}_{2\srad}$,
\[
      N^{-1}(s - 1) \leq \bar{s}(s, \bar{\sigma}) \leq N(s+1)\quad\mbox{and}\quad
(s^2 + 1) |\Rm(\bar{g})|_{\bar{g}}(s, \bar{\sigma})\leq N.
\]
\end{enumerate}
Here, $\bar{f} \dfn f\circ\bar{\Phi}$, $\bar{g} \dfn \bar\Phi^*g$, and $\bar{s}(x) \dfn 
\dist_{\bar{g}}(x, \partial \bar{E}_{2\srad})$.
\end{lemma}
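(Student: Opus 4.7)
The plan is to straighten out the level sets of $f$ by integrating outward along the vector field
\[
 X \dfn \sqrt{f}\,\frac{\nabla f}{|\nabla f|^2},
\]
which satisfies $X(2\sqrt{f}) \equiv 1$; the formula in (2) then reads simply $\partial_s\bar\Phi = X\circ\bar\Phi$, and parametrizing by $s = 2\sqrt{f}$ makes $\bar f = s^2/4$ hold automatically.

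The crux is to arrange that $X$ is smooth and its forward flow is complete on a sufficiently distant end. Using $\Rc + \nabla\nabla f = g/2$ and \eqref{eq:fnormqc}, I would first choose $\rho$ large enough that $|\Rc| + |R| < 1/8$ on $\{\bar r \geq \rho\}$, so that $\nabla\nabla f \geq (3/8)g$ there. Along any unit-speed $g$-geodesic remaining in $\{\bar r \geq \rho\}$, $f$ is then strictly convex with $\ddot f \geq 3/8$; combined with the hypothesis that $\bar r\to\infty$ whenever $r\to\infty$, this forces $f \to +\infty$ as one exits to infinity along the end. The identity $|\nabla f|^2 = f - R$ then gives $|\nabla f|^2 \geq f/2$ once $f$ is large, so on $U \dfn \{f \geq s_0^2/4\}\cap\{\bar r\geq\rho\}$, for $s_0$ sufficiently large, the field $X$ is smooth with $|X|_g = \sqrt{f}/|\nabla f| \leq \sqrt{2}$.

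Set $\bar\Sigma \dfn \{f = s_0^2/4\}$, a smooth closed $(n-1)$-manifold by properness of $f$ and the nonvanishing of $|\nabla f|$. Letting $\phi_t$ denote the flow of $X$, the identity $2\sqrt{f\circ\phi_t(\bar\sigma)} = s_0+t$ shows trajectories cannot reenter $\{f \leq s_0^2/4\}$, and the uniform bound on $|X|_g$ together with $\bar r\to\infty$ keeps them away from $\partial E_{2\rad}$, so $\phi_t$ exists for all $t\geq 0$ from $\bar\Sigma$. I would then define $\bar\Phi(s,\bar\sigma) \dfn \phi_{s-s_0}(\bar\sigma)$; by construction (1) and (2) hold, $\bar\Phi$ is a local diffeomorphism (as $X$ is transverse to level sets of $f$), and injectivity is immediate since the value of $f$ determines $s$. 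For (3), the curvature bound follows from $\Rm(\bar g) = \bar\Phi^\ast\Rm(g)$ once one observes $\bar r\circ\bar\Phi$ is comparable to $s$, and the distance comparison follows from $|X|_{\bar g}\to 1$ at infinity together with the compactness of $\bar\Sigma$ and Lemma \ref{lem:asymcone1}(b).

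The main obstacle is the very first step: upgrading the pointwise decay of $\Rm$ to the global assertion that $f$ is proper on the end. Because $E_{\rad}$ is only complete with boundary, care is needed in choosing the geodesics along which to apply the Hessian convexity of $f$, and one must use the asymptotic-radius hypothesis $\bar r(r_i,\sigma_i)\to\infty$ to ensure that a curve going to infinity in the end eventually stays in the ``good'' region $\{\bar r\geq\rho\}$. Everything thereafter is a standard normal-form construction along $\nabla f$ followed by a scaling computation.
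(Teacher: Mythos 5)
Your proposal follows essentially the same route as the paper: establish properness and quadratic growth of $f$ from the Hessian bound $\nabla\nabla f = \tfrac12 g - \Rc$ and the quadratic curvature decay, take $\bar\Sigma$ to be a level set of $f$, and build the reparametrization by flowing along $X = \sqrt{f}\,\nabla f/|\nabla f|^2$ (equivalently, $\varphi_b$ followed by $u\mapsto s^2/4$). The construction in your second and third paragraphs is correct and matches the paper's. Two points, however, need correction.

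First, you do not address the connectedness of $\bar\Sigma$. The conclusion (1) requires $\bar\Phi(\bar E_\srad)$ to be an end, i.e.\ a \emph{connected} unbounded component, and in general a regular level set of a proper function on a manifold with boundary need not be connected. The paper settles this by noting that $\Sigma$ is connected, so the distant region $E_{\rad''}$ is connected and, by the growth estimate $f\approx \bar r^2/4$, it both sits inside a sublevel set $U_b$ and meets every component $(b,\infty)\times\bar\Sigma_i$ of $U_b$; hence $U_b$ (and $\bar\Sigma$) is connected. Some version of this argument must be included.

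Second, your justification of the distance comparison in (3) invokes Lemma~\ref{lem:asymcone1}(b), but that lemma is conditioned on the asymptotically conical hypothesis ($AC_0$), which is \emph{not} among the hypotheses of Lemma~\ref{lem:fnormalization} (the normalization lemma assumes only the quadratic curvature decay \eqref{eq:fnormqc} and the asymptotic-radius condition). Applying it here is circular: establishing that $\bar g$ looks conical in the $(s,\bar\sigma)$ coordinates is part of what is being organized. The correct — and simpler — argument is the one you already use implicitly for the curvature bound: from the double integration of $\tfrac12 - K'/(t^2+1) \le \tfrac{d^2}{dt^2}(f\circ\gamma) \le \tfrac12 + K'/(t^2+1)$ along minimizing geodesics from $\partial E_{2\rad}$, one gets
\[
 \frac{\bar r(x)^2}{4} - N'(\bar r(x)+1) \le f(x) \le \frac{\bar r(x)^2}{4} + N'(\bar r(x)+1),
\]
so $\bar f = s^2/4$ directly forces $s \sim \bar r\circ\bar\Phi$, and the distance $\bar s$ to $\partial\bar E_{2\srad}$ differs from $\bar r\circ\bar\Phi$ by the fixed (finite) distance between the compact sets $\partial E_{2\rad}$ and $\bar\Phi(\partial\bar E_{2\srad})$. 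No appeal to Lemma~\ref{lem:asymcone1} is needed.
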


\begin{proof}
 For any  $x\in E_{2\rad}$, if
$\gamma: [0, l] \to E_{2\rad}$ is a unit speed geodesic 
with $\gamma(0) = x_0 \in \partial E_{2\rad}$, $\gamma(l) = x$, and
 $l = \bar{r}(x)$ , then $\gamma([0, l])\subset \overline{E_{2\rad}}$
and,  by \eqref{eq:shrinker} and the assumed quadratic curvature decay, we have
\[
  \frac{1}{2} - \frac{K^{\prime}}{t^2+1} \leq \frac{d^2}{dt^2} (f\circ \gamma)(t) \leq \frac{1}{2} 
  + \frac{K^{\prime}}{t^2+1}
\]
for all $t \in [0, l]$ for some $K^{\prime} = K^{\prime}(n, K)$. So 
\begin{equation}\label{eq:frbarcomp}
       \frac{\bar{r}^2(x)}{4} - N^{\prime}(\bar{r}(x) + 1) \leq f(x) \leq \frac{\bar{r}^2(x)}{4} + N^{\prime}(\bar{r}(x) + 1)
\end{equation}
for some constant $N^{\prime}$ depending on $K^{\prime}$ and $\sup_{\partial E_{2\rad}}(|f| + |\nabla f|)$.  In particular, by 
the second equation in \eqref{eq:shrinker} (and the boundedness of $R$),
it follows that $f$ is proper and $\nabla f \neq 0$ 
on $\overline{E_{\rad^{\prime}}}$ for $\rad^{\prime} > 2\rad$ sufficiently large.

Let $U_{a} \dfn \{\,x\in E_{2\rad}\,|\, f > a \,\}$.  
Then there is $b$ such that $U_{b^{\prime}} \subset E_{\rad^{\prime}}$ for all $b^{\prime} \geq b$,
and a diffeomorphism $\varphi : (b, \infty) \times \bar\Sigma \to U_{b}$ 
for some smooth compact $(n-1)$-dimensional manifold $\bar\Sigma$
diffeomorphic to the level sets $\{\,x\in E_{2\rad}\,|\,f = b^{\prime}\,\}$ for $b^{\prime} \geq b$.  This 
diffeomorphism may be taken to satisfy
\[
 f(\varphi_b(u, \bar\sigma )) = u \quad\mbox{and}\quad \pd{\varphi_b}{ u} 
= \left(\frac{\nabla f}{|\nabla f|^2}\right)
  \circ\varphi_b.
\]
Observe that $\bar{\Sigma}$ must be connected since we assume $\Sigma$ to be.  For suppose $\bar\Sigma = \cup_{i=1}^m \bar\Sigma_i$
for disjoint closed $\bar\Sigma_i$.  Equation \eqref{eq:frbarcomp} implies that $E_{\rad^{\prime\prime}} \subset U_b$
for some $\rad^{\prime\prime} > 0$, and so $\varphi^{-1}_b(E_{\rad^{\prime\prime}})\subset (b, \infty) \times \bar\Sigma_{i_0}$
for some $i_0$, since $E_{\rad^{\prime\prime}}$ is connected.  But, again in view of \eqref{eq:frbarcomp}, 
$ E_{\rad^{\prime\prime}}\cap\varphi_b((b, \infty)\times \bar\Sigma_{i}) \neq \varnothing$ for all $i$.
Thus $\bar\Sigma = \bar\Sigma_{i_0}$ and is connected. 
Note also that $\overline{E_{2\rad}} \setminus U_b$ is a closed subset of 
$\overline{E_{3\rad/2}}\setminus E_{\rad^{\prime\prime}}$, hence compact.   So, taking $\srad \dfn 2\sqrt{b}$,
and defining $\bar{\Phi}:(\srad, \infty) \times \bar\Sigma \to E_{\rad}$ by 
$\bar{\Phi}(s, \bar{\sigma}) \dfn \varphi_b(s^2/4, \bar{\sigma})$, we obtain $\bar\Phi$ satisfying (1) and (2).

For the first inequality in (3), note that, for any $x = (s, \bar\sigma) \in \bar{E}_{\srad}$, it follows from
\eqref{eq:frbarcomp} that
\[
 N^{-1}(s - 1) \leq
  \bar{s}(x) = \dist_{g}(\bar\Phi(x), \partial U_{4b})\leq N(s + 1)
\]
for some $N$ depending on $N^{\prime}$ and $\dist_{g}(\partial E_{2\rad}, \partial U_{4b})$.  The second inequality
in (3) follows directly from \eqref{eq:fnormqc} and \eqref{eq:frbarcomp} after suitably enlarging $N$.
\end{proof}
\subsection{Uniqueness of asymptotically conical models}
Next we wish to determine conditions under which the two cones $(E_0, g_c)$ and $(\hat{E}_0, \hat{g}_c)$
must be isometric if $(M, g)$ is asymptotic to them both along some common end $V\subset M$. 
We will argue broadly as follows: if 
$g$ is asymptotic to $(E_0, g_c)$ and $(\hat{E}_0, \hat{g}_c)$ along $V$, then
 $g_c$ will be asymptotic to $\hat{g}_c$ along some end of $E_0$ in the sense of 
Definition \ref{def:asymcone}.  But then, the asymptotic cones of $(\mathcal{C}, d_{\mathcal{C}})$
and $(\hat{\mathcal{C}}, d_{\hat{\mathcal{C}}})$ (defined in the pointed
Gromov-Hausdorff sense) must be isometric, and these are separately isometric to the original cones.
The following lemma gives the precise (and somewhat more general) statement.

\begin{lemma}\label{lem:uniquenessofasymcones}
 Suppose that $\Phi:\hat{E}_{a_0}\to V$ is a diffeomorphism 
onto some end $V\subset (\overline{E_{b_0}},g)$, and
\[
\lambda^{-2}\rho_{\lambda}^\ast g \rightarrow g_c\quad \mbox{in}\quad C^0_{\mathrm{loc}}(E_{0}, g_c)
\quad\mbox{and}\quad \lambda^{-2}\hat{\rho}_{\lambda}^\ast \Phi^\ast g  \rightarrow \hat{g}_{c}  
\quad\mbox{in}\quad C^0_{\mathrm{loc}}(\hat{E}_{0},\hat{g}_{c})
\]
as $\lambda\to\infty$. Then  $(E_0, g_c)$ and  $(\hat{E}_0, \hat{g}_c)$ are isometric. 
\end{lemma}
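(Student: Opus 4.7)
The plan is to produce an isometry between $(\hat E_0, \hat g_c)$ and $(E_0, g_c)$ as the $C^0_{\mathrm{loc}}$-limit of a suitable rescaling of $\Phi$. The argument proceeds in three parts: (i) show that $\Phi^\ast g_c$ is itself asymptotic to $\hat g_c$ on $\hat E_{a_0}$; (ii) define $F_\lambda\dfn \rho_\lambda^{-1}\circ\Phi\circ\hat\rho_\lambda$ and extract a continuous limit $F\colon \hat E_0\to E_0$; and (iii) use the dilation equivariance of $F$ together with distance-preservation to identify $F$ with an isometry of the cones.

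For (i), I would decompose $\Phi^\ast g_c - \hat g_c = \Phi^\ast(g_c - g) + (\Phi^\ast g - \hat g_c)$. The second piece tends to zero in $C^0(\hat E_{\hat b},\hat g_c)$ as $\hat b\to\infty$ by the hypothesis and Lemma \ref{lem:asymcone1}(a). For the first, Lemma \ref{lem:asymcone1}(b), applied both to the identity inclusion $V\hookrightarrow E_0$ and to $\Phi$, makes $\hat g_c$, $\Phi^\ast g$, and hence $\Phi^\ast g_c$ pairwise uniformly equivalent on some $\hat E_{\hat b}$. This uniform equivalence forces $r(\Phi(\hat x))\to\infty$ whenever $\hat r(\hat x)\to\infty$ and bounds $|\Phi^\ast(g_c-g)|_{\hat g_c}(\hat x)$ by a constant multiple of $|g_c-g|_{g_c}(\Phi(\hat x))$, which vanishes by the first asymptotic hypothesis.

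For (ii), a direct computation yields $F_\lambda^\ast g_c = \lambda^{-2}\hat\rho_\lambda^\ast\Phi^\ast g_c$, so combining (i) with the other direction of Lemma \ref{lem:asymcone1}(a) gives $F_\lambda^\ast g_c\to\hat g_c$ in $C^0_{\mathrm{loc}}(\hat E_0,\hat g_c)$. On any compact $\hat K\subset \hat E_0$ this makes the $F_\lambda$ equi-bi-Lipschitz with images in a fixed compact subset of $E_0$, so an Arzel\`a--Ascoli argument produces a continuous subsequential limit $F$. The scaling identity $F_\lambda\circ\hat\rho_\mu = \rho_\mu\circ F_{\lambda\mu}$ descends in the limit to $F\circ\hat\rho_\mu = \rho_\mu\circ F$, forcing the form $F(\hat r,\hat\sigma) = (\hat r\,\phi(\hat\sigma),\psi(\hat\sigma))$ for some continuous $\phi\colon\hat\Sigma\to(0,\infty)$ and $\psi\colon\hat\Sigma\to\Sigma$.

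For (iii), the $C^0$-convergences $F_\lambda^\ast g_c\to\hat g_c$ and $F_\lambda\to F$ jointly imply that $F$ preserves intrinsic distances: $\dist_{g_c}(F(\hat x),F(\hat y)) = \dist_{\hat g_c}(\hat x,\hat y)$. Testing this along a radial ray, whose cone distance is just $|\hat r_1-\hat r_2|$, forces $\phi\equiv 1$. Running the same construction with $\Phi^{-1}$ in place of $\Phi$ yields an inverse limit $G$ satisfying $G\circ F = \operatorname{Id}$, so $\psi\colon \hat\Sigma\to\Sigma$ is a bijective distance-preserving map, hence a Riemannian isometry by Myers--Steenrod. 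Then $(\hat r,\hat\sigma)\mapsto (\hat r,\psi(\hat\sigma))$ is the desired cone isometry. The delicate point--and the main obstacle--is precisely this last step: promoting $C^0$-level information to a genuine smooth isometry without more regular convergence in hand. This is handled through the distance-preservation identity and the symmetric inverse-map construction, but one cannot simply differentiate $F_\lambda^\ast g_c\to\hat g_c$ directly, and care must be taken to rule out that $\psi$ is only a nontrivial Riemannian cover of $\Sigma$ by $\hat\Sigma$.
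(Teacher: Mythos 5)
Your plan shares parts (i) and the definition of $F_\lambda$ with the paper, but the core compactness step is different: you extract a genuine $C^0$-limit map $F$ via Arzel\`a--Ascoli and analyze it directly, whereas the paper packages $\Phi_{\lambda_i}$ as Gromov--Hausdorff approximations between the \emph{completed} pointed cones $(\hat{\mathcal C},d_{\hat{\mathcal C}},\hat O)$ and $(\mathcal C,d_{\mathcal C},O)$, invokes uniqueness of pointed GH limits to get a pointed metric isometry, and then upgrades this to a smooth isometry on $\hat E_0$ via Calabi--Hartman. These two routes are genuinely different in flavor (hard limit of maps vs. soft limit of metric spaces), and both ultimately rest on a distance-preservation$\Rightarrow$smoothness result (your Myers--Steenrod, their Calabi--Hartman).

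The real gap in your version is the assertion, offered without justification, that on each compact $\hat K\subset\hat E_0$ the images $F_\lambda(\hat K)$ remain in a fixed compact subset of $E_0$. Convergence $F_\lambda^\ast g_c\to\hat g_c$ in $C^0_{\mathrm{loc}}$ gives equi-bi-Lipschitz control and a uniform diameter bound on $F_\lambda(\hat K)$, but by itself does not anchor the location of the image: the metric pullback is dilation-invariant on a cone, so nothing so far prevents $F_\lambda(\hat K)$ from drifting toward the vertex or toward infinity (in one dimension, $F_\lambda(\hat r)=\hat r+c_\lambda$ with $c_\lambda\to\infty$ pulls back the flat metric to itself yet has no continuous limit). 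This anchoring is exactly what the paper's argument supplies: it shows, using the boundary-distance estimate $(1-\epsilon)|\hat r - a|\le \dist_{g_c}(\Phi(\hat r,\hat\sigma),\Phi(\partial\hat E_a))\le (1+\epsilon)|\hat r-a|$ rescaled by $\lambda$ together with $\dist_{\mathcal C}\bigl((\rho_{\lambda^{-1}}\circ\Phi)(\partial\hat E_a),\mathcal O\bigr)\le C\lambda^{-1}$, that $d_{\mathcal C}(\Phi_\lambda(\hat r,\hat\sigma),\mathcal O)/\hat r\to 1$ uniformly on $\hat E_{a'}$. Without this (or an equivalent) quantitative radial control, your Arzel\`a--Ascoli step does not go through.

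Two further, smaller points worth flagging. First, upgrading $F_\lambda\circ\hat\rho_\mu=\rho_\mu\circ F_{\lambda\mu}$ to $F\circ\hat\rho_\mu=\rho_\mu\circ F$ requires simultaneously extracting limits of $F_\lambda$ and of $F_{\lambda\mu}$ along a common subsequence for all $\mu$, which needs a diagonal argument (over a countable dense set of $\mu$) that should be spelled out. Second, your claim that $F$ preserves the intrinsic cone distance is delicate when the angular separation exceeds $\pi$, since the minimizing path then passes through the vertex, which lives outside the region $\hat E_0$ where the $C^0$-convergence is asserted; the paper sidesteps this by working with the completed pointed cones from the outset. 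Both of these can be repaired, but they are not free; the paper's GH formulation handles them automatically.
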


\begin{proof} By part (a) of Lemma \ref{lem:asymcone1}, we have
\begin{equation}\label{eq:cone1}
 \lim_{b\to\infty}\|g - g_c\|_{C^0(E_b, g_c)} = \lim_{a\to\infty}\|\Phi^*g - \hat{g}_c\|_{C^0(\hat{E}_a, \hat{g}_c)} = 0,
\end{equation}
and we claim that $\lim_{a\to\infty}\|\Phi^*g_c - \hat{g}_c\|_{C^0(\hat{E}_a, \hat{g}_c)} = 0$ also.  
By the first portion of Lemma \ref{lem:asymcone1} (b), there is a constant $N$ such that
\begin{align*}
  \|\Phi^*g_c - \hat{g}_c\|_{C^0(\hat{E}_a, \hat{g}_c)} 
&\leq  N\|\Phi^*(g_c - g)\|_{C^0(\hat{E}_a, \Phi^*g)}
+ \|\Phi^*g - \hat{g}_c\|_{C^0(\hat{E}_a, \hat{g}_c)}\\
&\leq N\|g_c - g\|_{C^0(\Phi(\hat{E}_a), g)}
+ \|\Phi^*g - \hat{g}_c\|_{C^0(\hat{E}_a, \hat{g}_c)},
\end{align*}
so in view of \eqref{eq:cone1}, we only need to verify that, for all $b > 0$, there exists $a$ sufficiently large
such that $\Phi(\hat{E}_{a}) \subset E_{b}$.  If not, then there exists $b_1 > b_0$ and a sequence of points 
$\hat{x}_j=(\hat{r}_j,\hat{\sigma}_j)\in \hat{E}_{2a_0}$ such that $\hat{r}_j\to\infty$ 
while $\Phi(\hat{x}_j)\in V\setminus E_{b_1}$ for all $j$. 
Then, by the second portion of Lemma \ref{lem:asymcone1} (b),
\[
\dist_g(\Phi(\hat{x}_j),\Phi(\partial\hat{E}_{2a_0}))=\dist_{\Phi^\ast g}(\hat{x}_j,\partial\hat{E}_{2a_0})\to\infty,
\]
as $j\to\infty$ which gives a contradiction.

In fact, by Lemma \ref{lem:asymcone1} (a), we know that $\lambda^{-2}\hat{\rho}_{\lambda}^*\Phi^*g_c \to \hat{g}_c$ 
in $C^0_{\mathrm{loc}}(\hat{E_{0}}, \hat{g}_c)$
as $\lambda\to\infty$, and this is equivalent to the assertion that $\Phi_{\lambda}^*g_c \to \hat{g}_c$ 
in $C^0_{\mathrm{loc}}(\hat{E_{0}}, \hat{g}_c)$
where $\Phi_{\lambda} \dfn \rho_{\lambda^{-1}} \circ\Phi \circ \hat{\rho}_{\lambda}$. 
We write $\Phi_{\lambda}(\hat{x}) = (r_{\lambda}(\hat{x}), \sigma_{\lambda}(\hat{x}))$
for $\hat{x}\in \hat{E}_{\lambda^{-1}a_0}$.

Now, applying the second assertion of Lemma \ref{lem:asymcone1} (b) to $\Phi^*g_c$ and $\hat{g}_c$ for some sufficiently 
large $b_2$, we claim that we have $E_{4b_2} \subset V = \Phi(\hat{E}_{a_0})$.
To see this, observe that, since $V$ is an end of $\overline{E_{b_0}}$,
$V$ is the unique unbounded connected component of  $\overline{E_{b_0}}\setminus \Omega$ for some compact $\Omega$.
Thus, there exists $b_2$ such that $E_{4b_2} \cap \Omega = \varnothing$, and since $E_{4b_2}$ is connected and unbounded,
we must have $E_{4b_2}\subset V$.

Next, observe that, by Lemma \ref{lem:asymcone1} (b), for all $\epsilon > 0$, there exists $a = a(\epsilon)>a_0$ such that whenever 
$\hat{x} = (\hat{r}, \hat{\sigma})\in\hat{E}_{a}$, the inequality
\[
(1-\epsilon)|\hat{r}-a| \leq\dist_{g_c}(\Phi(\hat{r},\hat{\sigma}), \Phi(\partial \hat{E}_a))\leq 
(1+\epsilon)\left|\hat{r}-a\right|
\]
holds.
Using that $\lambda^{-1} d_{g_c}(x, y) = d_{g_c}(\rho_{\lambda^{-1}}(x), \rho_{\lambda^{-1}}(y))$ for $\lambda > 0$, we then have
\[
 (1-\epsilon)\left|\hat{r}-\frac{a}{\lambda}\right|
\leq\dist_{g_c}(\Phi_{\lambda}(\hat{r},\hat{\sigma}), (\rho_{\lambda^{-1}}\circ\Phi)(\partial\hat{E}_{a}))\leq (1+\epsilon)\left|\hat{r}-\frac{a}{\lambda}\right|,
\]
for all $\lambda \geq 1$ and $(\hat{r}, \hat{\sigma})\in \hat{E}_{\lambda^{-1}a}$.
By the compactness of $\Phi(\partial\hat{E}_a)$, we have 
$d_{\mathcal{C}}((\rho_{\lambda^{-1}}\circ\Phi)(\partial\hat{E}_a), \mathcal{O}) \leq C\lambda^{-1}$
for some constant $C$ independent of $\lambda$,
and it follows that $r_{\lambda}(\hat{r}, \hat{\sigma})/ \hat{r} 
= d_{\mathcal{C}}(\Phi_{\lambda}(\hat{r}, \hat{\sigma}), \mathcal{O})/ \hat{r}$
converges uniformly to $1$ as $\lambda \to \infty$ on $\hat{E}_{a^{\prime}}$ for any fixed $a^{\prime} > 0$.
In particular, there is $a_1 > a_0$ and $\lambda_0 > 0$ such that $\Phi_{\lambda}(\hat{r}, \cdot) \in E_{\hat{r}/2}\setminus E_{2\hat{r}}$
whenever $\lambda\geq \lambda_0$ and $\hat{r}\geq a_1 /\lambda$.  

With this and the local uniform convergence of $\Phi_{\lambda}^*g_c$ to $\hat{g}_c$, we can then find a sequence
$\{\lambda_i\}_{i=1}^{\infty}$ such that $\Phi_{\lambda_i}(\hat{r}, \cdot) \in E_{\hat{r}/2}\setminus E_{2\hat{r}}$
and
\[
|d_{\hat{g}_c}(\hat{x}_1, \hat{x}_2) - d_{g_c}(\Phi_{\lambda_i}(\hat{x}_1), \Phi_{\lambda_i}(\hat{x}_2))| \leq \frac{N_0}{i} 
\]
on $\hat{E}_{1/4i}\setminus\hat{E}_{4i}$ for some $N_0$ depending only on the diameters
of $(\Sigma, g_{\Sigma})$ and $(\hat{\Sigma}, g_{\hat{\Sigma}})$.  We define a sequence of maps 
$F_i: (B_{\hat{\mathcal{C}}}(\hat{\mathcal{O}}, i), d_{\hat{\mathcal{C}}}) \to
(\mathcal{C}, d_{\mathcal{C}})$
by
\[
      F_i \dfn \left\{\begin{array}{ll}
			\Phi_{\lambda_i} &\mbox{on} \quad \hat{E}_{1/i}\setminus \hat{E}_{i}\\
			\mathcal{O} & \mbox{on} \quad \hat{\mathcal{C}} \setminus \hat{E}_{1/i}.
                          \end{array}\right.
\]
Using the $F_i$ in conjunction with the convergence of $r_{\lambda_i}(\hat{r}, \hat{\sigma})$
to $\hat{r}$ and the distance comparison above, the constant sequence 
$\{(\hat{\mathcal{C}}, d_{\hat{\mathcal{C}}}, \hat{O})\}_{i=1}^{\infty}$
can be seen to converge to $(\mathcal{C}, d_{\mathcal{C}}, O)$ in the pointed Gromov-Hausdorff sense, and it follows (see, e.g,
Theorem 8.1.7 in \cite{BuragoBuragoIvanov}) that there exists a pointed isometry 
$\varphi: (\hat{\mathcal{C}}, d_{\hat{\mathcal{C}}}, \hat{O})
\to (\mathcal{C}, d_{\mathcal{C}}, O)$.  The classical theorem of Calabi-Hartman \cite{CalabiHartman} then gives that the restriction
of $\varphi$ to $\hat{E}_0$ must in fact be a smooth isometry between $(\hat{E}_0, \hat{g}_c)$ and $(E_0, g_c)$.
\end{proof}

\section{Existence of rotationally symmetric shrinking ends}
\label{app:rotsymend}
In this appendix, we construct (incomplete) rotationally symmetric gradient Ricci solitons on topological
half-cylinders asymptotic to prescribed rotationally symmetric cones.  Our construction is based on the analysis
of a system of ODE which has been carefully treated, particularly in the steady and expanding cases, in the unpublished notes \cite{BryantSoliton} of Bryant;  
the argument we present below is heavily indebted to that reference.

Let $g_{S^{n-1}}$ be the standard round metric on the sphere $S^{n-1}$ of constant sectional curvature $1$ and, for $0<\rad<\widetilde{\rad}$, 
consider the warped product metric $g=dr^2+a(r)^2g_{S^{n-1}}$ 
on the annulus $A(\rad,\widetilde{\rad})\dfn (\rad,\widetilde{\rad})\times S^{n-1}$. The Ricci curvature tensor of $g$ is given by 
\begin{equation*}
\Rc(g)=-(n-1)\frac{a''}{a} dr^2+\left[(n-2)-aa''-(n-2)(a')^2\right]g_{S^{n-1}},
\end{equation*}
and the hessian of an arbitrary radial function $f = f(r)$ relative to $g$
has the form
\begin{equation*}
\nabla\nabla f=f'' dr^2+a a' f' g_{S^{n-1}} 
\end{equation*}
where the prime denotes differentiation with respect to $r$,
Thus, $(A(\rad,\widetilde{\rad}),g,f)$ satisfies \eqref{eq:shrinker} if and only if $a$ and $f$ satisfy the system
\begin{align}
\label{eq:SolitonA}
\left\{
\begin{array}{rl}
2f'' & = 1+2(n-1)\frac{a''}{a} \\
2a a' f' & = a^2-2\left[(n-2)-aa''-(n-2)(a')^2\right]
\end{array}
\right.
\end{align}
with $a(r) > 0$ for $r\in (\rad, \widetilde{\rad})$.

Given $\alpha\in (0,1) \cup (1,\infty)$, we seek to find solutions of the system (\ref{eq:SolitonA}) with $\rad > 0$ 
and $\widetilde{\rad} = \infty$ that satisfy $a(r) > 0$ and the asymptotic conditions
\begin{equation}\label{eq:asymcond} 
\frac{a(r)}{r} \to  \sqrt{\alpha},\quad\mbox{and}\quad \frac{4f(r)}{r^2} \to 1 \quad\mbox{as}\quad r\to\infty. 
\end{equation}
We will be working exclusively in the region where $a' > 0$, and there (following \cite{BryantSoliton})
it is convenient to change the radial 
coordinate from $r$ to $a(r)$.  In terms of $a$, $g$ assumes the form
\begin{equation*}
g=\frac{da^2}{w(a^2)}+a^2 g_{S^{n-1}},
\end{equation*}
where $a' (r)=\sqrt{w(a^2(r))}$, and (\ref{eq:SolitonA}) becomes
\begin{align}
\label{eq:Soliton1A}
\left\{
\begin{array}{rl}
1+2(n-1)w' & = 8s w f'' +4w f' +4sw' f' \\
4s w f'  &= s-2\left[(n-2)-sw'-(n-2)w\right], 
\end{array}
\right.
\end{align}
where $s=a^2$ and the prime now represents differentiation with respect to $s$.
We can now substitute the second equation in \eqref{eq:Soliton1A} into the first to eliminate $f$ and obtain
a single second-order equation for $w$.
\begin{equation}
\label{eq:MetricSolitonA}
4s^2ww''-\left[2sw'+s-2(n-2)\right]sw'+2(n-2)(1-w)w=0.
\end{equation}

\begin{proposition}
\label{prop:ExistenceA}
Given $\alpha\in (0,1)\cup (1,\infty)$ and $n\ge 2$, there exists $\srad=\srad(n,\alpha)>0$ and a positive solution $w$ of the equation (\ref{eq:MetricSolitonA}) 
on the interval $s>\srad$ such that $\lim_{s\to\infty}w(s)=\alpha$. In fact, $w$ has the asymptotic expansion 
\begin{equation*}
w(s)=\alpha-\frac{2(n-2)\alpha(1-\alpha)}{s}+\varphi(s)
\end{equation*}
where $\varphi(s)=O(s^{-2})$, $\varphi'(s)=O(s^{-3})$, and $\varphi''(s)=O(s^{-3})$. 
Furthermore, up to an additive constant, the function $f$ satisfies the expansion
\begin{equation*}
f(s)=\frac{s}{4\alpha}+\psi(s)
\end{equation*}
where $\psi(s)=O(s^{-1})$, $\psi'(s)=O(s^{-2})$ and $\psi''(s)=O(s^{-3})$.
\end{proposition}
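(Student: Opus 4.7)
The plan is to construct $w$ by a contraction mapping argument centered on a truncation of a formal asymptotic expansion, and then to derive the expansion of $f$ algebraically from the second equation of \eqref{eq:Soliton1A}.

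First I would carry out the formal asymptotics. Substituting $w=\alpha+c_1 s^{-1}+c_2 s^{-2}+\cdots$ into \eqref{eq:MetricSolitonA} and matching powers of $1/s$, the constant term forces $c_1=-2(n-2)\alpha(1-\alpha)$, which is nonzero precisely because $\alpha\notin\{0,1\}$; the subsequent coefficients are then determined recursively. Defining the one-term truncation
\[
W_0(s)\dfn \alpha-\frac{2(n-2)\alpha(1-\alpha)}{s},
\]
a direct computation shows that the residual $F(W_0)$ decays as $O(s^{-1})$ with a matching rate on its $s$-derivatives, where $F(w)\dfn 4s^2 ww''-[2sw'+s-2(n-2)]sw'+2(n-2)(1-w)w$.

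Next I would look for $w=W_0+\varphi$. The equation $F(W_0+\varphi)=0$ rearranges to $\mathcal{L}\varphi=-F(W_0)-\mathcal{N}(\varphi)$, where $\mathcal{L}$ is the linearization of $F$ at $W_0$ and $\mathcal{N}(\varphi)$ collects the terms at least quadratic in $\varphi$, $\varphi'$, $\varphi''$. The principal part of $\mathcal{L}$ at infinity is $4\alpha s^2\partial_s^2-s^2\partial_s$, whose homogeneous solutions behave asymptotically like $1+O(1/s)$ and $e^{s/(4\alpha)}(1+O(1/s))$. Variation of parameters, with the integration constants chosen so as to suppress the exponentially growing component, furnishes a right inverse $\mathcal{K}$ of $\mathcal{L}$ on functions decaying at infinity; integrating by parts using the detailed asymptotics of the two homogeneous solutions bounds $\mathcal{K}$ in the weighted $C^2$ norm
\[
\|\varphi\|_X\dfn \sup_{s>\srad}\bigl(s^2|\varphi|+s^3|\varphi'|+s^3|\varphi''|\bigr).
\]
One then verifies that $\mathcal{T}(\varphi)\dfn\mathcal{K}(-F(W_0)-\mathcal{N}(\varphi))$ contracts some ball in the corresponding Banach space into itself, provided $\srad=\srad(n,\alpha)$ is taken sufficiently large. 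The unique fixed point $\varphi$ satisfies $\varphi=O(s^{-2})$ and $\varphi',\varphi''=O(s^{-3})$, and positivity of $w=W_0+\varphi$ follows automatically after a further enlargement of $\srad$.

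The expansion of $f$ is then immediate from the algebraic relation
\[
f'(s)=\frac{1}{4sw}\bigl[s-2(n-2)+2sw'+2(n-2)w\bigr]
\]
obtained by solving the second equation of \eqref{eq:Soliton1A}: substituting the expansion just derived yields $f'(s)=1/(4\alpha)+O(s^{-2})$ with the corresponding rate on $f''$, and integrating produces $f(s)=s/(4\alpha)+\psi(s)$ with $\psi=O(s^{-1})$ after absorbing the integration constant. The main technical obstacle is the construction of $\mathcal{K}$ with the correct gain in decay for $\varphi'$ and $\varphi''$, not merely for $\varphi$: because the leading $4\alpha s^2\partial_s^2$ and $-s^2\partial_s$ terms of $\mathcal{L}$ share the same scaling regime on power-like data, the rates on the derivatives cannot simply be read off from the rate on $\varphi$ itself. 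A possible alternative, essentially the route taken in \cite{BryantSoliton}, is to set $\tau=1/s$, turning \eqref{eq:MetricSolitonA} into a second-order ODE with an irregular singular point at $\tau=0$, and appeal directly to the classical theory of ODE at such points to extract the one-parameter family of solutions possessing the prescribed asymptotic expansion.
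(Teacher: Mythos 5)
Your approach is genuinely different from the paper's and is plausible in outline, but it is not the route the paper takes. The paper constructs $w$ by a shooting-and-compactness argument: it solves the initial value problem for \eqref{eq:MetricSolitonA} backward from $s=S_0$ with $w(S_0)=\alpha$, $w'(S_0)=0$, uses a strong maximum principle to show $w_{S_0}$ is monotone on its existence interval (so $\alpha/2\le w_{S_0}\le 2\alpha$ on a subinterval with a uniform lower endpoint), derives a uniform bound $|w_{S_0}'(s)|\le Ns^{-2}$ by rewriting the ODE in the self-adjoint factored form \eqref{eq:InteFactorA} and integrating, then extracts a convergent subsequence as $S_0\to\infty$. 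The expansions of $w$ and $f$ are obtained \emph{a posteriori}, exactly as in your proposal, by observing that the correction $\varphi$ satisfies $\varphi''-\varphi'/(4\alpha)=Q$ with $Q=O(s^{-3})$ and solving the first-order equation for $\varphi'$ with the exponential integrating factor. Your approach — centering a contraction map on a truncation $W_0$ of the formal series and inverting the linearization $\mathcal{L}$ on weighted $C^2$ spaces — is a standard and legitimate alternative; it trades the paper's maximum-principle/monotonicity input for functional-analytic machinery, and it would more readily produce full asymptotic series if desired.

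Two remarks on the state of your writeup. First, you flag the construction of the right inverse $\mathcal{K}$ with the correct derivative gains as the main obstacle, but this is less serious than you suggest: dividing $\mathcal{L}\varphi=g$ by $4\alpha s^2$ and viewing the result as a first-order equation in $\varphi'$, the exponential integrating factor $e^{-s/(4\alpha)}$ localizes the Duhamel integral near $\sigma=s$ and hands you $\varphi'=O(s^{-3})$, $\varphi''=O(s^{-3})$ immediately whenever $g=O(s^{-1})$ — this is precisely the mechanism the paper itself uses to bound $w_{S_0}'$ (via \eqref{eq:InteFactorA}) and to pass from the equation for $\varphi$ to the decay of $\varphi'$. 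Second, as presented your argument is a sketch: the boundedness of $\mathcal{K}$ in the weighted norm, the contraction constant, and the a priori control of the quadratic terms $\mathcal{N}(\varphi)$ (e.g. $4s^2\varphi\varphi''$) are asserted but not estimated. These are routine once the integrating-factor computation is written out, but they would need to be carried through for a complete proof. The alternative you mention — passing to $\tau=1/s$ and invoking the theory of irregular singular points — is essentially Bryant's route and would also close the argument.
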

\begin{remark}
 The case $n=2$ was proven in Section 5 of \cite{Ramos}; see also \cite{BernsteinMettler}. 
\end{remark}
\begin{proof}
Through the rest of the proof, we fix $\alpha\in (0,1) \cup (1,\infty)$ and suppose $n\ge 3$. Our strategy is to seek to obtain solutions of (\ref{eq:MetricSolitonA}) 
with the desired asymptotic 
behavior as limits of sequences of solutions to \eqref{eq:MetricSolitonA} on finite intervals satisfying appropriate initial conditions.

Given $\srad_0>1$, the local theory of ODE implies that there is $\srad_1 \in [0, \srad_0)$ and a unique solution, $w_{\srad_0}(s)$,
of (\ref{eq:MetricSolitonA}) on $(\srad_1, \srad_0]$ with initial conditions $w_{\srad_0}(\srad_0)=\alpha$ and 
$w_{\srad_0}'(\srad_0)=0$, such that $(S_1, S_0]$ is the maximal subinterval of $(0, S_0]$ on which $w_{\srad_0}$ exists
and is positive.
Note that, if $0<\alpha<1$, $w_{\srad_0}''(\srad_0)<0$ by (\ref{eq:MetricSolitonA}) so $w_{\srad_0}$ 
is increasing in some interval $(\srad_0-\delta,\srad_0)$. Moreover, by the strong maximum principle, 
there are no local minimum points in the strip $\{0<w_{\srad_0}(s)<1,\, s>0\}$. Thus $w_{\srad_0}$ is increasing in the interval $(\srad_1,\srad_0)$ for $\alpha\in (0,1)$.  A similar argument gives that $w_{\srad_0}$ is decreasing in the interval $(\srad_1,\srad_0)$ for $\alpha\in (1,\infty)$.

Next we define $\srad_2\dfn\inf\{s\in(\srad_1,\srad_0): \alpha/2\le w_{\srad_0}(s)\le 2\alpha\}$.
It follows from the monotonicity of $w_{\srad_0}$ that $\alpha/2\le w_{\srad_0}(s)\le 2\alpha$ for $\srad_2<s\le\srad_0$,
and so the equation (\ref{eq:MetricSolitonA}) implies that
\begin{equation}
\label{eq:InteFactorA}
\begin{split}
& \frac{d}{ds}\left\{\exp\left(-\int^s \frac{w_{\srad_0}'(\rho)}{2w_{\srad_0}(\rho)}+
\frac{1}{4w_{\srad_0}(\rho)}-\frac{n-2}{2\rho w_{\srad_0}(\rho)}d\rho\right) w_{\srad_0}'(s)\right\}\\
= & \frac{n-2}{2s^2} (w_{\srad_0}(s)-1)\exp\left(-\int^s \frac{w_{\srad_0}'(\rho)}{2w_{\srad_0}(\rho)}+
\frac{1}{4w_{\srad_0}(\rho)}-\frac{n-2}{2\rho w_{\srad_0}(\rho)}d\rho\right)
\end{split}
\end{equation}
on $(\srad_2, \srad_0)$
Assume that $\srad_0>4(n-2)$. Integrating (\ref{eq:InteFactorA}) with respect to $s$, we have that if $\max\{4(n-2),\srad_2\}<s<\srad_0$, 
then
\begin{align*}
 \abs{w_{\srad_0}'(s)}
\le  & \int_{s}^{\srad_0} \!\frac{n-2}{2\sigma^2}\sqrt{\frac{w_{\srad_0}(s)}{w_{\srad_0}(\sigma)}} \abs{1-w_{\srad_0}(\sigma)}
\exp\left(-\int^\sigma_s \left(\frac{1}{4}-\frac{n-2}{2\rho}\right)\frac{d\rho}{w_{\srad_0}(\rho)}\right)d\sigma\\
\le & \frac{(n-2)(1+2\alpha)}{s^2}\int_s^{\srad_0} \exp\left(-\frac{\sigma-s}{16\alpha}\right)d\sigma.
\end{align*}
Hence there exists $N\dfn N(n,\alpha)>0$ such that
\begin{equation}
\label{eq:1stderivativeA}
\abs{w_{\srad_0}'(s)}\le Ns^{-2} \  \text{for} \  \max\{4(n-2),\srad_2\}<s\le\srad_0.
\end{equation} 

Furthermore, we may obtain a uniform upper bound for $\srad_2$ independent of $\srad_0$: if $\srad_2\ge 4(n-2)$, 
integrating (\ref{eq:1stderivativeA}) from $\srad_2$ to $\srad_0$ implies that
\begin{equation*}
\abs{w_{\srad_0}(\srad_0)-w_{\srad_0}(\srad_2)} \le N\left(\frac{1}{\srad_2}-\frac{1}{\srad_0}\right).
\end{equation*}
Note that either $w_{\srad_0}(\srad_2)=\alpha/2$ if $\alpha\in (0,1)$ or $w_{\srad_0}(\srad_2)=2\alpha$ if $\alpha\in (1,\infty)$. 
Otherwise, we have $\srad_1 = \srad_2 \ge 4(n-2) > 0$, which, in view of \eqref{eq:1stderivativeA}, violates the maximality of the interval $(\srad_1, \srad_0)$. 
Since $w_{\srad_0}(\srad_0)=\alpha$, we see that either way we have $\srad_2\le 2N/\alpha$, and hence that $\srad_2\le\min\{4(n-2), 2N/\alpha\}$.
Thus, letting $\srad_0\to\infty$, we obtain subsequential convergence of $w_{\srad_0}$ to a positive solution $w$ of (\ref{eq:MetricSolitonA}) 
in $(\srad,\infty)$ satisfying 
\[
\lim_{s\to\infty}w(s)=\alpha,\quad w(s)- \alpha =O(s^{-1}),\quad w'(s)=O(s^{-2}),\quad \mbox{and}\quad w''(s)=O(s^{-2}). 
\]
Here $\srad$ is defined to be $\min\{4(n-2),2N/\alpha\}$.  

Next, define a function $\varphi(s)$ by
\begin{equation*}
w(s)=\alpha-\frac{2(n-2)\alpha(1-\alpha)}{s}+\varphi(s).
\end{equation*}
The second term in this equation is chosen after formally expanding $w(s)$ in a power series in terms of $s^{-k}$ and solving for the coefficient of the $s^{-1}$ term. 
On one hand, the asymptotics of $w$ imply that $\varphi(s)=O(s^{-1})$, 
$\varphi'(s)=O(s^{-2})$, and $\varphi''(s)=O(s^{-2})$. On the other hand, using (\ref{eq:MetricSolitonA}), we see that the function $\varphi$ satisfies the equation
\begin{equation*}
\varphi''(s)-\frac{\varphi'(s)}{4\alpha}=Q(s),
\end{equation*}
where $Q(s)=O(s^{-3})$. 
Thus we have
 \begin{equation*}
 \varphi'(s)=e^{\frac{s}{4\alpha}}\int_s^\infty Q(\sigma)e^{-\frac{\sigma}{4\alpha}}d\sigma,
 \end{equation*}
 so $\varphi'(s)=O(s^{-3})$, and hence also  $\varphi(s)=O(s^{-2})$ and $\varphi''(s)=O(s^{-3})$.

It remains to derive the asymptotic expansion of the function $f$.  From the second equation in (\ref{eq:Soliton1A}), we have
\begin{align*}
\frac{d}{ds}\left(f(s)-\frac{s}{4\alpha}\right) w(s) = & \frac{\alpha-w(s)}{4\alpha}-\frac{1}{2s}\left[(n-2)(1-w(s))-sw'(s)\right]\\
= & \frac{(n-2)(n-1)\alpha(1-\alpha)}{s^2}+\frac{(n-2)\varphi(s)}{2s}-\frac{\varphi(s)}{4\alpha}+\frac{\varphi'(s)}{2}\\
\dfn & \psi'(s)w(s)
\end{align*}
Since $\alpha/2\le w(s)\le 2\alpha$ for $s>\srad$, it follows that $\psi'(s)=O(s^{-2})$ and $\psi''(s)=O(s^{-3})$. Moreover, we may assume that $\lim_{s\to\infty}\psi(s)=0$ and 
so achieve that $\psi(s)=O(s^{-1})$. 
\end{proof}
Again invoking the results of \cite{BernsteinMettler} and \cite{Ramos}  for the case $n=2$, Proposition \ref{prop:ExistenceA} can be restated 
to yield the following existence theorem.
  
  \begin{proposition}
  \label{prop:Existence1A}
  For each $\alpha\in (0,1) \cup (1,\infty)$ and $n\ge 2$, there exists an rotationally symmetric shrinking gradient Ricci soliton asymptotic to the rotationally symmetric cone 
 $((0,\infty)\times S^{n-1}, dr^2+\alpha r^2 g_{S^{n-1}})$ in the sense of Definition \ref{def:asymcone}.
  \end{proposition}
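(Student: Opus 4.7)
The plan is to translate the ODE solution from Proposition \ref{prop:ExistenceA} back into geometric data. Fix $\alpha \in (0,1) \cup (1,\infty)$ and let $w$ be the positive solution of \eqref{eq:MetricSolitonA} on $(\srad, \infty)$ furnished by that proposition, with $w(s) = \alpha + O(s^{-1})$, $w'(s) = O(s^{-2})$, and $w''(s) = O(s^{-3})$. Since $w$ is bounded away from zero for large $s$, I define $a \mapsto r(a)$ by
\[
r(a) \dfn \int_{\sqrt{\srad}+1}^{a} \frac{d\tilde{a}}{\sqrt{w(\tilde{a}^{2})}},
\]
which is smooth and strictly increasing, mapping $(\sqrt{\srad}+1, \infty)$ onto some $(\rad, \infty)$. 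Let $a = a(r)$ denote its smooth inverse; then $a'(r) = \sqrt{w(a(r)^{2})}$ and hence $a''(r) = a(r) \, w'(a(r)^{2})$.

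Next, I would recover the potential on the end $V \dfn (\rad, \infty) \times S^{n-1}$ by the correspondence $s = a(r)^{2}$ and the function $f(s)$ produced by Proposition \ref{prop:ExistenceA}; equivalently, one can read $f'(r)$ directly from the second equation of \eqref{eq:SolitonA}. By the derivation of \eqref{eq:SolitonA}--\eqref{eq:MetricSolitonA} from \eqref{eq:shrinker}, the warped-product metric $g \dfn dr^{2} + a(r)^{2}\, g_{S^{n-1}}$ together with this $f$ satisfies $\Rc(g) + \nabla\nabla f = \tfrac{1}{2}\, g$ on $V$. Adjusting $f$ by an additive constant, as permitted by the remark following \eqref{eq:shrinker}, then enforces $R + |\nabla f|^{2} = f$, yielding a genuine rotationally symmetric shrinking gradient Ricci soliton structure on $V$.

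Finally, I would verify the asymptotic cone condition of Definition \ref{def:asymcone}. Taking $\Sigma = S^{n-1}$ and the identity map as the diffeomorphism $\Phi : E_{\rad} \to V$, we have
\[
\lambda^{-2}\rho_{\lambda}^{*} g = dr^{2} + \lambda^{-2} a(\lambda r)^{2}\, g_{S^{n-1}},
\]
so it suffices to show $\lambda^{-2} a(\lambda r)^{2} \to \alpha r^{2}$ in $C^{2}_{\operatorname{loc}}((0,\infty))$. The zeroth-order statement follows from $a(r)/r \to \sqrt{\alpha}$, itself a consequence of $w(s) \to \alpha$ via the integral formula above. For the first derivative,
\[
\tfrac{d}{dr}\bigl[\lambda^{-2} a(\lambda r)^{2}\bigr] = 2\lambda^{-1} a(\lambda r)\sqrt{w(a(\lambda r)^{2})} \to 2\sqrt{\alpha}\, r \cdot \sqrt{\alpha} = 2\alpha r,
\]
and for the second, using $a'' = a\, w'(a^{2})$,
\[
\tfrac{d^{2}}{dr^{2}}\bigl[\lambda^{-2} a(\lambda r)^{2}\bigr] = 2 w(a(\lambda r)^{2}) + 2 a(\lambda r)^{2}\, w'(a(\lambda r)^{2}) \to 2\alpha,
\]
both convergences uniform on compact subsets of $(0,\infty)$ thanks to the decay $w'(s) = O(s^{-2})$.

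The real technical content is absorbed into Proposition \ref{prop:ExistenceA}, so the obstacle here is mild: one needs to confirm that the asymptotic expansions of $w$ are sharp enough to give $C^{2}_{\operatorname{loc}}$ convergence (rather than merely $C^{0}$) of the rescaled metrics. This is precisely what the derivative estimates $w'(s) = O(s^{-2})$ and $w''(s) = O(s^{-3})$ in Proposition \ref{prop:ExistenceA} are designed to provide, and the higher $C^{k}_{\operatorname{loc}}$ convergence needed for the applications in the main text would follow by iterating the same scheme once one observes that $\partial_{r}^{k}[\lambda^{-2} a(\lambda r)^{2}]$ is a polynomial in $w, w', \ldots, w^{(k-1)}$ evaluated at $a(\lambda r)^{2}$.
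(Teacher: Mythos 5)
Your argument is correct and is essentially the (implicit) argument the paper has in mind; the paper merely states that Proposition \ref{prop:ExistenceA} ``can be restated'' to give Proposition \ref{prop:Existence1A}, and you have supplied the routine translation from the ODE data $(w,f)$ back to the geometric data $(g,f)$ and checked the $C^2_{\mathrm{loc}}$ convergence of the rescaled metrics. One small inaccuracy in your closing parenthetical: $\partial_r^k\bigl[\lambda^{-2}a(\lambda r)^2\bigr]$ is not simply a polynomial in $w,\dots,w^{(k-1)}$ evaluated at $a(\lambda r)^2$ --- it also involves $a(\lambda r)$, $a'(\lambda r)$, and explicit powers of $\lambda$ --- but after substituting $a'=\sqrt{w(a^2)}$ and using the decay $w^{(j)}(s)=O(s^{-j-1})$ guaranteed by Proposition \ref{prop:ExistenceA}, those extra factors of $\lambda$ are cancelled and the higher-order convergence does go through; none of this affects the $C^2$ verification that Definition \ref{def:asymcone} actually requires.
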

 By Theorem \ref{thm:unique}, the maximal extensions of the metrics constructed above are the unique rotationally symmetric examples asymptotic
to the given  cone, however,
  according to the classification in \cite{KotschwarRotationalSoliton}, 
   none of these extensions yield  complete metrics on $\RR^n$ or $\RR\times S^{n-1}$. 
\begin{acknowledgement*} The authors wish to thank Ben Chow, Toby Colding, Bill Minicozzi, and Lei Ni 
for their support and for sharing some of their intuition, and also Huai-Dong Cao for his comments on a preliminary draft
of the paper. The second author would also like to thank FIM/ETH for their hospitality during her visit in December 2012  while this project was underway. 
\end{acknowledgement*}

\end{document}